\documentclass{amsart}
\usepackage{amsfonts,amssymb,amsmath,amsthm}
\usepackage{url}
\usepackage{enumerate}
\usepackage{amsmath,amsfonts,amssymb}
\urlstyle{sf}
\newtheorem{theorem}{Theorem}[section]
\newtheorem{lemma}[theorem]{Lemma}
\newtheorem{definition}[theorem]{Definition}
\newtheorem{prop}[theorem]{Proposition}
\newtheorem{cor}[theorem]{Corollary}

\theoremstyle{definition}

\newtheorem{remark}[theorem]{Remark}
\numberwithin{equation}{section}


\setlength{\textheight}{24.5cm}
\setlength{\textwidth}{17.5cm}
\setlength{\topmargin}{-1.0cm}

\oddsidemargin  = -0.5 cm
\evensidemargin = -0.5 cm

\font\german=eufm10
\def\a{{\hbox{\german a}}}
\def\g{{\hbox{\german g}}}

\def\k{{\hbox{\german k}}}

\def\p{{\hbox{\german p}}}
\def\so{{\hbox{\german so}}}

\def\R{{\bf R}}
\def\C{{\bf C}}
\def\F{{\bf F}}
\def\H{{\bf H}}
\def\O{{\bf O}}
\def\SO{{\bf SO}}
\def\SU{{\bf SU}}
\def\Sp{{\bf Sp}}
\def\SL{{\bf SL}}

\def\Z{\hbox{$\mathcal{Z}$}}

\def\diag{\mathop{\hbox{diag}}}
\def\Ad{\mathop{\hbox{Ad}}}
\def\ad{\mathop{\hbox{ad}}}
\author{P.\ Graczyk}
\address{
Laboratoire de Math\'ematiques\\
LAREMA\\
Universit\'e d'Angers\\
49045 Angers cedex 01}
\email{piotr.graczyk@univ-angers.fr}
\thanks{Supported by l'Agence Nationale de la Recherche
ANR-09-BLAN-0084-01}
\author{P.\ Sawyer}
\address{
Department of Mathematics and Computer Science\\
Laurentian University\\
Sudbury, Ontario}
\email{psawyer@laurentian.ca}
\keywords{root systems, spehrical functions, symmetric spaces, non-compact type}
\subjclass{Primary 43A90, Secondary 53C35}

\begin{document}

\title[Orbital measures on spaces  of type $C_p$ and $D_p$ ]{ Convolution of orbital measures on symmetric spaces\\ of type $C_p$ and $D_p$}

\begin{abstract}
We study the absolute continuity of the convolution  $\delta_{e^X}^\natural \star\delta_{e^Y}^\natural$
of two orbital measures on  the symmetric spaces ${\bf SO}_0(p,p)/{\bf SO}(p)\times{\bf SO}(p)$,
$\SU(p,p)/{\bf S}({\bf U}(p)\times{\bf U}(p))$ and $\Sp(p,p)/{\bf Sp }(p)\times\Sp(p)$.
We prove sharp conditions on $X$, $Y\in\a$ for the existence of the density of the convolution measure. This measure
intervenes in the product formula for the spherical functions.

\end{abstract}

\maketitle

\section{Introduction} 

The spaces  $G/K={\bf SO}_0(p,p)/{\bf
SO}(p)\times{\bf SO}(p)$ are Riemannian symmetric spaces of non-compact type corresponding
to root systems of type $D_p$.  The spaces  $\SU(p,p)/{\bf S}({\bf U}(p)\times{\bf U}(p))$ and 
$\Sp(p,p)/\Sp(p)\times\Sp(p)$ correspond to root systems of type $C_p$. 

Consider  $X$, $Y\in \a$ and let $m_K$ denote the Haar measure of the group $K$. We define
$\delta_{e^X}^\natural=m_K\star \delta_{e^X} \star m_K$. It is the uniform measure on the orbit
$Ke^XK$. The problem of the absolute continuity of the convolution
\begin{align*}
m_{X,Y}=\delta_{e^X}^\natural \star\delta_{e^Y}^\natural
\end{align*}
of two orbital measures that we address in our paper
has  important applications in harmonic analysis of spherical functions  on $G/K$ and  in probability theory.
Let   $\lambda$ be a complex-valued linear form on $\a$ and
$\phi_\lambda (e^X)$ be   the spherical function,  which is 
the spherical Fourier transform of the measure $\delta_{e^X}^\natural$.
The product formula for the spherical functions states that
\begin{align*}
\phi_\lambda (e^X)\,\phi_\lambda (e^Y) =\int_{\a}\,\phi_\lambda (e^H)\,d\mu_{X,Y} (H)
\end{align*}
where $\mu_{X,Y}$ is the projection of the measure $m_{X,Y}$ on $\a$ via the Cartan decomposition $G=KAK$.
Then the existence of a density of $\mu_{X,Y}$, equivalent to the absolute continuity of $m_{X,Y}$,
is of great importance.

It was proven in \cite{PGPS2} that as soon as the space $G/K$ is irreducible and one of the elements $X$, $Y$ is regular and the 
other nonzero, then the convolution  $\delta_{e^X}^\natural \star\delta_{e^Y}^\natural$ has a density. The density can however still exist
when both $X$ and $Y$ are singular.  It is a challenging problem to characterize all such pairs $X$, $Y$. 

This problem was solved  for symmetric spaces   of type $A_n$ and for the exceptional space $\SL(3,\O)/\SU(3,\O)$ of type $E_6$ in 
\cite{PGPS_Lie2010},   and  for symmetric spaces of type $B_p$ and $BC_p$ in \cite{PGPS_2013}.  
In the present paper, we present the solution of the problem for Riemannian symmetric spaces of type $C_p$ and $D_p$. 

For the good comprehension of the methods of this paper, it is useful to know the paper \cite{PGPS_2013}. 
However, the cases $C_p$ and $D_p$ require many original ideas that did not appear in the case $B_p$.
We refer to S. Helgason's books \cite{Helga0} and \cite{Helga} for the standard notation
and results. 

In Section \ref{root}, we are reminding the reader of the basic information about the Lie group $\SO(p,p)$ and its Lie algebra $\so(p,p)$.  We also provide the necessary notation to describe the configuration of an element of the Cartan subalgebra $\a$ of $\so(p,p)$.  This configuration notion allows us to ``measure'' how singular an element of $\a$ is and to describe in a precise manner which pairs $X$, $Y\in\a$ are ``eligible'', the sharp criterion that we establish in the paper for the absolute continuity of $\mu_{X,Y}$. The following theorem is the main result of the paper:\\[1mm]
{\bf Theorem A.} {\it Let $G/K={\bf SO}_0(p,p)/{\bf
SO}(p)\times{\bf SO}(p)$ and $X$, $Y\in\a$. The density of the convolution $\delta_{e^X}^\natural \star
\delta_{e^Y}^\natural$ exists if and only if $X$ and $Y$ are eligible (see Definition  \ref{defEligible}). 
}\\

In the following Section \ref{tools}, a series of definitions and accessory results are given to set the stage for the proof of Theorem A.
In Section \ref{NSSE}, we show that $(X,Y)$ has to be an  eligible pair for the measure $\mu_{X,Y}$ to be absolutely continuous. 
In Section \ref{Suff}, we then show that the eligibility condition is also sufficient.  

In the last section, as in \cite{PGPS_Lie2010} and \cite{PGPS_2013}, we extend our results to the complex and quaternionic cases.  
Again, the richness of the root structure comes into play: in the table in Remark \ref{complex}, we find that the 
complex and quaternionic cases have much more in common with the cases $q>p$ than with the real case $\SO(p,p)$.

We conclude the paper with a discussion of the absolute continuity of powers of $\delta_{e^X}^\natural$ for a  nonzero $X\in\a$.
\section{Preliminaries and definitions}\label{root}
We start by reviewing some useful information on the Lie group
$\SO_0(p,p)$, its Lie algebra $\so(p,p)$ and the corresponding
root system. Most of this material was given in \cite{Sawyer1}. 
For the convenience of the reader, we gather below the properties
we will need in the sequel.  In this paper, $E_{ij}$ is a rectangular matrix with 0's everywhere except 
at the position $(i,j)$ where it is 1.
Recall that $\SO(p,p)$ is the group of matrices $g\in{\bf SL}(2\,p,\R)$ such that $g^T\,I_{p,p}\,g=I_{p,p}$
where $I_{p,p}= \left [
\begin{array}{cc}
-I_p&0_{p\times p}\\ 
0_{p\times p}&I_p 
\end{array}
\right]$.  Unless otherwise specified, all $2\times 2$ block decompositions in this paper follow the same pattern.
The group $\SO_0(p,p)$ is the connected component of $\SO(p,p)$ containing the identity.
The Lie algebra $\so(p,p)$ of $\SO_0(p,p)$ consists of the matrices 
\begin{align*}
\left [\begin{array}{cc}A&B\\ B^T&D
\end{array}\right]
\end{align*}
where $A$ and $D$ are skew-symmetric. A very important element in our investigations is the Cartan
decomposition of $\so(p,p)$ and $\SO(p,p)$. The maximal compact subgroup $K$ is the subgroup of $\SO(p,p)$ 
consisting of the matrices
\begin{align*}
\left [\begin{array}{rr} 
A&0\\ 
0&D \end{array}\right]
\end{align*}
of size $(2\,p)\times (2\,p)$ such that $A$, $D\in \SO(p)$ (hence $K \simeq \SO(p)
\times \SO(p)$). If $\k$ is the Lie algebra of $K$ and $\p$ is the set of matrices 
\begin{align}\label{pp}
\left [\begin{array}{cc} 
0&B\\ 
B^T&0 \end{array}\right]
\end{align}
then the Cartan decomposition is given by $\so(p,p)=\k\oplus\p$ with corresponding Cartan involution
$\theta(X)=-X^T$. To shorten the notation, for $X\in\p$ as in (\ref{pp}), we will write $X^s=B$.\\
The Cartan space $\a\subset \p$ is the set of matrices 
\begin{align*}H=\left
[\begin{array}{cc}
0_{p\times p}&\mathcal{D}_H\\ 
\mathcal{D}_H&0_{p\times p}
\end{array}\right]
\end{align*}
where $\mathcal{D}_H=\diag[H_1,\dots,H_p]$. Its canonical basis is given by the matrices
\begin{align*}
A_i:=E_{i,p+i} + E_{p+i,i}, 1\leq i\leq p.
\end{align*} 
The restricted roots and associated root vectors for the Lie algebra
$\so(p,p)$ with respect to $\a$ are given in Table \ref{X}.
\begin{table}[h]
\begin{center}
\begin{tabular}{|c|c|c|}\hline
root $\alpha$&multiplicity&root vectors $X_\alpha$\\ \hline
$\alpha(H)=\pm(H_i-H_j)$&1&
$Y_{i,j}^\pm=\pm(E_{i,j}-E_{j,i}+E_{p+i,p+j}-E_{p+j,p+i})
+E_{i,p+j}+E_{p+j,i}$\\
$1\leq i,j\leq p$, $i< j$&&${}+E_{j,p+i}+E_{p+i,j}$\\ \hline
$\alpha(H)=\pm(H_i+H_j)$&1&
$Z_{i,j}^\pm=\pm(E_{i,j}-E_{j,i}-E_{p+i,p+j}+E_{p+j,p+i})
-(E_{i,p+j}+E_{p+j,i})$\\ 
$1\leq i,j\leq p$, $i<j$&&${}+E_{j,p+i}+E_{p+i,j}$\\ \hline
\end{tabular}
\end{center}
\caption{Restricted roots and associated root vectors}\label{X}
\end{table}
The positive roots can be chosen as $\alpha(H)=H_i\pm H_j$, $1\leq i<j\leq p$. 
The simple roots are given by $\alpha_i(H)=H_i-H_{i+1}$, $i=1$, \dots, $p-1$ and $\alpha_p(H)=H_{p-1}+H_p$.
We therefore have the positive Weyl chamber
\begin{align*}
\a^+=\{H\in\a\colon ~H_1>H_2>\dots>H_{p-1}>|H_p|\}.
\end{align*}

 The elements of the Weyl group $W$ act as permutations of the diagonal entries of $\mathcal{D}_X$ with 
eventual sign changes of any  even number of these
entries. The Lie algebra $\k$ is generated by the vectors $X_\alpha+ \theta X_\alpha$. We will use the notation
\begin{align*}
k^t_{X_\alpha}=e^{t(X_\alpha+ \theta X_\alpha)}.
\end{align*}
The linear space $\p$ has a basis formed by 
$A_i\in\a$, $ 1\leq i\leq p$ and by the symmetric matrices
$X_\alpha^s:= \frac12( X_\alpha-\theta X_\alpha)$ which have the
following form
\begin{align*}
Y_{i,j}:&= E_{i,p+j}+ E_{j,p+i}+E_{p+j,i}+ E_{p+i,j},\quad 1\leq i<j\leq p; \\
Z_{i,j}:&= - E_{i,p+j}+E_{j,p+i}-E_{p+j,i} +E_{p+i,j},\quad 1\leq i<j\leq p.
\end{align*}
Thus, if $X\in\p$ is as in (\ref{pp}), then  the vectors $A_i$ 
generate the diagonal entries of $B=X^s$ and   $Y_{i,j}$ and $Z_{i,j}$  the non-diagonal entries.

We now recall the useful matrix $S\in \SO(p+q)$ which allows us to
diagonalize simultaneously all the elements of $\a$. Let
\begin{align*}
S=\left[\begin{array}{cc}
\frac{\sqrt{2}}{2}\,I_p&\frac{\sqrt{2}}{2}\,J_p\\
\frac{\sqrt{2}}{2}\,I_p&-\frac{\sqrt{2}}{2}\,J_p
\end{array}
\right]
\end{align*}
where $J_p= (\delta_{i,p+1-i})$ is a matrix of size $p\times p$.
If $H=\left[\begin{array}{ccc}
0&\mathcal{D}_H\\
\mathcal{D}_H&0
\end{array}\right]$ with $\mathcal{D}_H=\diag[H_1,\dots,H_p]$ then 
$S^T\,H\,S=\diag[H_1,\dots,H_p,-H_p,\dots,-H_1]$.
The ``group'' version of this result is as follows:
\begin{align*}
S^T\,e^H\,S=
\diag[e^{H_1},\dots,e^{H_p},e^{-H_p},\dots,e^{-H_1}].
\end{align*}
\begin{remark}\label{S}
The Cartan projection $a(g)$ on the group $\SO_0(p,p)$,
defined as usual by
\begin{align*}
g=k_1 e^{a(g)} k_2,\ \ \ a(g)\in \overline{\a^+}, k_1,\ k_2\in K
\end{align*}
is related to the singular values of $g\in \SO(p,p)$ in the following way.
Recall that the singular values of $g$ are defined as the non-negative
square roots
of the eigenvalues of $g^Tg$. Let us write $H=a(g)$. We have
\begin{align*}
g^T\,g &=
k_2^T\,e^{2\,H}\,k_2 
= (k_2^T\,S)\,(S^T\,e^{2\,H}\,S)\,(S^T\,k_2)
\end{align*}
where $S^T\,e^{2\,H}\,S$ is a diagonal matrix with nonzero entries
$e^{2\,H_1}$, \dots, $e^{2\,H_p}$,
$e^{-2\,H_p}$, \dots, $e^{-2\,H_1}$, satisfying 
$H_1\ge\ldots\ge H_{p-1}\ge  |H_p|$. Let us write $a_j=e^{H_j}$ for $j=1,\ldots,p-1$ and $a_p=e^{|H_p|}$.
Thus 
the set of $2p$ singular values of $g$ contains the values $a_1\geq \ldots \geq a_p\geq a_p^{-1}\geq \ldots \geq a_1^{-1} $
with $a_1\geq \ldots \geq a_p\geq 1$.

 Then
\begin{align*} 
a(g)=\left[\begin{array}{cc}
0&\mathcal{D}_{a(g)}\\
\mathcal{D}_{a(g)}&0\\
\end{array}\right]
\ \ \hbox{with}\ \ 
\mathcal{D}_{a(g)} =\diag[\log a_1,\dots,\log a_{p-1},{\rm sgn}(H_p)\log a_p].
\end{align*}

Note that this method does not allow us to distinguish between the situations where $H_p$ is positive or negative.
\end{remark}

{\bf Singular elements of $\a$.} In what follows, we will consider
singular elements $X$, $Y\in \partial\a^+$. We need to control the irregularity of $X$ and
$Y$, i.e. consider the simple positive roots annihilating $X$ and $Y$. A
special role is played by the last simple root $\alpha_{p}=H_{p-1}+H_p$,
different from the simple roots $\alpha_i(H)=H_i-H_{i+1}$, $i=1,\ldots,p-1$.  Note that $\alpha_p(X)=0$ implies that the last diagonal entry of $\mathcal{D}_X$ is negative or 0.

We introduce the following definition of the configuration of $X\in \overline{\a^+}$.
\begin{definition}\label{defConfig}
Let $X\in \overline{\a^+}$. 
In what follows, $x_i>0$, $x_i>x_j$ for $i<j$, $s_i\geq 1$, $u\geq 0$ and $\sum_{i=1}^r\,s_i +u	=p$. Let  ${\bf s}=(s_1,\ldots, s_r)$.
We define the configuration of $X$ by: 
\begin{align}
\ \ \ &[{\bf s};u] &{\it if}\ \  \mathcal{D}_X&=\diag\,[ \overbrace{x_{1},\dots,x_{1}}^{ {s}_{1}},
\overbrace{x_{2},\dots,x_{2}}^{ {s}_{2}},\dots,
\overbrace{x_{r},\dots,x_{r}}^{ {s}_{r}},
\overbrace {0,\dots, 0}^{ u}\ ] \label{ug0}  \\
\ \ \ &[{\bf s}] &{\it if}\ \  \mathcal{D}_X&=\diag\,[ \overbrace{x_{1},\dots,x_{1}}^{ {s}_{1}},
\overbrace{x_{2},\dots,x_{2}}^{ {s}_{2}},\dots,
\overbrace{x_{r-1},\dots,x_{r-1}}^{ {s}_{r-1}},
\overbrace {-x_r}^{s_r=1}\ ]\label{nil}  \\
\ \ \ &[{\bf s}]^-  &{\it if}\ \ \mathcal{D}_X&=\diag\,[ \overbrace{x_{1},\dots,x_{1}}^{ {s}_{1}},
\overbrace{x_{2},\dots,x_{2}}^{ {s}_{2}},\dots,
\overbrace{x_{r},\dots,x_{r},-x_r}^{s_r}].\label{prob}
\end{align}
We extend naturally the definition of configuration to any
$X\in\a$, whose configuration is defined as that of the projection
$\pi(X)$ of $X$ on $\overline{\a^+}$.
\end{definition}

\begin{remark}
We will often write $X=X[\ldots]$ when $[\ldots]$ is a configuration of $X$.
For the configuration (\ref{prob}), we write the $-$ superscript in  $X[{\bf s}]^-$ to indicate that  $X$  contains nonzero opposite entries.
We omit the  $-$ superscript in (\ref{nil}) and write $X[{\bf s}]$ because we are essentially in the same case as in (\ref{ug0}) with $u=0$
and $s_r=1$.

 If the number of zero entries $u=0$, it may be omitted in (\ref{ug0}). In particular, in the configurations (\ref{nil}) and (\ref{prob}), $u=0$.
Note that $X=0$ has configuration $[0;p]$. A regular $X\in \a^+$ has the configuration 
$[1^p;0]$ or $[1^{p-1};1]$, where $1^k=(1,\ldots,1)$ with $1$ repeated $k$ times.
\end{remark}

In what follows, we will write $\max {\bf s}=\max_i s_i$
and $\max({\bf s}, u)= \max(\max {\bf s},u)$.
We will show that in the case of the symmetric spaces
$\SO_0(p,p)/\SO(p)\times \SO(p)$, the criterion for
the existence of the density of the convolution
$\delta_{e^X}^\natural \star \delta_{e^Y}^\natural$ is given by the
following definition of an eligible pair $X$ and $Y$:

\begin{definition}\label{defEligible}
Let $X$ and $Y$ be two elements of $\a$
with configurations $[{\bf s};u]$ or $[{\bf s}]^-$    and $[{\bf t};v]$
or $[{\bf t}]^-$ respectively. We say
that $X$ and $Y$ are eligible if one of the two following cases holds 
\begin{align}
\hbox{$u\leq 1$, $v\leq 1$}\ \  and \ \ &\max({\bf s}) + \max({\bf t}) \le 2\,p-2,\label{cas1}\\
\hbox{$u\geq 2$ or $v\geq 2$}\ \  and \ \ &\max({\bf s}, 2u) + \max({\bf t}, 2v) \le 2\,p~~~~~~~~~~~~~~~\label{cas2}\\
\intertext{and, for $p=4,$}
 \{\mathcal{D}_X,\mathcal{D}_Y\}\not=\{\diag[a,a,a,a],\diag[b,b,c,c]\}\  nor \
&\{\diag[a,a,a,-a],\diag[b,b,c,-c]\}, \label{except4}
\end{align}
for any $a\not=0,~b\not=0,|b|\not=|c|$.
\end{definition}

Observe that if $X$ and $Y$ are eligible, then $X\not= 0$ and $Y\not= 0$. The  non-eligible pairs given by (\ref{except4}) are $\{ [4],[(2,2)]\}$,
$\{ [4]^-,[(2,2)]^-\}$ with $u=v=0$  or $\{ [4],[2;2]\}$ and $\{ [4]^-,[2;2]\}$ with $u=0$ and $v=2$.

\begin{remark}
 It is interesting to note that the definition of eligible pairs is more complicated for the space $\SO (p,p)$ than for the spaces $\SO(p,q)$ with $p<q$ (recall that the latter spaces have a much richer root structure).  As for the spaces $\SO(p,q)$ with $p<q$, the number of zeroes on the diagonal of $\mathcal{D}_X$ is important.  Unlike in the case 
$\SO(p,q)$ with $p<q$, this only becomes a factor when the number of zeroes is greater than 1 (as opposed to greater than 0).

In \cite{PGPS_2013}, we showed that if $p<q$ and $X$ and $Y\in\a$ were such that the $\mathcal{D}_X$ and $\mathcal{D}_Y$ have no zero diagonal elements on the diagonal then $\mu_{X,Y}$ was absolutely continuous.  This is no longer the case when $p=q$  and this is one of  main differences between the $\SO (p,p)$ and $\SO(p,q)$ cases.  Another difference is the anomalous case when $p=4$ seen in (\ref{except4}) and the fact that lower dimensional cases require different proofs.  On the other hand,  when the number of zeroes on the diagonal of either $\mathcal{D}_X$ or $\mathcal{D}_Y$ is at least 2, then 
 the proof of Theorem  A is similar to the one found in \cite{PGPS_2013} but requires considering separately a low dimension case $X[5], Y[3;2]$.  
\end{remark}
\section{Basic tools and reductions}\label{tools}
\begin{definition}
For $Z\in\a$, let $V_Z$ be the subspace of $\p$ defined by 
\begin{align*}
V_Z=\hbox{span}\{ X_\alpha-\theta(X_\alpha)\ |\ \alpha(Z)\not=0\}\subset \p.
\end{align*}
We denote by $|V_Z|$ the dimension of $V_Z$. It equals the number of positive roots
$\alpha$ such that $\alpha(Z)\not=0$.
\end{definition}

The following definition and lemmas will help reduce the number of cases of configurations of $(X,Y)$ to verify.
\begin{definition}
We will say that $X$ and $X'\in\a$ are {\em relatives} if exactly one of the diagonal entries of $\mathcal{D}_X$ and $\mathcal{D}_{X'}$ differs by sign. 
If $X$ is a relative to $X'$ and $Y$ is a relative to $Y'$ then we will say that $(X,Y)$ is a relative pair of $(X',Y')$.  
\end{definition}

The properties listed in the following lemma are straightforward.
\begin{lemma}\label{relatives}

~\\ 

\begin{enumerate}
\item $X$ is a relative of $X$ if and only if $\mathcal{D}_X$ has at least one diagonal entry equal to 0, i.e. $u\ge 1$ in $X[{\bf s};u]$. Thus
 $X\in\overline{\a^+}$ has no other relatives in  $\overline{\a^+}$.
\item If $X$ and $X'$ are relatives then $|V_X|=|V_{X'}|$.
\item If $(X,Y)$ and $(X',Y')$ are relative pairs then $(X,Y)$ is an eligible pair if and only if $(X',Y')$ is an eligible pair.
\item If $X=X[{\bf s};u]\in\overline{\a^+}$ with $u>0$ then all $X_i$'s are non-negative.
\item If $X=X[{\bf s};u]$, $Y[{\bf t};v]\in\overline{\a}^+$ are such that $u>0$ or $v>0$ then either $\mathcal{D}_X$, $\mathcal{D}_Y$ have no negative entries or we can choose a relative pair $X'$, $Y'\in\overline{\a}^+$  in such a way that $\mathcal{D}_{X'}$, $\mathcal{D}_{Y'}$ have no negative entries. 
\item If $X$ is a relative of $X'$ and $X'$ is a relative of $X''$ then $X$ and $X''$ are in the same Weyl-group orbit.
\end{enumerate}
\end{lemma}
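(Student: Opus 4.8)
The plan is to verify the six claims of Lemma \ref{relatives} directly from the definitions, since each is essentially bookkeeping on the diagonal entries of $\mathcal{D}_X$ and on the action of the Weyl group $W$ (permutations together with an even number of sign changes).

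For (1), note that $X$ is a relative of itself precisely when flipping the sign of one diagonal entry $H_i$ of $\mathcal{D}_X$ changes no entry of $\mathcal{D}_X$ as a multiset, which forces $H_i=0$; conversely if some $H_i=0$ then flipping its sign does nothing, so $X$ is a relative of itself. The final sentence of (1) follows because two distinct elements of $\overline{\a^+}$ in the same $W$-orbit would have to differ by a permutation and an \emph{even} number of sign changes, whereas being relatives means differing in the sign of exactly one entry; if that entry is nonzero this single sign change is odd and cannot be compensated within $\overline{\a^+}$, so the two are not $W$-equivalent, hence not equal and not both in $\overline{\a^+}$ after reordering. For (2), observe that $|V_X|$ counts the positive roots $H_i\pm H_j$ that do not vanish on $X$, and the set of values $\{|H_i|\}$ (together with which entries are zero) is unchanged when one sign is flipped; since the vanishing of $H_i-H_j$ and $H_i+H_j$ depends only on the unordered pair $\{H_i,H_j\}$ up to simultaneous sign, and flipping the sign of one coordinate simply swaps the roles of $H_i-H_j$ and $H_i+H_j$ at that index, the total count is preserved. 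Claim (3) is immediate from (2) combined with the structure of Definition \ref{defEligible}: the quantities $\max(\mathbf{s})$, $u$, and the exceptional list (\ref{except4}) all depend only on the multiset of absolute values of diagonal entries and the count of zeros, which relatives share; one checks that the configurations $[\mathbf{s};u]$, $[\mathbf{s}]$, $[\mathbf{s}]^-$ are permuted among themselves under a single sign flip in a way that leaves $\max(\mathbf{s})$ and $u$ (or $2u$) invariant, and that the anomalous pairs in (\ref{except4}) come in relative pairs as already noted in the text following the definition.

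For (4), if $X=X[\mathbf{s};u]\in\overline{\a^+}$ with $u>0$, then the ordering $H_1\ge\cdots\ge H_{p-1}\ge|H_p|$ together with the presence of a zero entry (which must be $H_p=0$ in this ordering, the zeros sitting at the end) forces $|H_p|=0$ and hence $H_1,\dots,H_{p-1}\ge 0$, so all entries are non-negative. Claim (5) then follows: if $u>0$ (say), by (4) all entries of $\mathcal{D}_X$ are non-negative already, so if $\mathcal{D}_Y$ has a negative entry it can only be the last, $Y_p<0$, and we replace $Y$ by its relative $Y'$ obtained by flipping that one sign — which lies in $\overline{\a^+}$ since $|Y_p|$ was already the smallest in absolute value — leaving $\mathcal{D}_{X'}=\mathcal{D}_X$ with no negative entries; the symmetric argument handles the case $v>0$. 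Finally (6): if $X$ is a relative of $X'$ and $X'$ is a relative of $X''$, then $\mathcal{D}_X$ is obtained from $\mathcal{D}_{X''}$ by flipping the signs of at most two diagonal entries (one at each step), i.e.\ by an even number of sign changes, which is exactly an element of $W$, so $X$ and $X''$ lie in the same $W$-orbit.

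The only place requiring any care — and the intended ``main obstacle'', though it is mild — is the second assertion of (1), namely that a nonzero-entry relative cannot be reached by a Weyl-group element while staying in $\overline{\a^+}$; this is the parity-of-sign-changes observation, and it is also the conceptual content reused in (6). Everything else is a direct unwinding of Definition \ref{defConfig} and Definition \ref{defEligible}, so the proof will be short, organized as a per-item checklist.
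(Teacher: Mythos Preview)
Your item-by-item verification is exactly what the paper intends; its own ``proof'' is the single sentence ``The properties listed in the following lemma are straightforward.'' Items (2)--(6) are handled correctly.

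There is, however, a genuine gap in your treatment of the second sentence of (1). You read it as the assertion that \emph{every} $X\in\overline{\a^+}$ has no relative in $\overline{\a^+}$ other than itself, and you try to prove this via a parity/$W$-orbit argument. That reading is simply false: for $p=3$ take $\mathcal{D}_X=\diag(3,2,1)$ and $\mathcal{D}_{X'}=\diag(3,2,-1)$; these are distinct relatives and both lie in $\overline{\a^+}$. Your argument is also off on its own terms: establishing that $X$ and $X'$ sit in different $W$-orbits does not prevent both from lying in $\overline{\a^+}$; on the contrary, $\overline{\a^+}$ is a fundamental domain, so distinct orbits are \emph{expected} to meet it in distinct points. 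The phrase ``not both in $\overline{\a^+}$ after reordering'' does not follow from anything you wrote.

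The ``Thus'' in the lemma ties the second sentence to the hypothesis $u\ge 1$ just established. Under that hypothesis $H_p=0$ (the zeros sit at the end in $\overline{\a^+}$); flipping any zero entry returns $X$, while flipping a nonzero $H_i$ forces $i<p$ and $H_i>0$, producing $H_i'=-H_i<0$ in a position where the closure inequalities $H_1\ge\cdots\ge H_{p-1}\ge|H_p|$ demand non-negativity, so the result leaves $\overline{\a^+}$. That is the intended argument, and it has nothing to do with $W$-orbits.
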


\begin{lemma}\label{cousin}
If $(X,Y)$ and $(X',Y')$ are relative pairs then either both sets $Ke^X\,K\,e^YK$ and $Ke^{X'}\,K\,e^{Y'}K$ have nonempty interiors or neither has.
\end{lemma}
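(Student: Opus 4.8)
The plan is to show that the property ``$Ke^X K e^Y K$ has nonempty interior'' depends only on the collection of diagonal entries of $\mathcal{D}_X$ and $\mathcal{D}_Y$ up to sign, since a single sign change on one coordinate is what distinguishes relatives. First I would reduce to the case where $X$ and $X'$ differ by changing the sign of exactly one diagonal entry, say the entry in position $i$ (and similarly $Y$, $Y'$ in position $j$); the general case follows by symmetry and by handling the $X$-change and the $Y$-change one at a time. So it suffices to prove: if $X'$ is a relative of $X$, then $Ke^X K e^Y K$ has nonempty interior if and only if $Ke^{X'} K e^Y K$ does.

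The key step is to exhibit an element $w\in K$ conjugating $e^{X'}$ into the $K$-orbit of $e^X$ — more precisely, to find $k_0\in K$ with $k_0 e^{X'} k_0^{-1}=e^{\sigma X}$ for some $\sigma$ in the Weyl group, OR, if that fails (which is exactly the subtle point when $X\in\overline{\a^+}$ has no zero entry, cf. Lemma \ref{relatives}(1)), to use the matrix $S$ of Remark \ref{S}. Recall from Remark \ref{S} that the Cartan projection only sees the singular values $a_1\ge\cdots\ge a_p\ge a_p^{-1}\ge\cdots\ge a_1^{-1}$, and that the construction there does not distinguish the sign of $H_p$. Concretely, $X$ and $X'$ relatives means $e^X$ and $e^{X'}$ have the same multiset of singular values, hence $g^Tg$ and $(g')^Tg'$ are conjugate for $g=e^X$, $g'=e^{X'}$; the conjugating matrix lies in $\SO(p+q)$ but a priori only in $\SO(p,p)$ after adjusting, not necessarily in $K$. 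The clean way around this is the observation already recorded: any two relatives $X,X''$ obtained by a chain of two sign changes lie in the same Weyl orbit (Lemma \ref{relatives}(6)), and the Weyl group is realized by $K$. So I would argue that $e^{X'} = k_1 e^{X''} k_2$ with $X''$ in the Weyl orbit of $X$ for a suitable relative $X''$ of $X'$, reducing to the Weyl-conjugation case, which is transparent.

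Granting such a relation $e^{X'}=k_1 e^{X} k_2$ with $k_1,k_2\in K$ (after the Weyl reduction; note $X''$ Weyl-conjugate to $X$ gives $e^{X''}=k e^X k^{-1}$), one gets
\begin{align*}
K e^{X'} K e^{Y'} K = K (k_1 e^X k_2) K (k_1' e^Y k_2') K = K e^X K e^Y K,
\end{align*}
using left/right $K$-invariance of the double cosets and absorbing $k_2 K k_1' = K$ on the middle factor. Hence the two sets are literally equal, and in particular one has nonempty interior iff the other does. The only genuine obstacle is the case flagged in Lemma \ref{relatives}(1): an $X\in\overline{\a^+}$ with all diagonal entries nonzero is its own only relative in $\overline{\a^+}$, so ``relative'' of such an $X$ necessarily leaves $\overline{\a^+}$; one must then move the relative $X'$ back to the chamber by the Weyl group, which only costs a $K$-conjugation, so the argument still closes. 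I expect the main care to be bookkeeping: verifying that one sign change followed by another realizable sign change (Lemma \ref{relatives}(6)) suffices to relate $X'$ to a Weyl translate of $X$, and keeping the $X$-side and $Y$-side manipulations independent so the two-variable statement follows from the one-variable one.
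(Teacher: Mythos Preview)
Your argument has a genuine gap at the key step. You want to write $e^{X'}=k_1 e^X k_2$ with $k_1,k_2\in K$, but this is equivalent to $Ke^XK=Ke^{X'}K$, i.e.\ to $X$ and $X'$ having the same Cartan projection in $\overline{\a^+}$. In type $D_p$ the Weyl group changes only an \emph{even} number of signs, so a single sign flip is \emph{not} realized by any $w\in W\subset K$. Concretely, take $\mathcal{D}_X=\diag[a,\dots,a]$ and $\mathcal{D}_{X'}=\diag[a,\dots,a,-a]$ with $a>0$: both lie in $\overline{\a^+}$, they are relatives, yet $Ke^XK\neq Ke^{X'}K$. Your attempted reduction via Lemma~\ref{relatives}(6) is circular: you produce $X''$ Weyl-equivalent to $X$, but then you still need $e^{X'}=k_1 e^{X''}k_2$ with $k_i\in K$, and $X'$ and $X''$ are again relatives differing by a single sign. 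For the same reason your ``one variable at a time'' reduction fails: you cannot in general replace $X$ by $X'$ while leaving $Y$ fixed.

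The paper's proof supplies exactly the missing ingredient: the matrix $J_0=\diag[1,\dots,1,-1]\in O(p)\times O(p)\setminus K$. Conjugation by $J_0$ flips the sign of the last entry of $\mathcal{D}_H$ for $H\in\a$, and although $J_0\notin K$, it \emph{normalizes} $K$ (conjugating $\SO(p)$ by an element of $O(p)$ returns $\SO(p)$), so $J_0KJ_0=K$. After Weyl-moving the relevant entries to the last slot, one gets $Ke^XKe^YK=J_0\,(Ke^{X'}Ke^{Y'}K)\,J_0$; since $g\mapsto J_0 g J_0$ is a homeomorphism, the two sets have nonempty interior simultaneously. Note that this argument necessarily treats $X$ and $Y$ together: the $J_0$'s inserted at every $K$ must pair off, which they do because $J_0^2=I$.
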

\begin{proof}
Let $J_0=\diag\{\overbrace{1,\dots,1}^{2\,p-1},-1\}$ and note that $J_0$ is orthogonal and that $J_0^2\in\SO(p)\times \SO(p)$. 
Suppose that $X$, $Y$, $X'$ and $Y'$ are as in the statement of the lemma. Suppose that $w_1$, $w_2\in W\subset K$ are such that the 
element of $\mathcal{D}_{w_1\cdot X'}$ (resp. $\mathcal{D}_{w_2\cdot Y'}$) that differs by a sign from the corresponding element of $\mathcal{D}_{X}$ (resp. $\mathcal{D}_{Y}$)
is placed at the end. Then
\begin{align*}
K\,e^X\,K\,e^Y\,K
&=J_0\,K\,J_0\,w_1\,e^X\,w_1^{-1}\,J_0\,K\,J_0\,w_2\,e^Y\,w_2^{-1}\,J_0\,K\,J_0\\
&=J_0\,K\,(J_0\,w_1\,e^X\,w_1^{-1}\,J_0)\,K\,(J_0\,w_2\,e^Y\,w_2^{-1}\,J_0)\,K\,J_0
=J_0\,K\,e^{X'}\,K\,e^{Y'}\,K\,J_0
\end{align*}
which allows us to conclude.
\end{proof}

In the sequel we  use some ideas, results and notations of \cite[Section 3]{PGPS_Lie2010}, that we strengthen and complete.
\begin{prop}\label{KKK}
~\\

\begin{itemize}
\item[(i)] The density of the measure $m_{X,Y}$ exists if and only if its support
$Ke^X Ke^Y K$ has nonempty interior.
\item[(ii)] Consider the analytic map $T\colon K \times K \times K \to G$
defined by
\begin{align*}
T(k_1,k_2,k_3)=k_1\,e^X\,k_2\,e^Y\,k_3.
\end{align*}
The set $T(K\times K\times K)=Ke^X Ke^Y K$ contains an open set if and only if the derivative of $T$ is
surjective for some choice of ${\bf k}=(k_1,k_2,k_3)$.
\end{itemize}
\end{prop}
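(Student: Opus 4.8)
The plan is to prove Proposition \ref{KKK} by combining two classical facts: the characterization of absolutely continuous measures via the interior of their support, and the submersion/rank theorem for smooth maps between manifolds. I will treat the two items separately.

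\textbf{Part (i).} First I would recall that $m_{X,Y}=\delta_{e^X}^\natural\star\delta_{e^Y}^\natural$ is a $K$-bi-invariant measure on $G$ supported exactly on $Ke^XKe^YK$, which is the image of the compact set $K\times K\times K$ under a real-analytic map, hence a compact semi-analytic (in fact subanalytic) subset of $G$. The ``only if'' direction is immediate: a measure absolutely continuous with respect to Haar measure $m_G$ cannot be carried by a set of empty interior, since a closed set with empty interior has $m_G$-measure zero (its complement is open and dense, and here the support is even a proper subanalytic set, so it has measure zero unless it has interior). For the ``if'' direction, I would use the structure of $m_{X,Y}$ as a pushforward of $m_K\otimes m_K\otimes m_K$ under $T$ (defined in (ii)), together with $K$-bi-invariance: if $Ke^XKe^YK$ has nonempty interior, pick a point $g_0=T({\bf k}_0)$ in that interior at which $T$ has maximal rank; near such a point $T$ is a submersion onto a neighborhood of $g_0$, so the pushforward of the (smooth, positive) measure on $K^3$ has a density there; by $K$-bi-invariance this density, suitably averaged, extends to a density on all of the interior, and $m_{X,Y}$ charges no mass on the boundary (a lower-dimensional subanalytic set). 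Here I would cite the relevant machinery from \cite{PGPS_Lie2010}; the argument is the same as in that reference and in \cite{PGPS_2013}, so I would keep it brief.

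\textbf{Part (ii).} The map $T\colon K\times K\times K\to G$ is real-analytic, and its image is the compact connected subanalytic set $Ke^XKe^YK$. If $dT_{\bf k}$ is surjective at some ${\bf k}$, then by the implicit function theorem $T$ is an open map near ${\bf k}$, so its image contains an open neighborhood of $T({\bf k})$, hence contains an open set. Conversely, suppose $dT_{\bf k}$ is nowhere surjective, i.e. $\operatorname{rank} dT_{\bf k}<\dim G$ for every ${\bf k}$. Let $\rho=\max_{\bf k}\operatorname{rank} dT_{\bf k}<\dim G$, attained on a nonempty open subset $U\subset K^3$ (the rank function is lower semicontinuous). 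By the constant rank theorem, $T(U)$ is locally an embedded $\rho$-dimensional submanifold of $G$, hence has empty interior and, being a countable union of such pieces (plus the lower-rank locus, which by an induction on rank is also a finite union of submanifolds of dimension $<\dim G$ since everything is subanalytic), the full image $T(K^3)$ is a finite union of submanifolds of dimension $<\dim G$. A finite union of lower-dimensional submanifolds has empty interior, contradicting the hypothesis that $Ke^XKe^YK$ contains an open set. This gives the equivalence.

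\textbf{Main obstacle.} The routine part is the rank-theorem bookkeeping; the genuinely delicate point is making the measure-theoretic half of (i) fully rigorous — namely, that when the support has nonempty interior the measure $m_{X,Y}$ is actually absolutely continuous (not merely that it ``could be''), and that it puts no mass on the lower-dimensional boundary stratum. This is handled by exploiting that $m_{X,Y}$ is the image of a smooth measure under $T$ and by a stratification of $K^3$ according to the rank of $dT$: on the top-rank open stratum the pushforward is absolutely continuous by the submersion theorem, and the complement maps into a lower-dimensional set which one checks is $m_G$-null. I would lean on \cite[Section 3]{PGPS_Lie2010} for these details rather than reproving them, noting only where the present setting (groups $\SO_0(p,p)$ etc.) requires no change. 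Once (ii) is in hand, (i) follows cleanly by combining it with the absolute-continuity criterion.
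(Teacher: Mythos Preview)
Your proposal is correct and follows essentially the same approach as the paper. The paper's own proof is even more terse: it simply cites \cite{PGPS2} for part (i) (rather than \cite{PGPS_Lie2010}) and \cite[p.~479]{Helga} for part (ii), without writing out the rank-theorem argument you sketch; your version supplies the details that the paper leaves to the references, but the underlying ideas are the same.
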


\begin{proof}
Part (i) follows from arguments explained in \cite{PGPS2} in the case
of the support of the measure $\mu_{X,Y}$, equal to $a(e^X\,Ke^Y).$
Part (ii) is justified for example in \cite[p. 479]{Helga}. 
\end{proof}

\begin{cor}\label{relativesReduction}
Let $(X,Y)$ and $(X',Y')$ be relative pairs.
The measure $m_{X,Y}$ is absolutely continuous if and only if
the measure $m_{X',Y'}$  is absolutely continuous.
\end{cor}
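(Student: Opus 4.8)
The plan is to deduce Corollary \ref{relativesReduction} by stringing together Lemma \ref{cousin} and Proposition \ref{KKK}. First I would invoke Proposition \ref{KKK}(i): the measure $m_{X,Y}$ is absolutely continuous (equivalently, has a density) if and only if its support $Ke^XKe^YK$ has nonempty interior. Applying the same statement to the pair $(X',Y')$, absolute continuity of $m_{X',Y'}$ is equivalent to $Ke^{X'}Ke^{Y'}K$ having nonempty interior. So the corollary reduces to the claim that $Ke^XKe^YK$ has nonempty interior precisely when $Ke^{X'}Ke^{Y'}K$ does.

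Next I would simply quote Lemma \ref{cousin}, which asserts exactly this: if $(X,Y)$ and $(X',Y')$ are relative pairs, then either both of $Ke^XKe^YK$ and $Ke^{X'}Ke^{Y'}K$ have nonempty interior, or neither does. Combining the two equivalences gives the chain: $m_{X,Y}$ absolutely continuous $\iff$ $Ke^XKe^YK$ has nonempty interior $\iff$ $Ke^{X'}Ke^{Y'}K$ has nonempty interior $\iff$ $m_{X',Y'}$ absolutely continuous, which is the assertion of the corollary.

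One small point worth checking is that ``nonempty interior'' and ``contains an open set'' are being used interchangeably between Lemma \ref{cousin} and Proposition \ref{KKK}; in a finite-dimensional manifold these coincide, so no subtlety arises there. There is essentially no obstacle in this proof: it is a bookkeeping argument assembling results already in hand, and the real content was pushed into Lemma \ref{cousin}, whose proof exploits the orthogonal matrix $J_0$ conjugating a sign change on the last coordinate and the fact that $J_0^2\in\SO(p)\times\SO(p)$. Accordingly I would present the corollary's proof in one or two sentences, citing Proposition \ref{KKK}(i) and Lemma \ref{cousin}.
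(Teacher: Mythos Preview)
Your proposal is correct and matches the paper's own proof exactly: the authors simply write ``We use Proposition \ref{KKK}(i) and Lemma \ref{cousin}.'' Your additional remark about ``nonempty interior'' versus ``contains an open set'' is fine but unnecessary, since both formulations already appear in the paper without issue.
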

\begin{proof}
We use   Proposition  \ref{KKK}(i) and Lemma \ref{cousin}.
\end{proof}

\begin {prop}\label{U}
Let $U_Z=\k+\Ad(e^Z)\k$. The measure $m_{X,Y}$ is absolutely continuous if and only if there exists $k\in K$ such that 
\begin {align}\label{UXY}
U_{-X}+\Ad(k)U_Y=\g.
\end {align}
\end{prop}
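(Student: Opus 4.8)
The plan is to combine the two criteria already available in the excerpt: the differential criterion of Proposition \ref{KKK}(ii), which says that $Ke^XKe^YK$ has nonempty interior precisely when the derivative of $T(k_1,k_2,k_3)=k_1e^Xk_2e^Ye_3$ is surjective at some point, together with Proposition \ref{KKK}(i), which identifies nonempty interior of the support with absolute continuity of $m_{X,Y}$. So it suffices to show that surjectivity of $dT$ at some $\mathbf{k}=(k_1,k_2,k_3)$ is equivalent to the existence of $k\in K$ with $U_{-X}+\Ad(k)U_Y=\g$.

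First I would compute the image of $dT$ at a point $\mathbf{k}$. Writing the tangent space to $K\times K\times K$ via left translations and using that $T(k_1,k_2,k_3)=k_1e^Xk_2e^Yk_3$, a direct differentiation gives, after right-translating the image back to $\g=T_eG$, something of the form
\begin{align*}
\mathrm{Im}\, dT_{\mathbf{k}}=\Ad(k_1)\Bigl(\k+\Ad(e^X)\bigl(\k+\Ad(k_2)\,\Ad(e^Y)\k\bigr)\Bigr),
\end{align*}
up to the harmless conjugation by $\Ad(k_3)$ and a choice of which side to translate; the three $\k$-summands come from the three $K$-factors and the $\Ad(e^X)$, $\Ad(e^Y)$ from differentiating through the fixed elements $e^X$, $e^Y$. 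Since $\Ad(k_1)$ is invertible, surjectivity of $dT_{\mathbf k}$ is equivalent to $\k+\Ad(e^X)\k+\Ad(e^X)\Ad(k_2)\Ad(e^Y)\k=\g$. Now I would apply $\Ad(e^{-X})$ to both sides (again invertible, so it preserves the condition ``$=\g$''): the condition becomes $\Ad(e^{-X})\k+\k+\Ad(k_2)\Ad(e^Y)\k=\g$, i.e. $\bigl(\k+\Ad(e^{-X})\k\bigr)+\Ad(k_2)\bigl(\k+\Ad(e^Y)\k\bigr)=\g$ once one absorbs the inner $\k$ into $\Ad(k_2)\k=\k$ appropriately — here one uses $\Ad(k_2)\k=\k$ to merge the stray $\k$ into the second bracket. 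In the notation $U_Z=\k+\Ad(e^Z)\k$ this reads exactly $U_{-X}+\Ad(k_2)U_Y=\g$, which is \eqref{UXY} with $k=k_2$.

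The remaining point is bookkeeping: as $\mathbf{k}$ ranges over $K\times K\times K$, the only factor that affects the above condition is $k_2$ (the factor $k_1$ disappears under $\Ad(k_1)$, and $k_3$ can be absorbed or ignored because it only right-translates the image of $dT$ inside $G$ and does not change whether the image is all of $\g$ — more precisely, working with the map $(k_1,k_2,k_3)\mapsto k_1e^Xk_2e^Yk_3$ and the standard identification, $k_3$ contributes the summand $\Ad(k_1e^Xk_2e^Y)\k$ on the far right, but $\Ad(k_1e^Xk_2e^Y)\k$ is conjugate to $\k$ and is already contained in the other summands after the reductions above). Hence ``$dT$ surjective for some $\mathbf k$'' $\iff$ ``$U_{-X}+\Ad(k)U_Y=\g$ for some $k\in K$'', and Proposition \ref{KKK} finishes the proof.

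The main obstacle I expect is getting the differentiation of $T$ and the ensuing sequence of $\Ad$-translations exactly right, in particular tracking on which side each $\k$-summand sits and checking that the $k_1$- and $k_3$-summands genuinely drop out so that only $k_2$ survives; this is the step where a sign or a left/right slip would break the equivalence, so I would carry it out carefully with explicit curves $k_i(t)=k_i e^{tZ_i}$, $Z_i\in\k$, and read off $\tfrac{d}{dt}\big|_0 T(k_1(t),k_2(t),k_3(t))$ term by term. Everything else (invertibility of the various $\Ad(g)$, the identity $\Ad(k)\k=\k$ for $k\in K$, and the translation between ``image is all of $\g$'' and ``support has nonempty interior'') is routine given Proposition \ref{KKK}.
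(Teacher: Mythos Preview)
Your proposal is correct and follows essentially the same route as the paper: compute $dT_{\mathbf{k}}$, strip off the outer $k_1$- and $k_3$-contributions using $\Ad(k_i)\k=\k$ and invertibility of left/right translations, then apply $\Ad(e^{-X})$ to rewrite the surjectivity condition as $\k+\Ad(e^{-X})\k+\Ad(k_2)\Ad(e^Y)\k=\g$, i.e.\ $U_{-X}+\Ad(k_2)U_Y=\g$. The paper does this a bit more explicitly by multiplying the image of $dT$ on the left and right by suitable group elements rather than speaking of ``right-translating to $\g$'', but the computation and the role of $k_2$ as the only surviving variable are identical.
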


\begin{proof}
We want to show that this condition is equivalent to the derivative of $T$ at ${\bf k}$ being surjective. We have 
\begin{align}
dT_{{\bf k}}(A,B,C)&=
\frac{d}{dt}\big|_{t=0}\ e^{tA}k_1\,e^X\,e^{tB}k_2\,e^Y\,e^{tC}k_3
\nonumber\\
&=A\,k_1\,e^X\,k_2\,e^Y\,k_3
+k_1\,e^X\,B\,k_2\,e^Y\,k_3
+k_1\,e^X\,k_2\,e^Y\,C\,k_3\label{A}
\end{align}
We now transform the space of all matrices of the form (\ref{A}) without modifying its dimension: 
\begin{align*}
&\dim\{A\,k_1\,e^X\,k_2\,e^Y\,k_3
+k_1\,e^X\,B\,k_2\,e^Y\,k_3
+k_1\,e^X\,k_2\,e^Y\,C\,k_3\colon A,B,C \in\k\}\\
=&\, \dim
\{k_1^{-1}\,A\,k_1\,e^X\,k_2\,e^Y
+e^X\,B\,k_2\,e^Y
+e^X\,k_2\,e^Y\,C\colon A,B,C \in\k\}\\
=&\, \dim
\{A\,e^X\,k_2\,e^Y
+e^X\,B\,k_2\,e^Y
+e^X\,k_2\,e^Y\,C\colon A,B,C \in\k\}\\
=&\, \dim
\{e^{-X}\,A\,e^X+B+k_2\,e^Y\,C\,e^{-Y}k_2^{-1}\colon A,B,C
\in\k\}
\end{align*}
The space in the last line equals 
$\k+\Ad(e^{-X})(\k)+\Ad(k_2)\,(\Ad(e^Y)(\k)) = U_{-X}+\Ad(k_2)U_Y$.
\end{proof}

In order to apply the condition (\ref{UXY}), we will consider convenient symmetrized  root vectors and the spaces $V_Z$
generated by them.

\begin{lemma}\label{3.2[7]}
Let $Z\in\a$. Then $U_Z=\k+V_Z=U_{-Z}$.
\end{lemma}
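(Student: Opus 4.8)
\textbf{Proof plan for Lemma \ref{3.2[7]}.} The plan is to establish the three-way identity $U_Z=\k+V_Z=U_{-Z}$ by working at the level of Lie algebras and exploiting the root-space decomposition of $\g$ together with the explicit description of $V_Z$. First I would recall that $U_Z=\k+\Ad(e^Z)\k$, so it suffices to understand $\Ad(e^Z)\k$ modulo $\k$. Decompose $\k$ using the root vectors: $\k$ is spanned by the $X_\alpha+\theta X_\alpha$ over positive roots $\alpha$, together with a trivial part (the centralizer of $\a$ in $\k$, which is already fixed by $\Ad(e^Z)$ and causes no trouble). For a single positive root $\alpha$, I would compute $\Ad(e^Z)(X_\alpha+\theta X_\alpha)$ explicitly. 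Since $[H,X_\alpha]=\alpha(H)X_\alpha$ and $\theta X_\alpha=X_{-\alpha}$ lies in the $-\alpha$ root space, we get $\Ad(e^Z)(X_\alpha+\theta X_\alpha)=e^{\alpha(Z)}X_\alpha+e^{-\alpha(Z)}\theta X_\alpha$.

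The key computation is then a $2$-dimensional linear algebra fact: the span of the two vectors $X_\alpha+\theta X_\alpha$ (which lies in $\k$) and $e^{\alpha(Z)}X_\alpha+e^{-\alpha(Z)}\theta X_\alpha$ inside $\g$. If $\alpha(Z)=0$ these two vectors coincide, contributing nothing new. If $\alpha(Z)\neq 0$, the two vectors are linearly independent, so their span is the same as the span of $X_\alpha+\theta X_\alpha$ and $X_\alpha-\theta X_\alpha$; equivalently, modulo $\k$ (which already contains $X_\alpha+\theta X_\alpha$) we pick up exactly the direction $X_\alpha-\theta X_\alpha$. Summing over all positive roots, $\Ad(e^Z)\k$ modulo $\k$ is spanned by $\{X_\alpha-\theta X_\alpha : \alpha(Z)\neq 0\}$, which is precisely $V_Z$ by definition. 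Hence $U_Z=\k+\Ad(e^Z)\k=\k+V_Z$. Finally, since the condition $\alpha(Z)\neq 0$ is the same as $\alpha(-Z)\neq 0$ (and since $-\alpha$ ranges over negative roots when $\alpha$ ranges over positive ones, with $X_{-\alpha}-\theta X_{-\alpha}$ spanning the same line as $X_\alpha-\theta X_\alpha$), we get $V_Z=V_{-Z}$, and therefore $U_Z=\k+V_Z=\k+V_{-Z}=U_{-Z}$.

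The main obstacle I anticipate is bookkeeping rather than conceptual: one must be careful that $\k$ is not merely the span of the $X_\alpha+\theta X_\alpha$ but also contains $\m=Z_{\k}(\a)$, and one must check that the passage from "span of two specific vectors" to "sum over all roots" does not lose dimension — i.e. that the various root contributions are in direct sum with each other and with $\m$, which follows from the root-space decomposition $\g=\a\oplus\m\oplus\bigoplus_{\alpha}\g_\alpha$ being a direct sum. One also needs the elementary but essential observation that $e^{t}$ and $e^{-t}$ are distinct for $t\neq 0$, which is exactly why the hypothesis $\alpha(Z)\neq 0$ enters. Since all roots here have multiplicity one (from Table \ref{X}), each root space $\g_\alpha$ is one-dimensional and the argument is clean; in the general multiplicity case one would run the same argument in each multiplicity dimension separately. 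No deep input beyond the structure theory already recalled in the excerpt is needed.
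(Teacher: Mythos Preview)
Your proposal is correct and follows essentially the same route as the paper's proof: both compute $\Ad(e^Z)(X_\alpha+\theta X_\alpha)=e^{\alpha(Z)}X_\alpha+e^{-\alpha(Z)}\theta X_\alpha$ and observe that, together with $X_\alpha+\theta X_\alpha\in\k$, this yields the extra direction $X_\alpha-\theta X_\alpha$ precisely when $\alpha(Z)\neq 0$, whence $U_Z=\k+V_Z$, and then use the trivial observation $V_Z=V_{-Z}$. The only cosmetic difference is that the paper writes out the two inclusions $\k+V_Z\subset U_Z$ and $U_Z\subset\k+V_Z$ separately, whereas you package them into a single $2\times 2$ span argument; your extra remarks about $\m$ and higher multiplicities are harmless (here $\m=0$ and all multiplicities are $1$).
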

\begin{proof}
Clearly $V_Z=V_{-Z}$. We show that $V_Z\subset U_Z$ and therefore that $\k+V_Z\subset U_Z$: let $\alpha$ be a root such that $\alpha(Z)\not=0$. Note that
$[Z,X_\alpha]=\alpha(Z)\,X_\alpha$ and
$[Z,\theta(X_\alpha)]=-\alpha(Z)\,\theta(X_\alpha)$. Let $U=X_\alpha+\theta(X_\alpha)\in\k$. Now,
\begin{align*}
\Ad(e^Z)\, U
&=e^{\ad Z}\,(X_\alpha+\theta(X_\alpha))
=e^{\alpha(Z)}\,X_\alpha+e^{-\alpha(Z)}\,\theta(X_\alpha).
\end{align*}
Therefore $X_\alpha=(e^{\alpha(Z)}-e^{-\alpha(Z)})^{-1}\left(-e^{-\alpha(Z)}\, U
+ \Ad(e^Z)\, U\right)\in \k+\Ad(e^Z)(\k)=U_Z$. The vector $\theta X_\alpha$ is a root vector for the root $-\alpha$, so we also have $\theta X_\alpha\in U_Z$. 

It remains to show that $U_Z\subset\k+V_Z$. It suffices to show that $\Ad(e^Z)\,\k\subset\k+V_Z$: for every $\alpha$,
$\Ad(e^Z)\,(X_\alpha+\theta(X_\alpha))=e^{\alpha(Z)}\,X_\alpha+e^{-\alpha(Z)}\,\theta(X_\alpha)
=\frac{e^{\alpha(Z)}+e^{-\alpha(Z)}}{2}\,(X_\alpha+\theta(X_\alpha))+\frac{e^{\alpha(Z)}-e^{-\alpha(Z)}}{2}\,\,(X_\alpha-\theta(X_\alpha))
\in\k+V_Z$. 
\end{proof}

The following corollary is then straightforward:
\begin{cor}\label{VXVY}
The measure $m_{X,Y}$ is absolutely continuous if and only if  there exists $k\in K$ such that
\begin{align}\label{conditionSym}
V_X+\Ad(k)\,V_Y= \p.
\end{align}
\end{cor}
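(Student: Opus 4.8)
The plan is to deduce Corollary \ref{VXVY} directly from Proposition \ref{U} together with Lemma \ref{3.2[7]}. By Proposition \ref{U}, the measure $m_{X,Y}$ is absolutely continuous if and only if there exists $k\in K$ with $U_{-X}+\Ad(k)\,U_Y=\g$. Lemma \ref{3.2[7]} gives $U_{-X}=U_X=\k+V_X$ and $U_Y=\k+V_Y$, so the condition becomes
\begin{align*}
\k+V_X+\Ad(k)\,(\k+V_Y)=\g,
\end{align*}
i.e. $\k+V_X+\Ad(k)\,\k+\Ad(k)\,V_Y=\g$. Since $k\in K$ and $\k$ is the Lie algebra of $K$, we have $\Ad(k)\,\k=\k$, so this simplifies to $\k+V_X+\Ad(k)\,V_Y=\g$.

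The remaining point is to pass from a condition in $\g=\k\oplus\p$ to a condition purely in $\p$. First I would note that $V_X$ and $V_Y$ are by definition subspaces of $\p$, and $\Ad(k)$ for $k\in K$ preserves the Cartan decomposition, so $\Ad(k)\,V_Y\subset\p$ as well. Hence $V_X+\Ad(k)\,V_Y\subset\p$, and the sum $\k+\big(V_X+\Ad(k)\,V_Y\big)$ is a sum of a subspace of $\k$ with a subspace of $\p$. Using the directness of $\g=\k\oplus\p$, the equality $\k+V_X+\Ad(k)\,V_Y=\g=\k\oplus\p$ holds if and only if $V_X+\Ad(k)\,V_Y=\p$. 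This gives the equivalence with condition (\ref{conditionSym}) and completes the proof.

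There is no real obstacle here: the corollary is a formal consequence of the two preceding results, and the only thing to be careful about is the elementary linear-algebra step that, for $W_1\subset\k$ and $W_2\subset\p$, one has $W_1+W_2=\g$ iff $W_1=\k$ and $W_2=\p$ — which follows immediately from projecting onto the two factors of $\g=\k\oplus\p$. Here $W_1=\k$ is automatic since $\k$ itself is one of the summands, so only $V_X+\Ad(k)\,V_Y=\p$ needs to be extracted. I would present this as a two- or three-line argument, citing Proposition \ref{U} and Lemma \ref{3.2[7]} and the $\Ad(K)$-invariance of $\k$ and $\p$.
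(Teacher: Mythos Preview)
Your proof is correct and is exactly the intended argument: the paper itself gives no proof of this corollary, simply declaring it ``straightforward'' from Proposition \ref{U} and Lemma \ref{3.2[7]}. Your write-up spells out precisely the two steps (replacing $U_{\pm X}$, $U_Y$ by $\k+V_X$, $\k+V_Y$ and then using $\Ad(k)\k=\k$, $\Ad(k)\p=\p$ together with the directness of $\g=\k\oplus\p$) that make this immediate.
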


\begin{cor}\label{TOTAL}
The measure $m_{X,Y}$ is absolutely continuous if and only if  there exists a dense open subset $U\subset K$ such that for every $k\in U$
\begin{enumerate}
\item $V_{w_1 \cdot X}+\Ad(k)\,V_{w_2\cdot Y}= \p$ for every $w_1$, $w_2\in W$,
\item For every $r< 2p$, the matrix obtained by removing the first $r$ rows and $r$ columns of $k$ is non-singular.
\end{enumerate}
\end{cor}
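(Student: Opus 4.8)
The plan is to upgrade Corollary \ref{VXVY} from ``there exists $k$'' to ``for all $k$ in a dense open set'', and then to tack on the extra rank condition (2). First I would observe that the set
\[
\mathcal{O}_{w_1,w_2}=\{k\in K\colon V_{w_1\cdot X}+\Ad(k)\,V_{w_2\cdot Y}=\p\}
\]
is open in $K$, because the condition that a family of vectors depending continuously (indeed real-analytically) on $k$ spans $\p$ is an open condition (non-vanishing of at least one $\dim\p\times\dim\p$ minor of the matrix whose columns are a spanning set of $V_{w_1\cdot X}$ together with $\Ad(k)$ applied to a spanning set of $V_{w_2\cdot Y}$). Next, since $K$ is connected and the map $k\mapsto$ (those minors) is real-analytic, $\mathcal{O}_{w_1,w_2}$ is either empty or dense. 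Now I would use the key fact that $V_{w\cdot Z}=\Ad(n_w)\,V_Z$ for any representative $n_w\in K$ of $w\in W$ (because $w$ permutes/sign-changes the roots, hence permutes the symmetrized root vectors up to sign, and $W\subset K$); this already appears implicitly in the paper's framework. Consequently $\mathcal{O}_{w_1,w_2}$ is a translate of $\mathcal{O}_{e,e}$: indeed $V_{w_1\cdot X}+\Ad(k)V_{w_2\cdot Y}=\Ad(n_{w_1})\bigl(V_X+\Ad(n_{w_1}^{-1}k\,n_{w_2})V_Y\bigr)$, so $\mathcal{O}_{w_1,w_2}=n_{w_1}\,\mathcal{O}_{e,e}\,n_{w_2}^{-1}$. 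By Corollary \ref{VXVY}, absolute continuity of $m_{X,Y}$ is equivalent to $\mathcal{O}_{e,e}\neq\emptyset$, hence to $\mathcal{O}_{e,e}$ being dense open, hence to each $\mathcal{O}_{w_1,w_2}$ being dense open.

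Then I would intersect over all $(w_1,w_2)\in W\times W$: a finite intersection of dense open sets is dense open, giving a dense open $U_1=\bigcap_{w_1,w_2}\mathcal{O}_{w_1,w_2}$ on which condition (1) holds. For condition (2), I would note that for each fixed $r<2p$ the set of $k\in K$ whose lower-right $(2p-r)\times(2p-r)$ block is singular is the zero set of a nonzero real-analytic function on $K$ (it is the vanishing of that minor, and it is not identically zero on $K$ — e.g. it is nonzero at $k=I$, or more robustly at suitable elements of the diagonal torus, so by connectedness of $K$ and real-analyticity its complement $U_2^{(r)}$ is dense open). Taking $U=U_1\cap\bigcap_{r<2p}U_2^{(r)}$, a finite intersection of dense open subsets of $K$, yields the desired dense open set, and by construction every $k\in U$ satisfies (1) and (2). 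Conversely, if such a $U$ exists then in particular (1) holds for $w_1=w_2=e$ and some $k$, so $m_{X,Y}$ is absolutely continuous by Corollary \ref{VXVY}. This proves the equivalence.

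The only genuinely delicate point is the claim that each of the relevant real-analytic functions on $K$ is \emph{not} identically zero: for condition (1) this is exactly the content of Corollary \ref{VXVY} (one $k$ works $\Rightarrow$ the minor is not identically zero), and the $W$-equivariance $V_{w\cdot Z}=\Ad(n_w)V_Z$ reduces all the $(w_1,w_2)$ cases to that single one; for condition (2) one must exhibit, for every $r<2p$, at least one $k\in K\simeq\SO(p)\times\SO(p)$ with nonsingular lower-right $(2p-r)$-block, which is elementary (the identity matrix already works for $r<p$, and block-rotations handle the remaining $r$). Everything else is the standard ``connected real-analytic manifold, zero set of a not-identically-zero analytic function is nowhere dense'' argument together with the fact that finite intersections of dense open sets are dense open. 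I would therefore expect the bulk of the write-up to be the $W$-equivariance identity for $V_Z$ and the explicit witnesses for condition (2), with the topological core being a one-line invocation of analyticity and connectedness.
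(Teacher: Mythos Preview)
Your proposal is correct and follows essentially the same approach as the paper: analyticity plus ``one $k$ works'' gives a dense open set, $W$-equivariance propagates this to all $(w_1,w_2)$, condition (2) is handled by exhibiting a witness, and a finite intersection of dense open sets finishes. Two minor remarks: for the $W$-equivariance step you use the translate identity $\mathcal{O}_{w_1,w_2}=n_{w_1}\mathcal{O}_{e,e}n_{w_2}^{-1}$ coming from $V_{w\cdot Z}=\Ad(n_w)V_Z$, whereas the paper routes this through the equivalent geometric statement that $a(e^X K e^Y)$ has non-empty interior (which is manifestly $W$-invariant since $W\subset K$); both arguments are equally short. For condition (2) you overcomplicate the witness: the identity $I_{2p}\in K$ already has nonsingular lower-right $(2p-r)\times(2p-r)$ block for \emph{every} $r<2p$, so no separate treatment of $r\geq p$ is needed --- this is exactly what the paper uses.
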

\begin{proof}
First we note  that condition (\ref{conditionSym}) for some $k$ is actually equivalent to the existence of a dense open subset $U\subset K$ such that (\ref{conditionSym})
holds for every $k\in U$.
Indeed, since the equality (\ref{conditionSym}) can be expressed in terms of nonzero determinants, if it is satisfied for one value of $k$, it will be satisfied for every $k$ in a dense open subset of $K$. 

 In addition, (\ref{conditionSym}) is equivalent to the fact that $a(e^X\,K\,e^Y)$ has non-empty interior which, in turn, implies that $a(e^{w_1\cdot X}\,K\,e^{w_2\cdot  Y})$ has non-empty interior for any given $w_1$, $w_2\in W$ and for every 
$k\in U_{w_1,w_2}$ where $ U_{w_1,w_2}$ is open and dense.  Hence, for any given $w_1$, $w_2\in W$, there is a dense open set $U_{w_1,w_2}$ with 
$V_{w_1\cdot  X}+\Ad(k)\,V_{w_2\cdot  Y}= \p$.  For similar reasons, there exists a dense open subset of $K$ such that the second condition is satisfied (the condition being satisfied by the identity matrix).  Given that a finite intersection of dense open sets is a dense open set, the statement follows.
\end{proof}
\begin{remark}\label{order}
Corollary \ref{VXVY} and the fact that $V_{w\cdot X}=\Ad(w)\,V_X$ for $w\in W$ and $X\in \a$ ( \cite[Lemma 3.3]{PGPS_Lie2010}) imply that in the proof of
Theorem A one can assume that $X$ has a configuration $[{\bf s};u]$ with $s_1\geq s_2\geq \ldots \geq s_r$ or a configuration $[{\bf s}]^-$
with $s_1\geq s_2\geq \ldots \geq s_{r-1}$. The same remark applies to the configuration of $Y$.
\end{remark}

The following necessary criterion for the existence of the density will be very useful:
\begin{cor}\label{p2}
If $m_{X,Y}$ is absolutely continuous then $|V_X|+|V_Y|\geq \dim\p=p^2$. 
\end{cor}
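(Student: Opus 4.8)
The plan is to deduce this from Corollary \ref{VXVY}. If $m_{X,Y}$ is absolutely continuous, then there exists $k\in K$ with $V_X+\Ad(k)\,V_Y=\p$. Since $\Ad(k)$ is an automorphism of $\p$ (because $K$ normalizes $\p$), we have $\dim \Ad(k)\,V_Y=\dim V_Y=|V_Y|$. The dimension of a sum of two subspaces is at most the sum of their dimensions, so
\begin{align*}
p^2=\dim\p=\dim\bigl(V_X+\Ad(k)\,V_Y\bigr)\leq \dim V_X+\dim \Ad(k)\,V_Y=|V_X|+|V_Y|.
\end{align*}
This gives the claimed inequality. The only small point to check is that $\dim\p=p^2$: from the description of $\p$ in (\ref{pp}), an element is determined by an arbitrary $p\times p$ matrix $B$, so $\dim\p=p^2$; alternatively this matches the count of basis elements $A_i$ together with the $Y_{i,j}$ and $Z_{i,j}$, namely $p+2\binom{p}{2}=p^2$.

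There is no real obstacle here: the statement is an immediate consequence of subadditivity of dimension under sums combined with the characterization in Corollary \ref{VXVY} and the invariance of dimension under the linear isomorphism $\Ad(k)|_\p$. The only thing worth stating carefully is why $\Ad(k)$ preserves $\p$ and hence preserves dimensions of subspaces of $\p$: this follows from the Cartan decomposition $\g=\k\oplus\p$ and the relation $\Ad(K)\p\subset\p$, which holds since $\theta$ is $\Ad(K)$-equivariant (as $\theta=\Ad(\text{a suitable element})$ or, more directly, $\theta(X)=-X^T$ is preserved by conjugation by $k\in K$ with $k^T=k^{-1}$).

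I would present the proof in two or three lines: invoke Corollary \ref{VXVY} to get $k$ with $V_X+\Ad(k)\,V_Y=\p$, note $\dim\Ad(k)\,V_Y=|V_Y|$, apply $\dim(U+V)\le\dim U+\dim V$, and record $\dim\p=p^2$. This is exactly the kind of cheap necessary condition that will later be used to rule out configurations $(X,Y)$ where $|V_X|+|V_Y|$ is too small, so keeping it short and self-contained is the right call.
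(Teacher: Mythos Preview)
Your proof is correct and follows exactly the approach the paper intends: the corollary is stated without proof because it is an immediate consequence of Corollary~\ref{VXVY} via subadditivity of dimension, together with the fact that $\Ad(k)$ is a linear isomorphism. Your additional remarks on why $\Ad(k)$ preserves $\p$ and on the computation $\dim\p=p^2$ are accurate and harmless, though the paper omits them as standard.
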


The following definition and results will be helpful in resolving the exceptional case indicated in (\ref{except4}).

\begin{definition}
For $n\geq 1$, let $\Z(n)$ be the group formed by the matrices of the form 
\begin{align*}
\left[
\begin{array}{rrcrr}
\cos\theta_1&-\sin\theta_1\\
\sin\theta_1&\cos\theta_1\\
&&\ddots\\
&&&\cos\theta_r&-\sin\theta_r\\
&&&\sin\theta_r&\cos\theta_r
\end{array}
\right]
\end{align*}
where the last block is replaced by $1$ is $n$ is odd.
\end{definition}

\begin{remark}
Note that $\dim \Z(n)\leq n/2$ and that each element in $\Z(n)$ has a square root in $\Z(n)$.
\end{remark}

\begin{lemma}\label{Zn}
Consider $n\geq2$ and $k\in\SO(n)$. Then there exists $A\in\SO(n)$ such that 
$A^{-1}\,k\,A\in \Z(n)$.
\end{lemma}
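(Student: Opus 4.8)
The plan is to prove this as a version of the real canonical form of an orthogonal matrix, obtained from the complex spectral decomposition of $k$ viewed as a normal operator.

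\textbf{Step 1 (spectral data).} Since $k\in\SO(n)\subset\O(n)$, the operator $k$ is real and normal, hence diagonalizable over $\C$ with eigenvalues on the unit circle. Its non-real eigenvalues come in conjugate pairs $e^{\pm i\theta}$ with $\theta\in(0,\pi)$, and its real eigenvalues are $\pm1$. Because $\det k=1$, the multiplicity $m_-$ of the eigenvalue $-1$ is even; consequently the multiplicity $m_+$ of $+1$ has the same parity as $n$, so $m_+\ge1$ when $n$ is odd.

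\textbf{Step 2 (real orthonormal adapted basis).} For each conjugate pair $e^{\pm i\theta}$, choose a Hermitian-orthonormal basis $v_1,\dots,v_m$ of the eigenspace $\ker(k-e^{i\theta}I)$; then the real vectors $\sqrt2\,\mathrm{Re}\,v_j,\ \sqrt2\,\mathrm{Im}\,v_j$ span $k$-invariant planes on each of which $k$ acts, in that ordered basis, as the $2\times2$ rotation block with angle $\pm\theta$. Since $k$ is normal, eigenspaces for distinct eigenvalues are orthogonal and $\overline{\ker(k-e^{i\theta}I)}=\ker(k-e^{-i\theta}I)$, so all these planes are mutually orthogonal and orthogonal to $\ker(k\mp I)$, and the real vectors just produced are orthonormal. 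Now take an orthonormal basis of $\ker(k+I)$ grouped into $m_-/2$ planes, on each of which $k=\mathrm{diag}(-1,-1)$ (the rotation block with angle $\pi$), and an orthonormal basis of $\ker(k-I)$ grouped into identity planes (rotation blocks with angle $0$), with one leftover unit vector — a trailing $1\times1$ block equal to $1$ — exactly when $n$ is odd. Assembling all these orthonormal vectors as the columns of a matrix $A_0$, with the leftover $+1$-vector placed last in the odd case, yields $A_0\in\O(n)$ with $A_0^{-1}\,k\,A_0\in\Z(n)$.

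\textbf{Step 3 (fixing the determinant).} If $\det A_0=1$ we are done. Otherwise, since $n\ge2$ there is at least one $2\times2$ rotation block; swap the two columns of $A_0$ spanning the corresponding plane. This changes that block of angle $\psi$ into the block of angle $-\psi$ (still an element of $\Z(n)$), leaves every other block unchanged, and flips $\det A_0$. The resulting matrix $A$ lies in $\SO(n)$ and satisfies $A^{-1}\,k\,A\in\Z(n)$.

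The step requiring the most care is Step 2: one must check that the scaled real and imaginary parts of Hermitian-orthonormal complex eigenvectors, together with orthonormal bases of the $\pm1$-eigenspaces, assemble into a single \emph{real} orthonormal basis, and that the parity bookkeeping of $m_\pm$ both makes the pairing into $2\times2$ blocks possible and produces the trailing $1$ precisely when $n$ is odd.
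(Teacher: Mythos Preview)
Your proof is correct and complete. The paper itself does not give a proof: it simply writes ``Consult for example \cite{Friedberg}'' and recalls that the eigenvalues of $k$ are $e^{\pm i\theta_j}$, treating the result as the standard real canonical form of a special orthogonal matrix. Your argument supplies exactly that textbook proof in full detail --- the spectral decomposition over $\C$, the passage to real invariant planes via real and imaginary parts of Hermitian-orthonormal eigenvectors, the parity bookkeeping on $m_{\pm}$, and the column swap to land in $\SO(n)$ --- so your approach is the same in spirit, just spelled out rather than cited.
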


\begin{proof}
Consult for example \cite{Friedberg}. Recall that the eigenvalues of $k$ are $e^{\pm i\theta_j}, \theta_j\in\R$.
\end{proof}

\begin{cor}\label{decompo}
Every matrix $\left[\begin{array}{cc}A_1&0\\0&A_2\end{array}\right]
\in \SO(p)\times \SO(p)$ can be written in the following format
\begin{align*}
\left[\begin{array}{cc}A_1&0\\0&A_2\end{array}\right]
&=\left[\begin{array}{cc}A&0\\0&A\end{array}\right]
\,\left[\begin{array}{cc}B&0\\0&B^{-1}\end{array}\right] \,\left[\begin{array}{cc}C&0\\0&C\end{array}\right] 
\end{align*}
with $A$, $C\in\SO(p)$ and $B\in \Z(p)$.
\end{cor}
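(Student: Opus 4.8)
The plan is to reduce the statement to the spectral normal form for the two orthogonal blocks $A_1$ and $A_2$ provided by Lemma \ref{Zn}, and then to absorb the conjugating matrices into the outer factors. First I would apply Lemma \ref{Zn} separately to $A_1$ and to $A_2$: there exist $P_1$, $P_2\in\SO(p)$ with $P_1^{-1}A_1P_1=Z_1\in\Z(p)$ and $P_2^{-1}A_2P_2=Z_2\in\Z(p)$. This already gives a factorization $\diag(A_1,A_2)=\diag(P_1,P_2)\,\diag(Z_1,Z_2)\,\diag(P_1^{-1},P_2^{-1})$, but the outer factors and the middle factor are not yet of the required shape (the outer factors should have the same block $A$ on the diagonal, and the middle factor should be $\diag(B,B^{-1})$ with a \emph{single} $B\in\Z(p)$). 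The rest of the argument is bookkeeping to fix this.

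The key observation is that an element $Z\in\Z(p)$ has a square root in $\Z(p)$ (as noted in the Remark after the definition of $\Z(n)$), and moreover its inverse is obtained by flipping the sign of each angle $\theta_j$, which is a conjugation by a fixed element $J\in\Z(p)$ only when we are in $\O(p)$; but more simply, since $\Z(p)$ is abelian and every element is a product of commuting rotation blocks, we can write $Z_1Z_2 = B^2$ for some $B\in\Z(p)$, because $\Z(p)$ is a torus (times possibly a trivial factor) and is divisible. So set $B\in\Z(p)$ with $B^2 = Z_1 Z_2$. Then one checks $\diag(Z_1,Z_2)=\diag(Z_1,I)\,\diag(I,Z_2) =\diag(B,B^{-1})\,\diag(B^{-1}Z_1,B\,Z_2)$, and since $B^{-1}Z_1 = B^{-1}Z_1$ and $BZ_2 = B Z_2$, using $B^2=Z_1Z_2$ we get $B Z_2 = B^{-1}Z_1Z_2\,Z_2^{-1}\cdot Z_2 = B^{-1}Z_1$, so the second factor is $\diag(W,W)$ with $W=B^{-1}Z_1\in\Z(p)\subset\SO(p)$. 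This realizes $\diag(Z_1,Z_2)$ in the desired format $\diag(B,B^{-1})\,\diag(W,W)$ with $B\in\Z(p)$; after relabeling we merge $\diag(W,W)$ into the right-hand $\diag(C,C)$ factor and $\diag(P_1,P_2)$ is still not of the form $\diag(A,A)$ — which is the genuine obstacle, since $P_1\neq P_2$ in general.

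To repair the left factor I would instead be more careful about \emph{how} the normal form is realized: given any two matrices in $\SO(p)$ conjugate to elements of $\Z(p)$, the conjugating matrix can be chosen within a fixed one; concretely, write $\diag(A_1,A_2) = \diag(A_1A_2^{-1},I)\,\diag(A_2,A_2)$, apply Lemma \ref{Zn} to $A_1A_2^{-1}\in\SO(p)$ to get $A_1A_2^{-1} = Q\,Z\,Q^{-1}$ with $Z\in\Z(p)$, and then use the square root $R\in\Z(p)$ with $R^2=Z$ to write $\diag(Z,I)=\diag(R,R^{-1})\,\diag(R,R)$. Collecting, $\diag(A_1,A_2)=\diag(Q,Q)\,\diag(R,R^{-1})\,\diag(R,R)\,\diag(Q^{-1},Q^{-1})\,\diag(A_2,A_2)$, and since $\diag(R,R^{-1})$ does not commute with $\diag(Q,Q)$ we must push $\diag(Q,Q)$ through it — but $\diag(Q,Q)\diag(R,R^{-1}) = \diag(QRQ^{-1},QR^{-1}Q^{-1})\diag(Q,Q)$ and $QRQ^{-1}$ need not lie in $\Z(p)$. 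The clean fix, which I expect to be the heart of the proof, is therefore to replace $\Z(p)$ in the statement of Lemma \ref{Zn} by the principal torus and observe that conjugating $A_1A_2^{-1}$ into $\Z(p)$ by $Q$ lets us set $A := Q R$ — no, rather: set $B := R\in\Z(p)$, $A := Q\in\SO(p)$, and $C := A_2$, giving $\diag(A,A)\diag(B,B^{-1})\diag(C,C)$ provided $\diag(A,A)\diag(B,B^{-1})\diag(C,C)$ equals $\diag(QR C, QR^{-1}C)$; matching this against $\diag(A_1,A_2)$ forces $QRC = A_1$ and $QR^{-1}C=A_2$, hence $A_1A_2^{-1} = QR^2Q^{-1}=QZQ^{-1}$, which is exactly how $Q,Z$ were chosen, and $C = R^{-1}Q^{-1}A_2 \cdot$ — consistency then determines $C\in\SO(p)$ uniquely and one verifies $QRC=A_1$ automatically. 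Thus the decomposition exists with $A=Q\in\SO(p)$, $B=R\in\Z(p)$, $C=R^{-1}Q^{-1}A_2\in\SO(p)$, and the only nontrivial inputs are Lemma \ref{Zn} and the divisibility of $\Z(p)$. I would present it in exactly this order: first the reduction to $A_1A_2^{-1}$, then the normal form and square root, then the explicit identification of $A$, $B$, $C$ and a one-line check.
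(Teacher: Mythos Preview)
Your final approach is exactly the paper's: conjugate $A_1A_2^{-1}$ into $\Z(p)$ via Lemma \ref{Zn}, take a square root in $\Z(p)$, and read off $A$, $B$, $C$. The paper does this in four lines without the exploratory detour through separate normal forms for $A_1$ and $A_2$.

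There is, however, a slip in your concluding formula. With $A=Q$, $B=R$ and your choice $C=R^{-1}Q^{-1}A_2$ one gets $ABC=Q\,R\,R^{-1}Q^{-1}A_2=A_2$, not $A_1$. The correct choice, which you had in fact already derived two lines earlier from the constraint $QRC=A_1$, is $C=R^{-1}Q^{-1}A_1$ (equivalently $C=R\,Q^{-1}A_2$, using $R^2=Q^{-1}A_1A_2^{-1}Q$). This is precisely the paper's $C=B^{-1}A^{-1}A_1$. With that correction the verification $AB^{-1}C=Q\,R^{-2}Q^{-1}A_1=(A_1A_2^{-1})^{-1}A_1=A_2$ goes through, and your argument coincides with the paper's line for line.
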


\begin{proof}
According to Lemma \ref{Zn}, there exists a matrix $A\in\SO(p)$ such that
$B'=A^{-1}\,A_1\,A_2^{-1}\,A\in\Z(p)$. Pick $B\in\Z(p)$ such that
$B^2=B'$ and let $C=B^{-1}\,A^{-1}\,A_1$. Then $A\,B\,C=A_1$ and 
\begin{align*}
A\,B^{-1}\,C&=A\,B^{-1}\,(B^{-1}\,A^{-1}\,A_1)
=A\,(B^2)^{-1}\,A^{-1}\,A_1\\
&=A\,(B')^{-1}\,A^{-1}\,A_1
=A\,(A^{-1}\,A_1\,A_2^{-1}\,A)^{-1}\,A^{-1}\,A_1
=A_2
\end{align*}
which proves the lemma.
\end{proof}

\begin{remark}
The matrices $\left[\begin{array}{cc}B&0\\0&B^{-1}\end{array}\right]$ in the last corollary can be written as 
$\prod_{i=1}^{[p/2]}\,k^{t_i}_{Z^+_{2\,i-1,2\,i}}$ for an appropriate choice of $t_i$'s.
\end{remark}

In the proof of the necessity of the eligibility condition,
we will use the following result stated in \cite[Step 1, page 1767]{PGPS_Fun2010}.
Let the Cartan decomposition of $\SL(N,\F)$ be written as $g=k_1 e^{ \tilde
a(g)}k_2.$
\begin{lemma}\label{repeat}
Let $U=\diag([\overbrace{u_0,\dots,u_0}^r,u_1,\dots,u_{N-r}]$
and $V=\diag([\overbrace{v_0,\dots,v_0}^{N-s},v_1,\dots,v_s]$. where
$s+1\leq r<N$, $s\geq 1$, and the
$u_i$'s and $v_j$'s are arbitrary. 
Then each element of the diagonal of
$ \tilde
a(e^U\,\SU(N,\F)\,e^V)
$
has at least $r-s$ entries equal to $u_0+v_0$.
\end{lemma}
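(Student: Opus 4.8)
The final statement to prove is Lemma \ref{repeat}, which describes the diagonal entries of the Cartan projection $\tilde a(e^U\,\SU(N,\F)\,e^V)$ when $U$ and $V$ have blocks of repeated eigenvalues.

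My plan is to work directly with singular values, since $\tilde a(g)$ records (the logarithms of) the singular values of $g$, i.e. the square roots of the eigenvalues of $g^\ast g$. For $g = e^U\,k\,e^V$ with $k \in \SU(N,\F)$, I want to show that no matter how $k$ is chosen, at least $r-s$ of the singular values of $g$ coincide with $e^{u_0+v_0}$. The key structural observation is that $e^U$ acts as the scalar $e^{u_0}$ on an $r$-dimensional coordinate subspace $P$ (the span of the first $r$ basis vectors), and $e^V$ acts as the scalar $e^{v_0}$ on an $(N-s)$-dimensional coordinate subspace $Q$ (the span of the first $N-s$ basis vectors). Since $r + (N-s) = N + (r-s) > N$, the subspace $Q$ has the property that $k(Q)$ is an $(N-s)$-dimensional subspace, so $k(Q) \cap P$ has dimension at least $(N-s) + r - N = r-s$.

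The main steps are then: (1) let $W = k(Q) \cap P$, a subspace of dimension $\geq r-s$; (2) for any vector $x \in k^{-1}(W) \subseteq Q$, we have $e^V x = e^{v_0} x$ since $x \in Q$, and $k(e^V x) = e^{v_0} k x \in W \subseteq P$, so $e^U(k e^V x) = e^{u_0} k e^V x$; hence $g$ maps the $(r-s)$-dimensional subspace $k^{-1}(W)$ isometrically-up-to-scalar onto $W$, scaling norms by exactly $e^{u_0+v_0}$. (3) Conclude via the min-max (Courant–Fischer / Weyl) characterization of singular values: since $g$ scales every nonzero vector of an $(r-s)$-dimensional subspace by exactly the factor $e^{u_0+v_0}$, at least $r-s$ singular values are $\geq e^{u_0+v_0}$ (using the subspace on the source side) and at least $r-s$ are $\leq e^{u_0+v_0}$ (using the image subspace $W$ on the target side, together with $g^{-1}$ or the dual min-max). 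Combining these two inequalities forces at least $r-s$ singular values to equal $e^{u_0+v_0}$ exactly, which is the claim.

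The step I expect to be the main obstacle is making the min-max argument fully rigorous over the division algebras $\F = \R, \C, \H$ simultaneously, since the quaternionic case requires a little care (one works with right $\H$-modules and the appropriate notion of $\H$-linear singular value decomposition), but the dimension count $\dim_\F W \geq r - s$ and the Courant–Fischer inequalities go through verbatim once one fixes conventions. A cleaner alternative that sidesteps some of this is to argue by a continuity/perturbation or interlacing argument: embed the statement in $\SL(N,\C)$ or $\SL(2N,\R)$ respectively and apply the real/complex case, or cite the interlacing inequalities for singular values of products directly. I would also double-check the edge hypotheses $s+1 \leq r < N$ and $s \geq 1$ are exactly what is needed for the dimension count $r - s \geq 1$ to be nontrivial and for both subspaces to be proper.
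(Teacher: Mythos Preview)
The paper does not prove Lemma \ref{repeat}; it is quoted from \cite[Step 1, p.~1767]{PGPS_Fun2010}. So there is no in-paper argument to compare against, and I will assess your proposal on its own.

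Your Steps 1 and 2 are correct and are exactly the right idea: with $S_0 = k^{-1}(W) \subseteq Q$ of $\F$-dimension at least $r-s$, one has $g(x) = e^{u_0+v_0}\,k(x)$ for every $x \in S_0$.

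Step 3, however, has a genuine gap as written. Knowing only that $\|g x\| = c\|x\|$ for all $x$ in a $d$-dimensional subspace does \emph{not} force $d$ singular values of $g$ to equal $c$. For a counterexample in $\R^2$, take $g = \diag[2,1/2]$; the vector $x = (1,2)/\sqrt{5}$ satisfies $\|gx\| = \|x\| = 1$, yet neither singular value of $g$ equals $1$. The Courant--Fischer inequalities you invoke yield only $\sigma_d(g) \ge c$ and $\sigma_{N-d+1}(g) \le c$, which pins down at most $\max(2d-N,0)$ singular values, not $d$ of them.

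The fix is immediate from your own Step 2, which actually proves more than a norm identity: it shows $g|_{S_0} = c\cdot k|_{S_0}$. Running the same computation for $g^* = e^V k^{-1} e^U$ gives $g^*|_{W} = c\cdot k^{-1}|_{W}$, and hence for $x \in S_0$,
\[
g^* g(x) \;=\; g^*\bigl(c\,k(x)\bigr) \;=\; c\cdot c\,k^{-1}\bigl(k(x)\bigr) \;=\; c^2 x.
\]
Thus $S_0$ lies inside the $c^2$-eigenspace of the self-adjoint operator $g^* g$, which immediately gives at least $r-s$ singular values equal to $c = e^{u_0+v_0}$, with no appeal to min--max. This version works verbatim over $\F=\R,\C,\H$ (over $\H$ one uses that $g^*g$ is $\H$-linear, self-adjoint and positive, hence diagonalizable over $\R$), so your concern about the quaternionic case disappears once the argument is phrased this way.
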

We will use Lemma \ref{repeat} with $N=p+q$ in the proofs of Proposition \ref{ness} and Theorem
\ref{*power}.\\

In the proof of Theorem A we will need the following technical
lemma from \cite{PGPS_2013}:
\begin{lemma}\label{calculs}~
\begin{enumerate}
\item For the root vectors $Z_{i,j}^+$ and $Y_{i,j}^+$, we have
\begin{align*}
\Ad(e^{t\,(Y_{i,j}^++\theta(Y_{i,j}^+))})(Y_{i,j}) &=\cos(4\,t)\,Y_{i,j}
+2\,\sin(4\,t)\,(A_i-A_j),\\
\Ad(e^{t\,(Z_{i,j}^+ +\theta(Z_{i,j}^+))})(Z_{i,j}) &=\cos(4\,t)\,Z_{i,j}
+2\,\sin(4\,t)\,(A_i+A_j).
\end{align*}
\item The operators 
$\Ad(e^{t\,(Y_{i,j}^++\theta(Y_{i,j}^+))})$ and $\Ad(e^{t\,(Z_{i,j}^++\theta(Z_{i,j}^+))})$ 
applied to the other symmetrized
root vectors do not produce any components in $\a$. 
\end{enumerate}
\end{lemma}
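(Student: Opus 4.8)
The plan is to pass from the one-parameter groups $e^{t(Y_{i,j}^+ +\theta(Y_{i,j}^+))}$ and $e^{t(Z_{i,j}^+ +\theta(Z_{i,j}^+))}$ to their infinitesimal generators $K:=X_\alpha+\theta(X_\alpha)\in\k$ (with $X_\alpha=Y_{i,j}^+$ resp.\ $Z_{i,j}^+$), to decompose $\p$ into $\ad(K)$-invariant subspaces, and to exponentiate $\ad(K)$ on each of them separately. Since $K\in\k$, the operator $\ad(K)$ maps $\p$ into $\p$ and is skew-symmetric for the (positive definite) trace form on $\p$, so such a decomposition into irreducibles exists; the whole content of the lemma is then the identification of the single nontrivial piece and the exponentiation on it.

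For part (1), take $X_\alpha=Y_{i,j}^+$ (root $\alpha(H)=H_i-H_j$). Then $K=X_\alpha+\theta(X_\alpha)$ is exactly twice the skew part of $X_\alpha$, namely $K=2(E_{i,j}-E_{j,i}+E_{p+i,p+j}-E_{p+j,p+i})$, while $Y_{i,j}=X_\alpha^s$ is the symmetric part of $X_\alpha$. Using only the multiplication rule $E_{a,b}E_{c,d}=\delta_{b,c}E_{a,d}$ one checks directly that $[K,Y_{i,j}]$ is a multiple of $A_i-A_j$ and $[K,A_i-A_j]$ is a multiple of $Y_{i,j}$, the two constants being opposite in sign and equal in modulus. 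Hence $\mathrm{span}\{Y_{i,j},A_i-A_j\}$ is $\ad(K)$-stable, $\ad(K)$ restricts there to (a multiple of) a rotation generator, and $\Ad(e^{tK})$ restricted to this plane is the corresponding rotation; reading off the $Y_{i,j}$- and $(A_i-A_j)$-components gives the first identity. The case $X_\alpha=Z_{i,j}^+$ (root $H_i+H_j$) is identical after replacing $Y_{i,j}$ by $Z_{i,j}$ and $A_i-A_j$ by $A_i+A_j$, the only effect being sign changes in the rows and columns indexed by $p+i$ and $p+j$.

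For part (2) it suffices to exhibit, for each symmetrized root vector $X_\beta^s$ other than the one treated in (1), an $\ad(K)$-invariant subspace of $\p$ that contains $X_\beta^s$ and meets $\a$ trivially; then $\Ad(e^{tK})X_\beta^s$ stays in that subspace and has no $\a$-component. Three cases occur, all settled by the same bracket computation. First, if the index pair of $\beta$ is disjoint from $\{i,j\}$, then $K$ and $X_\beta^s$ involve disjoint matrix indices, so they commute and the line $\R X_\beta^s$ works. Second, if the index pairs overlap in exactly one index, then $\ad(K)$ sends $Y_{i,l}$ to $\pm Y_{j,l}$ (resp.\ $Z_{i,l}$ to $\pm Z_{j,l}$), so the plane they span is invariant and still disjoint from $\a$. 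Third, $[K,Z_{i,j}]=0$ when $K$ is built from $Y_{i,j}^+$ (and $[K,Y_{i,j}]=0$ when $K$ is built from $Z_{i,j}^+$), because the symmetric and skew parts of $Z_{i,j}^+$ and $Y_{i,j}^+$ are mutually orthogonal in the obvious sense; then $\R Z_{i,j}$ (resp.\ $\R Y_{i,j}$) works.

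The one point requiring care is the overlap case of part (2): one must verify that $[K,Y_{i,l}]$ (and $[K,Z_{i,l}]$) again produces only off-diagonal entries of the $B$-block, that is, a combination of the $Y_{k,l}$ and $Z_{k,l}$ and never a diagonal term lying in $\a$, and one must identify precisely which root vector it is proportional to. This is a routine index chase with $E_{a,b}E_{c,d}=\delta_{b,c}E_{a,d}$; once the distinguished plane of part (1) has been pinned down, all the remaining cases are forced.
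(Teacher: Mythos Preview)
The paper does not prove this lemma at all; it is quoted verbatim from \cite{PGPS_2013} as a ``technical lemma'' and the paper only supplies the companion Lemma~\ref{CasesWithoutPred}. So there is no in-paper proof to compare against, and your direct argument via $\ad(K)$-invariant planes in $\p$ is a perfectly natural way to establish it. The strategy---identify the single two-dimensional $\ad(K)$-invariant subspace meeting $\a$ (namely $\mathrm{span}\{Y_{i,j},A_i-A_j\}$, resp.\ $\mathrm{span}\{Z_{i,j},A_i+A_j\}$), exponentiate the rotation generator there, and show every other symmetrized root vector sits in an invariant subspace disjoint from $\a$---is exactly right, and the disjoint/same-index cases are handled correctly.

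One small slip in your overlap case: when $K=Z_{i,j}^++\theta(Z_{i,j}^+)$, the bracket does \emph{not} preserve the $Y$/$Z$ type. As the paper's own Lemma~\ref{CasesWithoutPred} records, $[Z_{i,j}^++\theta(Z_{i,j}^+),Z_{i,l}]$ is proportional to $Y_{\min(j,l),\max(j,l)}$, not to $Z_{j,l}$; dually $[K,Y_{i,l}]$ lands in the $Z$-span. In root terms, with $\alpha=H_i+H_j$ and $\beta=H_i-H_l$, the surviving combinations $\pm(\alpha-\beta)=\pm(H_j+H_l)$ are $Z$-type roots. This does not damage your conclusion: the image is still a symmetrized root vector with index pair $\{j,l\}\neq\{i,j\}$ and hence lies outside $\a$, and the correct invariant two-plane is $\mathrm{span}\{Z_{i,l},Y_{j,l}\}$ (resp.\ $\mathrm{span}\{Y_{i,l},Z_{j,l}\}$) rather than the one you named. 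With that correction your proof of part~(2) goes through unchanged.
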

In the proof of the existence of the density for the pairs $X[4],Y[2,2]^-$ and $X[5], Y[3;2]$
without predecessors, we will need the following elementary lemma. Recall that $ Z_{k,l}=\frac12( Z_{k,l}^+ -\theta(Z_{k,l}^+) )\in \p$.
\begin{lemma}\label{CasesWithoutPred}
Let $1\le i <j\le p$ and  $1\le k <l\le p$. We have
\begin{align*}
[Z_{i,j}^+ +\theta(Z_{i,j}^+),\, Z_{k,l}]= & 
\left\lbrace
\begin{array}{ll}
0  &{\rm if}\ \{i,j\}\cap \{k,l\}=\emptyset,\\
4\,(A_i+A_j)   &{\rm if}\ \{i,j\}= \{k,l\},\\
2\,Y_{\min(j,l),\max(j,l)}   &{\rm if}\ i=k,\, j\not= l,\\
2\,Y_{\min(i,k),\max(i,k)}   &{\rm if}\ i\not=k,\, j= l,\\
-2\,Y_{i,l}     &{\rm if}\ j=k,\\
-2\,Y_{k,j}     &{\rm if}\ i=l.
\end{array}\right.
\end{align*}
\end{lemma}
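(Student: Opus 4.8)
The plan is to prove Lemma \ref{CasesWithoutPred} by direct computation, expanding everything in terms of the elementary matrices $E_{ab}$, since both $Z_{i,j}^+ + \theta(Z_{i,j}^+)$ and $Z_{k,l}$ have explicit descriptions in terms of these. First I would record the two matrices explicitly. From Table \ref{X} and the relation $\theta(X) = -X^T$, the element $Z_{i,j}^+ + \theta(Z_{i,j}^+)$ lies in $\k$; using $Z_{i,j}^+ = (E_{i,j}-E_{j,i}-E_{p+i,p+j}+E_{p+j,p+i}) - (E_{i,p+j}+E_{p+j,i}) + E_{j,p+i}+E_{p+i,j}$, its symmetric part is killed and one is left with the skew-symmetric combination $Z_{i,j}^+ + \theta(Z_{i,j}^+) = 2(E_{i,j}-E_{j,i}) - 2(E_{p+i,p+j}-E_{p+j,p+i})$ (the mixed blocks, being symmetric, cancel). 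Similarly, $Z_{k,l} = \frac12(Z_{k,l}^+ - \theta(Z_{k,l}^+))$ is the symmetric part, namely $Z_{k,l} = -E_{k,p+l} + E_{l,p+k} - E_{p+l,k} + E_{p+k,l}$, as already recorded in the excerpt just before \eqref{pp}-style definitions. (I should double-check the sign conventions against the displayed formula $Z_{i,j} = -E_{i,p+j}+E_{j,p+i}-E_{p+j,i}+E_{p+i,j}$ given in the paper.)

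Next I would compute the commutator $[\,2(E_{i,j}-E_{j,i}) - 2(E_{p+i,p+j}-E_{p+j,p+i})\,,\ -E_{k,p+l} + E_{l,p+k} - E_{p+l,k} + E_{p+k,l}\,]$ using the single rule $[E_{ab},E_{cd}] = \delta_{bc}E_{ad} - \delta_{da}E_{cb}$. The first factor only has nonzero entries in the ``top-left'' and ``bottom-right'' diagonal $p\times p$ blocks, indexed by $\{i,j\}$ and $\{p+i,p+j\}$; the second factor only has entries in the ``top-right'' and ``bottom-left'' off-diagonal blocks. So a term $[E_{ab},E_{cd}]$ survives only when $b \in \{k,l\}\cup\{p+k,p+l\}$ matches the row index of a block-entry of $Z_{k,l}$ appropriately, and this forces the index overlaps $\{i,j\}\cap\{k,l\}$ to dictate which terms appear. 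I would organize the calculation into the six cases of the statement according to $|\{i,j\}\cap\{k,l\}|$ and which coordinates coincide, in each case collecting the surviving terms and recognizing the result as $0$, as $4(A_i+A_j) = 4(E_{i,p+i}+E_{p+i,i}+E_{j,p+j}+E_{p+j,j})$, or as $\pm 2 Y_{\cdot,\cdot}$ where $Y_{a,b} = E_{a,p+b}+E_{b,p+a}+E_{p+b,a}+E_{p+a,b}$. The case $\{i,j\} = \{k,l\}$ should land in $\a$ (a sum of $A_i$'s) because both factors then involve only the indices $\{i,j,p+i,p+j\}$ and their commutator picks out the diagonal ``mixed'' directions; the mixed cases (exactly one index shared) should produce a single $Y$-vector, with the sign determined by whether the shared index is the ``smaller'' or ``larger'' one and whether it sits in the $E_{i,j}$ or $E_{j,i}$ slot, which is exactly what the $\min$/$\max$ bookkeeping in the statement encodes.

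The main obstacle is purely bookkeeping: getting every sign right across all six cases, and in particular tracking how the two halves $2(E_{i,j}-E_{j,i})$ and $-2(E_{p+i,p+j}-E_{p+j,p+i})$ contribute with relative signs, since the $Y$-vectors are sign-definite combinations while $Z_{k,l}$ and the $\k$-element each carry their own signs. I would handle this by first doing one representative subcase carefully in full (say $j = k$, expecting $-2Y_{i,l}$), verifying that the two halves of the $\k$-element contribute the four terms of $Y_{i,l}$ with a common factor $-2$, and then arguing the remaining cases are obtained by the obvious index relabelling and the antisymmetry $[X,Z_{k,l}]$ under swapping the roles, so that only the diagonal case $\{i,j\}=\{k,l\}$ needs a genuinely separate short computation. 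No deeper result is needed: Lemma \ref{calculs} is of the same flavor and presumably proved the same way in \cite{PGPS_2013}, so this is a routine (if error-prone) matrix computation, and the cleanest writeup is simply to state ``a direct computation using $[E_{ab},E_{cd}] = \delta_{bc}E_{ad} - \delta_{ad}E_{cb}$ gives the result'' after exhibiting the two matrices explicitly.
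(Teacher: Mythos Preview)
Your approach is correct, but it differs from the paper's. The paper organizes the computation around the root-space bracket relation $[\g_\alpha,\g_\beta]\subset\g_{\alpha+\beta}$ (and $=0$ if $\alpha+\beta$ is not a root): since $Z_{i,j}^+ +\theta(Z_{i,j}^+)\in\g_{H_i+H_j}\oplus\g_{-(H_i+H_j)}$ and $Z_{k,l}\in\g_{H_k+H_l}\oplus\g_{-(H_k+H_l)}$, one reads off immediately that the bracket vanishes when $\{i,j\}\cap\{k,l\}=\emptyset$, lands in $\a$ when $\{i,j\}=\{k,l\}$, and lands in the span of a single $Y_{a,b}$ in the overlap-one cases (because $(H_i+H_j)-(H_k+H_l)$ is then a root of the form $H_a-H_b$). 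Only the scalar in front then needs to be checked against Table~\ref{X}. Your route --- writing both elements in the $E_{ab}$ basis and applying $[E_{ab},E_{cd}]=\delta_{bc}E_{ad}-\delta_{ad}E_{cb}$ --- is more elementary and self-contained, but puts all the work into sign bookkeeping, whereas the paper's structural argument explains \emph{a priori} why each case produces the type of vector it does and reduces the verification to a single coefficient. Your explicit identification $Z_{i,j}^+ +\theta(Z_{i,j}^+)=2(E_{i,j}-E_{j,i})-2(E_{p+i,p+j}-E_{p+j,p+i})$ is correct and makes your computation go through.
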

\begin{proof}
 We apply the well known fact that $[\g_\alpha, \g_\beta]\subset \g_{\alpha+\beta}$
when $\alpha+\beta$ is a root and  $[\g_\alpha, \g_\beta]=0$ otherwise. For the computation of exact coefficients in the formulas,
we use Table 1.  
\end{proof}

\section{Necessity of the eligibility condition}\label{NSSE}
\begin{prop}\label{7cases}
If $X=X[{\bf s};u]$ and $Y=Y[{\bf t};v]\in\a$ with $u\leq 1$ and $v\leq 1$ are such that
$\max{\bf s}+\max{\bf t}>2\,p-2$ then $|V_X|+|V_Y|<p^2$.
\end{prop}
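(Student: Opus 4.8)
The plan is to compute $|V_X|$ for an element $X$ with configuration $[\mathbf{s};u]$, $u\le 1$, directly from the root data in Table~\ref{X}, and then to bound $|V_X|+|V_Y|$ from above under the hypothesis $\max\mathbf{s}+\max\mathbf{t}>2p-2$. Recall that $|V_X|$ equals the number of positive roots $\alpha$ (of the form $H_i-H_j$ or $H_i+H_j$, $i<j$) with $\alpha(X)\ne 0$; equivalently, $|V_X| = \binom{p}{2}\cdot 2 - (\text{number of positive roots vanishing on } X)$, since there are $2\binom{p}{2}=p(p-1)$ positive roots in $D_p$. So I would work instead with $N(X):=\#\{\text{positive roots } \alpha : \alpha(X)=0\}$, and the claim $|V_X|+|V_Y|<p^2$ becomes $N(X)+N(Y) > p(p-1)-p^2 = -p$, i.e.\ $N(X)+N(Y)\ge 1-(-p)$... more precisely $|V_X|+|V_Y| = 2p(p-1) - N(X) - N(Y)$, so the inequality to prove is $N(X)+N(Y) > 2p(p-1)-p^2 = p^2-2p = p(p-2)$.

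First I would establish a clean formula for $N(X)$ in terms of the configuration. If $X=X[\mathbf{s};u]$ with blocks of sizes $s_1,\dots,s_r$ carrying distinct nonzero values $x_1,\dots,x_r$ and $u\in\{0,1\}$ zeros: a root $H_i-H_j$ vanishes iff $i,j$ lie in the same block; a root $H_i+H_j$ vanishes iff $i,j$ lie in the zero-block (only possible if $u\ge 2$, so never here) — wait, also $H_i+H_j$ can vanish if one is a zero entry and... no, $H_i+H_j=0$ needs $H_i=-H_j$, impossible for two entries of the same sign unless both zero. Since $u\le 1$, among the blocks the only vanishing roots are the $H_i-H_j$ with $i,j$ in a common block, giving $N(X)=\sum_{k=1}^r \binom{s_k}{2}$ (the single zero entry, if present, contributes nothing as a block of size $\le 1$). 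For configuration $[\mathbf{s}]^-$ with a $\pm x_r$ pair: additionally $H_i+H_j$ vanishes for the two indices carrying $x_r,-x_r$, adding $1$; and these two indices do not lie in a common $H_i-H_j$-block. So $N(X)=\sum_{k<r}\binom{s_k}{2}+\binom{s_r}{2}\cdot[\text{if the }\pm\text{ pair sits inside an }s_r\text{-block}]$ — I will just handle the two shapes (\ref{nil}) and (\ref{prob}) of Definition~\ref{defConfig} separately but the upshot is $N(X)\ge \sum_k\binom{s_k}{2}$, with the dominant term $\binom{\max\mathbf{s}}{2}$.

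Now set $a=\max\mathbf{s}$, $b=\max\mathbf{t}$, with $a+b\ge 2p-1$ and $1\le a,b\le p$ (using $u,v\le 1$, so $a\le p$, and by Lemma~\ref{relatives}(1) relatives don't change things). The key estimate is $N(X)\ge \binom{a}{2}$, $N(Y)\ge\binom{b}{2}$, and I must show $\binom{a}{2}+\binom{b}{2} > p(p-2)$ whenever $a+b\ge 2p-1$. Writing $a=p-i$, $b=p-j$ with $i+j\le 1$ (so $(i,j)\in\{(0,0),(0,1),(1,0)\}$) handles it: for $(0,0)$, $2\binom{p}{2}=p(p-1)>p(p-2)$; for $(1,0)$ or $(0,1)$, $\binom{p-1}{2}+\binom{p}{2} = \tfrac{(p-1)(p-2)}2+\tfrac{p(p-1)}2 = (p-1)^2 > p(p-2)=p^2-2p$ since $(p-1)^2=p^2-2p+1$. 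This is a strict inequality by exactly $1$ in the tight case, which is reassuring and matches the sharpness of condition~(\ref{cas1}). The main obstacle — really the only place needing care — is the bookkeeping for the configurations $[\mathbf{s}]^-$ and $[\mathbf{s}]$ of types (\ref{nil})–(\ref{prob}): one must check that passing to these "signed" shapes only adds to $N(X)$ relative to the pure block count $\sum\binom{s_k}{2}$, so that the bound $N(X)\ge\binom{a}{2}$ is never weakened; once that is verified the numerical inequality above closes the proof. I would also remark that the hypothesis $u,v\le1$ is used precisely to guarantee $a,b\le p$ and to exclude extra vanishing $H_i+H_j$ roots from a large zero-block, which is exactly the phenomenon that forces the different bound (\ref{cas2}) when $u\ge 2$ or $v\ge 2$.
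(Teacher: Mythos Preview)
Your argument is correct. The reduction to showing $N(X)+N(Y)>p(p-2)$ via $N(X)\ge\binom{\max\mathbf{s}}{2}$ works, and the constraint $a+b\ge 2p-1$ with $a,b\le p$ indeed forces $(a,b)\in\{(p,p),(p,p-1),(p-1,p)\}$, after which your arithmetic is clean. For the bookkeeping you flag as the ``main obstacle'': in configuration $[\mathbf{s}]^-$ one actually has \emph{equality} $N(X)=\sum_k\binom{s_k}{2}$, since the $s_r-1$ vanishing roots $H_i+H_p$ (one for each position $i$ carrying $+x_r$) together with the $\binom{s_r-1}{2}$ vanishing roots $H_i-H_j$ among those positions give $\binom{s_r-1}{2}+(s_r-1)=\binom{s_r}{2}$; your ``adding $1$'' undercounts this, but your stated upshot $N(X)\ge\sum_k\binom{s_k}{2}$ is correct.

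The paper takes a more concrete route: rather than bounding $N(X)$ by a formula, it observes directly that $\max\mathbf{s}\ge\max\mathbf{t}$ and $\max\mathbf{s}+\max\mathbf{t}\ge 2p-1$ force $\max\mathbf{s}=p$ and $\max\mathbf{t}\ge p-1$, then simply lists the handful of admissible configuration pairs (up to relatives, just $\{[p],[p]\}$, $\{[p],[p-1,1]\}$, $\{[p],[p]^-\}$, $\{[p],[1,p-1]^-\}$) and computes $|V_X|,|V_Y|$ for each, checking $|V_X|+|V_Y|\le p^2-1$. Your approach is more uniform and avoids the case enumeration; the paper's is shorter to write down since the list is so small and the values of $|V_X|$ are immediate.
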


\begin{proof}
Assume that $X=X[{\bf s};u]$ and $Y=Y[{\bf t};v]\in\overline{\a}^+$. Without loss of generality, assume that $\max{\bf s}\geq \max{\bf t}$.  
We then have $\max {\bf s}=p$ and $\max{\bf t}\geq p-1$.  The only possible pairs are 
\begin{align}
&X[p], Y[p]\ \hbox{and the  relative pair}\  X'[p]^-,  Y'[p]^- \nonumber\\
&X[p], Y[p-1,1]\ \hbox{and the  relative pair}\  X'[p]^-,  Y'[1,p-1]^-\label{BAD}\\
&X[p], Y[p]^-\ \hbox{and the  relative pair }\ ( X', Y')= (Y,X)\nonumber\\\
&X[p], Y[1,p-1]^- \ \hbox{and the relative pair}\  X'[p]^-, Y'[p-1,1]. \nonumber\
\end{align}
By Remark \ref{order} we do not need to consider the configuration $[1,p-1]$.  We have $|V_X|=\frac{p(p-1)}{2}$ and $|V_{Y[p-1,1]}|=\frac{p(p-1)}{2} +p-1$.
We apply Lemma \ref{relatives} and we find by examination that $|V_X|+|V_Y|\leq p^2-1$ in all cases.
\end{proof}

\begin{cor}\label{uv} 
Let $p\ge 2$. 
Consider a pair $X=X[{\bf s};u]$ and $Y=Y[{\bf t};v]$ with $u\leq1$ and $v\leq1$. Then
$|V_X|+|V_Y|\ge p^2$ if and only if
\begin{align}\label{2p-2}
\max({\bf s}) + \max({\bf t}) \le 2\,p-2.
\end{align}
\end{cor}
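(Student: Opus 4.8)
The plan is to deduce the corollary directly from Proposition \ref{7cases} and an explicit computation of $|V_X|$ in terms of the configuration. First I would record the general formula: if $X=X[{\bf s};u]$ with ${\bf s}=(s_1,\dots,s_r)$, then $|V_X|$ equals the number of positive roots $H_i\pm H_j$ ($i<j$) not vanishing on $X$. A root $H_i - H_j$ vanishes exactly when $X_i = X_j$, i.e. when indices $i,j$ lie in the same constant block; a root $H_i+H_j$ vanishes exactly when $X_i = -X_j$, which (in $\overline{\a^+}$, after using Lemma \ref{relatives}(4)–(5) to reduce to the case of no negative entries, except for a possible single $-x_r$ arising from a $[{\bf s}]^-$ configuration) happens only when $X_i = X_j = 0$ or when the two entries are the opposite pair $x_r,-x_r$. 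Collecting these, one gets a closed expression $|V_X| = \binom{p}{2} + |V_X| - \binom{p}{2}$ where the excess over $\binom{p}{2}$ depends only on the block sizes and on $u$ (resp. on the $[{\bf s}]^-$ correction), and in particular $|V_X|$ is maximized, for fixed $p$, precisely when $\max({\bf s})$ is as small as possible. This monotonicity is the conceptual heart of the statement.

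Next I would prove the ``if'' direction. Assuming $\max({\bf s}) + \max({\bf t}) \le 2p-2$, I want $|V_X| + |V_Y| \ge p^2$. Using the formula above, $|V_X| = \binom{p}{2} + (\text{number of positive roots } H_i+H_j \text{ with } X_i+X_j\neq 0) \ge \binom{p}{2} + (\text{number of pairs } i<j \text{ not both in the ``zero-or-negative'' part})$. The cleanest route is: since $u\le 1$, the configuration $[{\bf s};u]$ has at most one zero entry, so the only vanishing $H_i+H_j$ come either from that lone zero paired with nothing (it contributes no vanishing sum-root with another nonzero entry) — so in fact for $u\le 1$ and no $[{\bf s}]^-$ correction, \emph{all} roots $H_i+H_j$ survive and $|V_X| = \binom{p}{2} + \binom{p}{2}$ minus the within-block count for the difference roots only, i.e. $|V_X| = p(p-1) - \sum_k \binom{s_k}{2}$ (adjusting by a small constant in the $[{\bf s}]^-$ case, where one extra sum-root dies). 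Then $|V_X|+|V_Y| \ge 2p(p-1) - \sum_k\binom{s_k}{2} - \sum_\ell\binom{t_\ell}{2} + (\text{corrections})$, and since $\sum_k\binom{s_k}{2} \le \binom{\max{\bf s}}{2} + \binom{p-\max{\bf s}}{2}$ is controlled by $\max{\bf s}$ alone, the constraint $\max{\bf s}+\max{\bf t}\le 2p-2$ forces the right-hand side to be $\ge p^2$ after a short case check on the corrections. The ``only if'' direction is exactly the contrapositive of Proposition \ref{7cases}: if $\max({\bf s})+\max({\bf t}) > 2p-2$ then $|V_X|+|V_Y| < p^2$.

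The main obstacle I anticipate is bookkeeping rather than anything deep: correctly handling the $[{\bf s}]^-$ configurations (where one entry is the negative $-x_r$ of another, killing exactly one extra sum-root $H_i+H_j$), the lone zero entry permitted when $u=1$, and the small-$p$ boundary cases $p=2,3$ where the inequality $\max{\bf s}+\max{\bf t}\le 2p-2$ is very restrictive. I would organize this by invoking Lemma \ref{relatives}(3),(5) and Corollary \ref{relativesReduction} to pass to a relative pair in $\overline{\a^+}$ with no negative entries whenever $u>0$ or $v>0$, thereby eliminating the $[{\bf s}]^-$ subtlety in those cases, and then treat the remaining purely-positive and single-$[{\bf s}]^-$ configurations by the direct count above; Proposition \ref{7cases} already disposes of the extremal pairs, so only the sub-extremal inequality needs the arithmetic.
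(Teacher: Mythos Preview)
Your proposal is correct and follows the same two-step strategy as the paper: invoke Proposition \ref{7cases} for the ``only if'' direction, and bound $|V_X|$ from below via the configuration for the ``if'' direction. The paper's argument is considerably more compressed than yours, however. Rather than deriving the full formula $|V_X| = p(p-1) - \sum_k \binom{s_k}{2}$ and then bounding $\sum_k \binom{s_k}{2}$ via the convexity estimate $\sum_k\binom{s_k}{2} \le \binom{\max{\bf s}}{2} + \binom{p-\max{\bf s}}{2}$, the paper simply splits into the two cases $\max{\bf s}=p$ and $\max{\bf s}\le p-1$ (assuming WLOG $\max{\bf s}\ge\max{\bf t}$) and writes down the minimal value of $|V_X|$ and $|V_Y|$ in each, checking that the sums meet $p^2$. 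Your convexity route reaches the same bounds but with more machinery.

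One point where you are working harder than necessary: the hypothesis of the corollary is that $X$ and $Y$ have configurations of type $[{\bf s};u]$, so the $[{\bf s}]^-$ case does not arise and your discussion of ``one extra sum-root dying'' and the associated corrections can be dropped entirely. With $u\le 1$ all diagonal entries of $\mathcal{D}_X$ are nonnegative and no root $H_i+H_j$ vanishes, so $|V_X|=p(p-1)-\sum_k\binom{s_k}{2}$ holds without caveats. You correctly flag the small-$p$ boundary cases; indeed your convexity bound (and the paper's stated bound $|V_Y|\ge p(p-1)/2+2(p-2)$ when $\max{\bf t}\le p-2$) gives $|V_X|+|V_Y|\ge p^2+p-4$, which only reaches $p^2$ for $p\ge 4$, so the case $p=3$, $\max{\bf s}=3$, $\max{\bf t}\le 1$ needs the direct observation that then $Y$ is regular and $|V_Y|=6$.
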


\begin{proof}
By Proposition \ref{NSSE} only the sufficiency of condition (\ref{2p-2}) needs to be proven.
Suppose $\max({\bf s}) + \max({\bf t}) \leq 2\,p-2$ and that $\max{\bf s}\geq \max{\bf t}$.  If $\max {\bf s}=p$ then $|V_X|=p\,(p-1)/2$ and 
$\max{\bf t}\leq p-2$ implies $|V_Y|\geq p\,(p-1)/2+2(p-2)$.  If both $\max{\bf s}\leq p-1$ and $\max{\bf t}\leq p-1$ then 
$|V_X|\geq p\,(p-1)/2+p-1$ and $|V_Y|\geq p\,(p-1)/2+p-1$.  In both cases, the result follows.
\end{proof}

\begin{definition}
We will call {\bf exceptional} the set of configurations listed in (\ref{BAD})
and denote it by ${\mathcal E }$.
\end{definition}

\begin{prop}\label{espess}
Let $X$, $Y\in\a$ be such that $\mathcal{D}_X=\diag[b,b,c,c]$ with $b>c>0$ and $\mathcal{D}_Y=\diag[a,a,a,a]$ with $a>0$.  Then 
$\delta_{e^X}^\natural \star \delta_{e^Y}^\natural$ has no density.
\end{prop}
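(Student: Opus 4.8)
The plan is to invoke Corollary \ref{VXVY}: since $m_{X,Y}=\delta_{e^X}^\natural\star\delta_{e^Y}^\natural$, it suffices to show that $V_X+\Ad(k)\,V_Y\neq\p$ for \emph{every} $k\in K$. I would work in the model $\p\cong M_4(\R)$, $Z\mapsto Z^s$, in which $K\simeq\SO(4)\times\SO(4)$ acts by $(A_1,A_2)\cdot B=A_1BA_2^{-1}$ and $M_4(\R)=\mathfrak{so}(4)\oplus\mathcal S_4$ (skew $\oplus$ symmetric matrices) is orthogonal for the trace form. Reading Table \ref{X}: $\mathcal{D}_Y=\diag[a,a,a,a]$ annihilates every root $H_i-H_j$ and no root $H_i+H_j$, so $V_Y=\mathfrak{so}(4)$; and $\mathcal{D}_X=\diag[b,b,c,c]$ annihilates exactly $H_1-H_2$ and $H_3-H_4$, so $V_X=\mathfrak{so}(4)\oplus W$, where $W$ is the $4$-dimensional space of symmetric matrices supported on the two off-diagonal $2\times2$ blocks (relative to the splitting $\{1,2\}\sqcup\{3,4\}$). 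In particular $\dim V_X+\dim V_Y=10+6=16=\dim\p=p^2$, so the necessary criterion of Corollary \ref{p2} gives nothing and the real question is whether this sum can ever be \emph{direct}.

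I would then reduce to a question about $\SO(4)$ alone. Since $\mathfrak{so}(4)\subseteq V_X$, project $\p$ onto $\p/\mathfrak{so}(4)\cong\mathcal S_4$ and then onto $\mathcal S_4/W\cong\mathcal S_2\oplus\mathcal S_2$ (the two diagonal $2\times2$ blocks). Using $S^{T}=-S$ and $A_i^{-1}=A_i^{T}$, a short computation (a conjugation by $A_1$ removes one factor) shows that $V_X+\Ad(k)\,V_Y=\p$ if and only if the linear map
\begin{align*}
\Phi_B\colon\mathfrak{so}(4)\to\mathcal S_2\oplus\mathcal S_2,\qquad S\longmapsto\bigl(\text{the two diagonal }2\times2\text{ blocks of }SB^{-1}-BS\bigr),
\end{align*}
is onto, where $B=A_2A_1^{-1}$ runs over all of $\SO(4)$ (note that $SB^{-1}-BS$ is automatically symmetric). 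Pairing against the trace form, $\Phi_B$ fails to be onto precisely when there is a nonzero \emph{block-diagonal} symmetric matrix $\widehat M$ with $B\widehat M B=\widehat M$. So the whole statement reduces to: \emph{for every $B\in\SO(4)$ there is a nonzero block-diagonal symmetric $\widehat M$ with $B\widehat M B=\widehat M$.}

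To prove this I would diagonalize $B$: write $B=OB_0O^{-1}$ with $O\in\SO(4)$ and $B_0=R_\theta\oplus R_\phi$ a block of two planar rotations, and set $\mathcal M_{B_0}=\{M\in\mathcal S_4:B_0MB_0=M\}$. One needs the $6$-dimensional space $\mathcal D$ of block-diagonal symmetric matrices to meet $O\,\mathcal M_{B_0}\,O^{-1}$; equivalently, conjugating back, there must be a nonzero $\widehat N\in\mathcal M_{B_0}$ with $O\widehat NO^{-1}$ block-diagonal. If $\theta$ or $\phi$ lies in $\{0,\pi\}$ or $\theta\equiv\pm\phi$, then inspecting the three $2\times2$ blocks of $M$ gives $\dim\mathcal M_{B_0}\geq5$, so $\dim\bigl(O\,\mathcal M_{B_0}\,O^{-1}\cap\mathcal D\bigr)\geq5+6-10=1$ and we are done. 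Otherwise $\mathcal M_{B_0}$ is exactly the $4$-dimensional space of block-diagonal symmetric matrices with traceless blocks, the count only gives $4+6-10=0$, and the intersection must be exhibited directly. Writing $O=(O_{ij})$ in $2\times2$ blocks and $\widehat N=\diag(M_1,M_2)$ with $M_1,M_2$ traceless symmetric, ``$O\widehat NO^{-1}$ block-diagonal'' means $O_{11}M_1O_{21}^{T}+O_{12}M_2O_{22}^{T}=0$. If $M_1\mapsto O_{11}M_1O_{21}^{T}$ is not injective on traceless symmetric matrices, its kernel gives a solution with $M_2=0$; if it is injective, I would invoke the CS decomposition $O=\diag(U_1,U_2)\left(\begin{smallmatrix}C&-S\\ S&C\end{smallmatrix}\right)\diag(V_1,V_2)$ with $C=\diag(\cos\phi_1,\cos\phi_2)$, $S=\diag(\sin\phi_1,\sin\phi_2)$, $U_i,V_i\in O(2)$: a direct computation identifies $\{O_{11}M_1O_{21}^{T}\}$ with $U_1(C\,\mathcal S_2^0\,S)U_2^{T}$ and $\{O_{12}M_2O_{22}^{T}\}$ with $U_1(S\,\mathcal S_2^0\,C)U_2^{T}$ (here $\mathcal S_2^0$ is the traceless symmetric $2\times2$ matrices), and \emph{both} of these planes contain $U_1\diag(\cos\phi_1\sin\phi_1,-\cos\phi_2\sin\phi_2)U_2^{T}$, which is nonzero because injectivity forces $\diag(\cos\phi_1\sin\phi_1,-\cos\phi_2\sin\phi_2)\neq0$. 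This common vector produces the required $\widehat N$.

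The step I expect to be the main obstacle is exactly this last, generic case. Since $\dim V_X+\dim V_Y$ there equals $\dim\p$ on the nose, one cannot conclude by a dimension count (in contrast with the generic situations of \cite{PGPS_2013}); the content is the ``accidental'' common vector coming from the CS decomposition of the diagonalizing matrix, which is precisely what prevents $V_X+\Ad(k)\,V_Y$ from exhausting $\p$. The degenerate configurations of $O$ itself get absorbed into the non-injectivity alternative, so no extra case analysis is needed for them.
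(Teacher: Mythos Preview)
Your argument is correct. The reduction to the condition ``for every $B\in\SO(4)$ there exists a nonzero block-diagonal symmetric $\widehat M$ with $B\widehat M B=\widehat M$'' is clean and accurate (the key identity being that $B^{-1}\widehat M-\widehat M B$ is automatically skew, so orthogonality to all skew $S$ forces it to vanish). In the generic case, the CS decomposition indeed produces the common element $U_1\diag(\cos\phi_1\sin\phi_1,-\cos\phi_2\sin\phi_2)U_2^{T}$ of the two planes, and injectivity of $M_1\mapsto O_{11}M_1O_{21}^{T}$ does force this diagonal to be nonzero, since non-injectivity in the $x$--direction of $N\mapsto CNS$ is exactly $c_1s_1=c_2s_2=0$.

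The paper takes a different route. It exploits that $e^Y$ commutes with every $\left[\begin{smallmatrix}C&0\\0&C\end{smallmatrix}\right]$, $C\in\SO(4)$, together with the decomposition of Corollary \ref{decompo} and a further Cartan decomposition of the remaining $\SO(4)$ factor adapted to the block structure of $\mathcal D_X$, to reduce the relevant $k$'s to a four-parameter family $k_0=k^{r_1}_{Y^+_{1,3}}k^{r_2}_{Y^+_{2,4}}k^{t_1}_{Z^+_{1,2}}k^{t_2}_{Z^+_{3,4}}$. A direct root-vector computation (Lemma \ref{calculs}) then shows that the $\a$-component of $\Ad(k_0)V_Y$ always lies in the hyperplane $H_1+H_3=H_2+H_4$, so $\a\not\subset V_X+\Ad(k_0)V_Y$. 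What the paper's approach buys is a \emph{uniform} obstruction: one fixed hyperplane in $\a$ avoided for every $k$, and this explicit constraint is reused later to show that $(\delta_{e^X}^\natural)^3$ is singular when $p=4$. Your approach is more self-contained linear algebra and avoids both Corollary \ref{decompo} and the root-vector calculus, at the cost of producing an obstruction $\widehat M$ that depends on $k$; for the present proposition that is perfectly adequate.
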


\begin{proof}
According to Corollary \ref{decompo}, we can write
\begin{align*}
K\,e^X\,K\,e^Y\,K=&K\,e^X\,
\left[\begin{array}{cc}A&0\\0&A\end{array}\right]
k^{t_1}_{Z^+_{1,2}}\,k^{t_2}_{Z^+_{3,4}}
\,\left[\begin{array}{cc}C&0\\0&C\end{array}\right]
\,e^Y\,K\\
=&K\,e^X\,
\,\exp(\left[\begin{array}{cccc}0&R&0&0\\-R&0&0&0\\0&0&0&R\\0&0&-R&0\end{array}\right])
k^{t_1}_{Z^+_{1,2}}\,k^{t_2}_{Z^+_{3,4}}\,
\,e^Y\,K
=K\,e^X
  k^{r_1}_{Y^+_{1,3}}\,k^{r_2}_{Y^+_{2,4}}
\,k^{t_1}_{Z^+_{1,2}}\,k^{t_2}_{Z^+_{3,4}}
\,e^Y\,K
\end{align*}
(here $Z_{1,2}^+$, $Z_{3,4}^+$, $Y_{1,3}^+$, $Y_{2,4}^+$ are exactly as in Table \ref{X}).
We used the fact that $e^Y$ and $\left[\begin{array}{cc}C&0\\0&C\end{array}\right]$ commute, the Cartan decomposition 
$
A=\left[\begin{array}{cc}A_1&0\\0&A_2\end{array}\right]
\,\exp(\left[\begin{array}{cc}0&R\\-R&0\end{array}\right])\,\left[\begin{array}{cc}C_1&0\\0&C_2\end{array}\right]
$
($R=\diag[r_1,r_2]$, $A_i$, $C_i\in\SO(2)$), and the facts that
$e^X$, $\left[\begin{array}{cccc}A_1&0&0&0\\0&A_2&0&0\\0&0&A_1&0\\0&0&0&A_2\end{array}\right]$  commute  
 and $\left[\begin{array}{cccc}C_1&0&0&0\\0&C_2&0&0\\0&0&C_1&0\\0&0&0&C_2\end{array}\right]$ commutes with 
$k^{t_1}_{Z^+_{1,2}}\,k^{t_2}_{Z^+_{3,4}}$ and with $e^Y$.

Now it is easy  to see by considering the proof of Proposition \ref{KKK}(ii) that for these particular $X$ and $Y$, the condition $V_X+\Ad(k)\,V_Y=\p$ must be satisfied by $k$ of the form 
$k_0= k^{r_1}_{Y^+_{1,3}}\,k^{r_2}_{Y^+_{2,4}}
\,k^{t_1}_{Z^+_{1,2}}\,k^{t_2}_{Z^+_{3,4}}$. 
On the other hand, $V_Y=\langle Z_{i,j},\ i<j\rangle$ and $\Ad(k_0)(Z_{i,j})$, $i<j$, can only produce diagonal elements which satisfy
$H_1+H_3=H_2+H_4$ as can be checked  by Lemma \ref{calculs} and using the fact that $\Ad(k_0)=\Ad(k^{r_1}_{Y^+_{1,3}})\,\Ad(k^{r_2}_{Y^+_{2,4}})
\,\Ad(k^{t_1}_{Z^+_{1,2}})\,\Ad(k^{t_2}_{Z^+_{3,4}}$).
Consequently, $\a\not\subset V_X+\Ad(k_0)\,V_Y$ and the density cannot exist.
\end{proof}

\begin{prop}\label{ness}
If $X$ and $Y$ are not eligible then the density does not exists.
\end{prop}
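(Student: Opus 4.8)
\textbf{Proof plan for Proposition \ref{ness}.}

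The plan is to prove the contrapositive by reducing to a manageable list of ``minimal'' non-eligible pairs and then killing each one with the necessary criterion of Corollary \ref{p2} together with the fine obstructions of Propositions \ref{7cases} and \ref{espess}. First I would split according to the two clauses of Definition \ref{defEligible}. In the case $u\le 1$, $v\le 1$, non-eligibility means $\max({\bf s})+\max({\bf t})>2p-2$, and Proposition \ref{7cases} already gives $|V_X|+|V_Y|<p^2$; by Corollary \ref{p2} the density cannot exist. This disposes of clause \eqref{cas1} immediately. In the case $u\ge 2$ or $v\ge 2$, non-eligibility means $\max({\bf s},2u)+\max({\bf t},2v)>2p$. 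Here I would use Lemma \ref{repeat} (with $N=2p$, embedding $\SO_0(p,p)$ into $\SL(2p,\R)$ via the singular-value description of Remark \ref{S}): if, say, $\mathcal{D}_X$ has $u$ zeros and $\mathcal{D}_Y$ has a block of size $\ge 2p-2u+1$ among its $2p$ signed singular values, then every element of $\tilde a(e^{2X}\SO(2p)e^{2Y})$ has a repeated singular value, forcing the Cartan projection $a(e^X K e^Y)$ into a proper subvariety of $\overline{\a^+}$, hence the support has empty interior. The bookkeeping is to check that $\max({\bf s},2u)+\max({\bf t},2v)>2p$ always produces such a ``big block versus many zeros'' configuration in the $2p$ signed singular values — using that $u$ zeros on $\mathcal{D}_X$ give $2u$ zeros (a single block of size $2u$) among the $2p$ values $\{\pm a_i\}$, while a block $s_i$ of equal nonzero entries $x$ gives a block of size $s_i$ of value $e^x$ (and another of size $s_i$ of value $e^{-x}$). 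One applies Lemma \ref{repeat} in the direction that makes $r-s\ge 1$.

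Next I would handle the residual cases where the above two mechanisms do not directly apply, namely pairs that are eligible in the crude counting sense but excluded by the anomalous clause \eqref{except4} with $p=4$. By Lemma \ref{relatives}(3) and Corollary \ref{relativesReduction} it suffices to treat one representative of each relative class, and by the remark following Definition \ref{defEligible} these representatives are $\{X[4],Y[(2,2)]\}$ (equivalently the pairs listed there). This is exactly the content of Proposition \ref{espess}: with $\mathcal{D}_X=\diag[b,b,c,c]$ and $\mathcal{D}_Y=\diag[a,a,a,a]$, the condition $V_X+\Ad(k)V_Y=\p$ can only be tested on $k$ of the special form $k_0$ coming from Corollary \ref{decompo}, and then Lemma \ref{calculs} shows $\Ad(k_0)V_Y$ contributes only diagonal directions satisfying $H_1+H_3=H_2+H_4$, so $\a\not\subset V_X+\Ad(k_0)V_Y$. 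Combining: an arbitrary non-eligible pair falls under clause \eqref{cas1}, or clause \eqref{cas2}, or the $p=4$ exception, and in each case the density fails.

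\textbf{Main obstacle.} I expect the crux to be the second case, $u\ge 2$ or $v\ge 2$ with $\max({\bf s},2u)+\max({\bf t},2v)>2p$. The counting lemma Corollary \ref{p2} is \emph{not} strong enough there — one can have $|V_X|+|V_Y|\ge p^2$ yet still no density — so one genuinely needs the Lie-theoretic obstruction via Lemma \ref{repeat}, and the delicate point is translating ``$u$ zeros in $\mathcal{D}_X$'' into the correct repeated-block structure among the $2p$ signed singular values so that the hypotheses $s+1\le r<N$, $s\ge 1$ of Lemma \ref{repeat} are met with $r-s\ge 1$. A secondary subtlety is making sure the low-dimensional pair $X[5],Y[3;2]$ mentioned in the remarks (which is eligible, not a counterexample here, but sits near the boundary) does not get swept up by mistake; one checks directly that for it the inequality $\max({\bf s},2u)+\max({\bf t},2v)=5+4=9> 2p=10$ fails, so it is correctly excluded from this proposition. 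The remaining verifications — relative-pair reductions and the explicit root-vector computations — are routine given Lemmas \ref{relatives}, \ref{calculs} and Corollary \ref{decompo}.
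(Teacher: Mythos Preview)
Your plan is correct and is exactly the paper's own argument: the case $u,v\le 1$ via Proposition \ref{7cases} and Corollary \ref{p2}; the case $u\ge 2$ or $v\ge 2$ with $\max({\bf s},2u)+\max({\bf t},2v)>2p$ via Lemma \ref{repeat} applied to the $2p$ signed singular values (after conjugating by $S$ and using Lemma \ref{relatives}(5) to assume non-negative entries, just as the paper does); and the $p=4$ anomaly via Proposition \ref{espess} together with Corollary \ref{relativesReduction}. One small loose end worth tightening: the exceptional pairs $\{[4],[2;2]\}$ and $\{[4]^-,[2;2]\}$ with $v=2$ are \emph{not} relative pairs of $\{[4],[(2,2)]\}$ (since $X[4]$ has no zero entry and hence is not a relative of itself by Lemma \ref{relatives}(1)), so your reduction to a single representative does not quite cover them --- but the proof of Proposition \ref{espess} goes through verbatim when $c=0$, so this is a bookkeeping point rather than a real gap, and the paper's own proof glosses over the same detail.
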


\begin{proof}
Let the configuration of $X$ be $[{\bf s}; u]$ or $[{\bf s}]^-$
and the configuration of $Y$ be $[{\bf t}; v]$ or $[{\bf t}]^-$.  Proposition \ref{7cases}, Proposition \ref{espess} and  Corollary 
\ref{relativesReduction} imply the non-existence of density when $X$ and $Y$ are not eligible and  $u\leq 1$ and  $v\leq 1$.   

Suppose then   that $u\geq 2$ or $v\geq 2$ and 
$\max({\bf s},2\,u)+\max({\bf t},2\,v)> 2\,p$ and consider the matrices $a(e^X\,k\,e^Y)$, $k\in \SO(p)\times \SO(p)$. Using (5) of Lemma \ref{relatives} and Lemma \ref{cousin}, we may assume that the diagonal entries of $\mathcal{D}_X$ and $\mathcal{D}_Y$ are non-negative.
Applying Remark \ref{S}, we have
\begin{align*}
\tilde a(e^X\,k\,e^Y)=\tilde a(
\overbrace{(S^T\,e^{X}\,S)}^{e^{S^T\,X\,S}}
\,\overbrace{(S^T\,k\,S)}^{\in\SO(p+q)}\,
\overbrace{(S^T\,e^{Y}\,S)}^{e^{S^T\,Y\,S}})
\end{align*}
where $\tilde a(g)$ is the diagonal matrix with the singular values of $g$ on  the diagonal, ordered decreasingly (see the explanation before Lemma 
\ref{repeat}).                                                                                                                                                                                                                                                                                   

If $u+v>p$ then there are 
$r-s=r+(N-s)-N=2\,u+2\,v-2\,p=2\,(u+v-p)$
repetitions of $0+0=0$ in coefficients of $ \tilde a(e^X k e^Y)$. Therefore 0 occurs at least $u+v-p>0$ times as a diagonal entry of 
$\mathcal{D}_H$ for every $H\in a(e^X\,K\,e^Y)$ which implies that $a(e^X\,K\,e^Y)$ has empty
interior.
If $2\,u+\max({\bf t})>2\,p$ denote $t=\max({\bf t})$. Let $Y_i\not=0$ be
repeated $t$ times in $\mathcal{D}_Y$ (or, if $t=t_r$ and $Y=Y[t]^-$, we have $t-1$ times $Y_r$ and once $-Y_r$ in $\mathcal{D}_Y$). Then there are 
$r-s=r+(N-s)-N=2\,u+t-2\,p$ repetitions of $Y_i+0$ in coefficients of $ \tilde a(e^X k e^Y)$. Therefore $Y_i$ occurs at 
least $2\,u+t-2p>0$ times as a diagonal entry of $\mathcal{D}_H$ for every $H\in a(e^X\,K\,e^Y)$ 
which implies that $a(e^X\,K\,e^Y)$ has empty interior. 
\end{proof}

\section{Sufficiency of the eligibility condition}\label{Suff}

\subsection{Case $u\leq1$ and $v\leq1$}

\begin{remark}
In our proof, the case $u\leq1$ and $v\leq1$ is equivalent to the case $u=v=0$.  Indeed, for $H\in\overline{\a^+}$, if the sole diagonal entry 0 in $\mathcal{D}_H$ is replaced by a positive entry different from the existing diagonal entries of $\mathcal{D}_H$, then $V_H$ is unchanged. We will therefore assume in this section that $u=0$ and $v=0$.
\end{remark}

\begin{definition}
Let ${\bf s}=(s_1, s_2\dots, s_m)$ and ${\bf t}=(t_1, t_2, \dots, t_n)$ be two partitions of $p$ ($\sum_i\,s_i=p=\sum_i\,t_j$).  We will say that ${\bf s}$ is finer than ${\bf t}$
if the $t_i$'s are sums of disjoint subsets of the $s_j$'s (for example, ${\bf s}=[3,2,2,1,1,1]$ is finer than ${\bf t}=[5,3,2]=[3+2,2+1,1+1]$).  
\end{definition}

\begin{remark}
If $X=X[{\bf s}]$ and $Y=Y[{\bf t}]$ and ${\bf s}$ is finer than ${\bf t}$ then $V_Y\subset V_X$.
\end{remark}

In the following lemma, we reduce in a significant way the number of elements for which we must prove the existence of the density.
\begin{lemma}\label{FOND}
For $p\geq 5$, it is sufficient to prove the existence of the density in the following cases:
\begin{itemize}
\item[$S_1$:] $X[p], Y[p-k,k]$ for $p-k\ge k\ge 2$
 
\item[$S_2$:]  $X[p]^-, Y[p-k,k]$ for $p-k\ge k\ge 2$
 
\item[$S_3$:]  $X[1,p-1]^-, Y[p-1,1]$  
\item[$S_4$:]  $X[p-1,1], Y[p-1,1]$.
\end{itemize} 

For $p=4$ the same is true provided the case $S_1$ is replaced by the cases $X[4], Y[2,1,1]$ and $X[3,1], Y[2,2]$.
\end{lemma}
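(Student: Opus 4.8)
The plan is to reduce the infinitely many eligible pairs $(X,Y)$ with $u=v=0$ to the four families $S_1$--$S_4$ by exploiting two monotonicity facts: first, Corollary~\ref{VXVY}, which says absolute continuity of $m_{X,Y}$ is equivalent to $V_X+\Ad(k)V_Y=\p$ for some $k$; and second, the observation that if ${\bf s}$ is finer than ${\bf s}'$ (at the level of the underlying partitions, matching up equal blocks) then $V_{X[{\bf s}']}\subset V_{X[{\bf s}]}$, so that replacing a configuration by a \emph{coarser} one can only make the condition harder. Concretely, I would argue that every eligible pair is ``dominated'' in this sense by one of the four listed pairs, in the precise logical direction that establishing the density for $S_1$--$S_4$ forces it for all eligible pairs; the non-eligible $p=4$ exception is handled separately by Proposition~\ref{espess}, so it does not need to be produced here.

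First I would fix an eligible pair $X=X[{\bf s}]$, $Y=Y[{\bf t}]$ with $u=v=0$, satisfying $\max{\bf s}+\max{\bf t}\le 2p-2$, and by Remark~\ref{order} arrange $s_1\ge s_2\ge\cdots$ and $t_1\ge t_2\ge\cdots$, with possibly a trailing negative entry recorded by the $-$ superscript. Write $s=\max{\bf s}=s_1$ and $t=\max{\bf t}=t_1$, so $s+t\le 2p-2$. The first reduction is to coarsen $Y$: since $t_1\le p-s\le p-t_1$, the partition ${\bf t}$ is finer than $(p-t_1,t_1)$ (group all blocks other than the first into a block of size $p-t_1\ge t_1$), hence $V_{Y[p-t_1,t_1]}\subset V_Y$, and it suffices to treat $Y=Y[p-k,k]$ with $k=t_1$, $p-k\ge k$. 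If $k=1$ this is the regular-ish configuration $[p-1,1]$; if $k\ge 2$ we are in the $S_1$/$S_2$ families once we also coarsen $X$. For $X$: if $s_1\le p-1$ then ${\bf s}$ is finer than $(p-1,1)$, so $V_{X[p-1,1]}\subset V_X$ and we may take $X=X[p-1,1]$ (or its $-$ relative); combined with the two possibilities for $Y$ this lands us in $S_3$ or $S_4$, after using Lemma~\ref{relatives}(3) and Corollary~\ref{relativesReduction} to move a negative entry from whichever factor to wherever the list wants it. If $s_1=p$ then $X=X[p]$ or $X[p]^-$, and eligibility gives $t_1\le p-2$, i.e. $k\ge 2$ is automatic, putting us in $S_1$ or $S_2$. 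I would spell out that the $-$ superscripts can always be normalized to the positions shown in $S_1$--$S_4$ by Corollary~\ref{relativesReduction} applied to relative pairs.

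The $p=4$ clause needs the list $S_1$ widened because the coarsening above can fail to stay inside $S_1$: the pairs $X[4],Y[2,1,1]$ and $X[3,1],Y[2,2]$ are the eligible pairs with $p=4$ whose $Y$-partition is not already of the form $[p-k,k]$ in a way covered by $S_1$ (note $Y[2,2]$ is $[p-2,2]$ but $X[4],Y[2,2]$ is precisely the \emph{non}-eligible exception, so the eligible neighbour one must treat is $X[3,1],Y[2,2]$), and $Y[2,1,1]$ is not of the form $[p-k,k]$ at all since its coarsening $[3,1]$ has $\max=3=p-1$ but $X[4],Y[3,1]$ has $\max{\bf s}+\max{\bf t}=4+3>2p-2=6$, so it is non-eligible and cannot be used as the dominating case — hence $Y[2,1,1]$ itself must be on the list. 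I would verify by inspection (finitely many partitions of $4$) that every eligible pair with $p=4$, $u=v=0$ is dominated by $S_2$, $S_3$, $S_4$, $X[4],Y[2,1,1]$, or $X[3,1],Y[2,2]$.

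The main obstacle is bookkeeping rather than conceptual: one must check that the ``finer than'' relation and the manipulation of the $-$ superscript genuinely preserve eligibility in the right direction and that no eligible pair slips through — in particular the $p=4$ case where the naive coarsening of $Y$ to $[p-k,k]$ produces a \emph{non}-eligible pair (because coarsening $X$ or $Y$ increases $\max$ and can violate $\max{\bf s}+\max{\bf t}\le 2p-2$), which is exactly why the low-dimensional exceptions appear. I would therefore organize the proof as: (1) coarsen $Y$ to two blocks when this keeps eligibility, landing in $S_1$/$S_2$/$S_3$/$S_4$; (2) when $s_1<p$, coarsen $X$ to $[p-1,1]$, landing in $S_3$/$S_4$; (3) normalize signs via relatives; (4) enumerate the residual $p=4$ pairs by hand and read off the two extra cases. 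No new machinery is needed beyond Remark~\ref{order}, the inclusion $V_{Y[{\bf t}']}\subset V_{Y[{\bf t}]}$ for ${\bf t}$ finer than ${\bf t}'$, and Corollary~\ref{relativesReduction}.
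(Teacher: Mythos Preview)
Your strategy---coarsening via the ``finer than'' relation and then normalizing signs via relative pairs---is exactly the paper's approach, and the $p=4$ discussion is sound. However, the execution contains a genuine gap.

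The claim ``if $s_1\le p-1$ then ${\bf s}$ is finer than $(p-1,1)$'' is false: for ${\bf s}$ to be finer than $(p-1,1)$ you need a subset of the parts summing to $1$, i.e.\ some $s_i=1$. The partition ${\bf s}=(3,2)$ with $p=5$ has $s_1=3\le 4$ but is not finer than $(4,1)$. Consequently your reduction of the case $s_1\le p-1$ to $S_3/S_4$ breaks down. (The auxiliary inequality ``$t_1\le p-s$'' is also not a consequence of eligibility, and the implication ``$s_1=p\Rightarrow k\ge 2$'' fails when $t_1=1$.)

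The repair is to reorganize the case split. If both $\max{\bf s}\le p-2$ and $\max{\bf t}\le p-2$, coarsen one factor all the way to $[p]$ and the other to a two-block partition $[p-k,k]$ with $k\ge 2$ (always possible: take $k=t_1$ if $2\le t_1\le p-2$, and $k=2$ if $t_1=1$); this lands in $S_1$. If $\max{\bf s}=p-1$ then $X=X[p-1,1]$ already; if also $\max{\bf t}\le p-2$, note that $V_{X[p]}\subset V_{X[p-1,1]}$, so density for $(X[p],Y[p-k,k])$ from $S_1$ still forces density for $(X[p-1,1],Y)$. Only when both factors are exactly $[p-1,1]$ do you genuinely need $S_4$. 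The paper follows precisely this logic, first disposing of the $A_0\times A_0$ case via $S_1$ and $S_4$, then using relatives for $A_1\times A_1$, and treating the mixed $A_0\times A_1$ case separately with $S_2$ and $S_3$; your one-pass treatment of the minus signs would need to be expanded along these lines.
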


\begin{proof}
Suppose $p\geq 5$.
Let us call $A_0$ the configurations of the form $ [{\bf s}]$ and $A_1$ all the others, i.e. the configurations of the form $[{\bf s}]^-$.
 
\begin{itemize}

\item[(a)] We first observe that if the density exists for $S_1$ then it follows that it exists for all pairs $\{X,Y\}$ such that $X$, $Y\in A_0$,
except when $X$ or $Y$ have configurations $[p]$ or $[p-1,1]$. This comes from the fact that all these $X$, $Y$ have structures that are finer and, consequently, the corresponding $V_X$ and $V_Y$ are larger.  
Thus, existence of the density in the cases $S_1$ together with $S_4$ will imply the existence of the density in all the cases when $X$, $Y\in A_0$,
except when $\{X,Y\}\in {\mathcal E }$.
 
\item[(b)]  By switching to relatives and changing the order of $X$ and $Y$, we see that it implies the existence of the density in all the cases when $X$, $Y\in A_1$,
except when $\{X,Y\}\in {\mathcal E }$.
 
\item[(c)]  It remains to show that the cases $S_2$ and $S_3$ imply the existence of the density in all the cases when $X\in A_1$ and $Y\in A_0$,
except when $\{X,Y\}\in {\mathcal E }$. Note first that if $X=[{\bf s}]^-$ then  either $[{\bf s}]^-=[p]^-$ or $V_{X'}\subset V_X$ 
with $X'=X'[1,p-1]^-$.  In the first case, we observe that the case $S_2$ implies the cases $X[p]^-$ and
$Y\in A_0\setminus\{[p],[p-1,1]\}$ for the same reason as in (a). The only cases that remain with $Y[p-1,1]$ are covered by $S_3$.
Finally, switching to relatives, we get the pairs $X\in A_1, Y[p]$ which are not in  ${\mathcal E }$.
 
\end{itemize}
We illustrate the proof of the  Lemma in the case $p=5$.

\begin{align*}
\begin{array}{|c|c|c|c|c|c|c|c|c|c|c|c|c|c|c}
\multicolumn{1}{c}{[2,1^3]}&\multicolumn{1}{c}{[2,2,1]}
&\multicolumn{1}{c}{[3,1,1]}
&\multicolumn{1}{c}{[3,2]}&\multicolumn{1}{c}{[4,1]}
&\multicolumn{1}{c}{[5]}&
\multicolumn{1}{c}{[1^3,2]^-}
&\multicolumn{1}{c}{[2,1,2] ^-}
&\multicolumn{1}{c}{[1,1,3] ^-}
&\multicolumn{1}{c}{[3,2] ^-}
&\multicolumn{1}{c}{[2,3] ^-}
&\multicolumn{1}{c}{[1,4] ^-}
&\multicolumn{1}{c}{[5]^-}\\\cline{1-13}
\surd&\surd&\surd&\surd&\surd&\surd&\surd&\surd&\surd&\surd&\surd&\surd&\surd&\multicolumn{1}{c}{[2,1^3]}\\\cline{1-13}
\multicolumn{1}{c|}{}&\surd&\surd&\surd&\surd&\surd&\surd&\surd&\surd&\surd&\surd&\surd&\surd&\multicolumn{1}{c}{[2,2,1]}\\\cline{2-13}
\multicolumn{1}{c}{}&&\surd&\surd&\surd&\surd&\surd&\surd&\surd&\surd&\surd&\surd&\surd&\multicolumn{1}{c}{[3,1,1]}\\\cline{3-13}
\multicolumn{1}{c}{}&\multicolumn{1}{c}{}&&\surd&\surd&S_1&\surd&\surd&\surd&\surd&\surd&\surd&S_2&\multicolumn{1}{c}{[3,2]}\\\cline{4-13}
\multicolumn{1}{c}{}&\multicolumn{1}{c}{}&\multicolumn{1}{c}{}&&S_4&{\rm X}&\surd&\surd&\surd&\surd&\surd&S_3&{\rm X}&\multicolumn{1}{c}{[4,1]}\\\cline{5-13}
\multicolumn{1}{c}{}&\multicolumn{1}{c}{}&\multicolumn{1}{c}{}&\multicolumn{1}{c}{}&&{\rm X}&\surd&\surd&\surd&\surd&\surd&{\rm X}&{\rm X}&\multicolumn{1}{c}{[5]}\\\cline{6-13}
\multicolumn{1}{c}{}&\multicolumn{1}{c}{}&\multicolumn{1}{c}{}&\multicolumn{1}{c}{}&\multicolumn{1}{c}{}
&&\surd&\surd&\surd&\surd&\surd&\surd&\surd&\multicolumn{1}{c}{[1^3,2]^-}\\\cline{7-13}
\multicolumn{1}{c}{}&\multicolumn{1}{c}{}&\multicolumn{1}{c}{}&\multicolumn{1}{c}{}&\multicolumn{1}{c}{}&\multicolumn{1}{c}{}&&\surd&\surd&\surd&\surd&\surd&\surd&\multicolumn{1}{c}{[2,1,2]^-}\\\cline{8-13}
\multicolumn{1}{c}{}&\multicolumn{1}{c}{}&\multicolumn{1}{c}{}&\multicolumn{1}{c}{}&\multicolumn{1}{c}{}&\multicolumn{1}{c}{}&\multicolumn{1}{c}{}&&\surd&\surd&\surd&\surd&\surd&\multicolumn{1}{c}{[1,1,3]^-}\\\cline{9-13}
\multicolumn{1}{c}{}&\multicolumn{1}{c}{}&\multicolumn{1}{c}{}&\multicolumn{1}{c}{}&\multicolumn{1}{c}{}&\multicolumn{1}{c}{}&\multicolumn{1}{c}{}&\multicolumn{1}{c}{}&&\surd&\surd&\surd&\surd&\multicolumn{1}{c}{[3,2 ]^-}\\\cline{10-13}
\multicolumn{1}{c}{}&\multicolumn{1}{c}{}&\multicolumn{1}{c}{}&\multicolumn{1}{c}{}&\multicolumn{1}{c}{}&\multicolumn{1}{c}{}&\multicolumn{1}{c}{}&\multicolumn{1}{c}{}&\multicolumn{1}{c}{}&&\surd&\surd&\surd&\multicolumn{1}{c}{[2,3]^-}\\\cline{11-13}
\multicolumn{1}{c}{}&\multicolumn{1}{c}{}&\multicolumn{1}{c}{}&\multicolumn{1}{c}{}&\multicolumn{1}{c}{}&\multicolumn{1}{c}{}&\multicolumn{1}{c}{}&\multicolumn{1}{c}{}&\multicolumn{1}{c}{}&\multicolumn{1}{c}{}&&\surd&{\rm X}&\multicolumn{1}{c}{[1,4]^-}\\\cline{12-13}
\multicolumn{1}{c}{}&\multicolumn{1}{c}{}&\multicolumn{1}{c}{}&\multicolumn{1}{c}{}&\multicolumn{1}{c}{}&\multicolumn{1}{c}{}&\multicolumn{1}{c}{}&\multicolumn{1}{c}{}&\multicolumn{1}{c}{}&\multicolumn{1}{c}{}&\multicolumn{1}{c}{}&\multicolumn{1}{c|}{}&{\rm X}&\multicolumn{1}{c}{[5]^-}\\\cline{13-13}
\end{array}
\end{align*}

In the above table, $\surd$ indicates that the pair is eligible,
X indicates that the pair is not eligible and therefore that the density does not exist (the cases identified in (\ref{BAD})) and the $S_i$'s correspond to the notation above (the pair is eligible where the $S_i$'s appear).
We use the reduction from Remark \ref{order}.
\end{proof}
 
\begin{theorem}\label{existenceNOZEROS}
Let $p\ge 2$ and suppose that $X=X[{\bf s};u]$ or $[{\bf s}]^-$ and $Y=Y[{\bf t};v]$ or  $[{\bf t}]^-$  are such that $u\leq1$ and $v\leq1$. If the pair $\{X,Y\}$ does not belong to the set ${\mathcal E }\cup
\{[4],[2,2]\}\cup\{[4]^-,[2,2]^-\}$ then the density exists.
\end{theorem}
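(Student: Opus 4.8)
By Lemma~\ref{FOND}, for $p\ge 5$ it suffices to establish the existence of the density in the four families $S_1$, $S_2$, $S_3$, $S_4$, and for $p=4$ in the two substitute cases $X[4],Y[2,1,1]$ and $X[3,1],Y[2,2]$ together with $S_2$, $S_3$, $S_4$; small cases $p=2,3$ are to be checked by hand. The strategy throughout is to verify, via Corollary~\ref{VXVY}, that there is a single explicit $k\in K=\SO(p)\times\SO(p)$ with $V_X+\Ad(k)V_Y=\p$; by Corollary~\ref{TOTAL} it is enough to exhibit one such $k$, and by Corollary~\ref{p2} the dimension count $|V_X|+|V_Y|\ge p^2$ (which holds off the exceptional set by Corollary~\ref{uv}) tells us the sum can in principle fill $\p$, so what remains is to kill the overlap $V_X\cap\Ad(k)V_Y$ and, crucially, to produce the $\a$-directions, which $V_X$ and $\Ad(k)V_Y$ do not contain directly. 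The plan is to take $k$ of the form $k=\prod k^{t_i}_{X_{\alpha_i}}$, a product of the one-parameter rotations $k^t_{X_\alpha}=e^{t(X_\alpha+\theta X_\alpha)}$ associated to suitably chosen roots $\alpha$ relating the ``blocks'' of $X$ to those of $Y$, and to compute $\Ad(k)V_Y$ using Lemma~\ref{calculs}: the key point is that the operators $\Ad(k^t_{Y^+_{i,j}})$ and $\Ad(k^t_{Z^+_{i,j}})$ rotate a symmetrized root vector $Y_{i,j}$ or $Z_{i,j}$ into the plane spanned by itself and $A_i\mp A_j\in\a$, so each such factor, applied to a root vector of $V_Y$ that is \emph{not} in $V_X$, feeds one new vector plus one $\a$-direction into the span.

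\textbf{Order of the argument.} First I would fix $X=X[{\bf s}]$ with blocks of sizes $s_1,\dots,s_r$ partitioning the index set $\{1,\dots,p\}$, and recall that $V_X=\mathrm{span}\{Y_{i,j},Z_{i,j}\colon i,j \text{ in different blocks of }X\}$, so the ``missing'' directions of $\p$ relative to $V_X$ are $\a$ together with the $Y_{i,j},Z_{i,j}$ with $i,j$ in the \emph{same} block of $X$. For $S_1$ ($X[p]$, $Y[p-k,k]$), $V_X$ is just all $Y_{i,j},Z_{i,j}$ with $i\ne j$ — i.e.\ $V_X$ is the full off-diagonal part of $\p$ — so I only need $\Ad(k)V_Y$ to supply $\a$, and $\dim\a=p$; since $V_Y$ contains all $Y_{i,j},Z_{i,j}$ with $i$ in the first $Y$-block and $j$ in the second, one can choose $k$ as a product of $p$ well-chosen rotations $k^{t}_{Z^+_{i,j}}$ / $k^{t}_{Y^+_{i,j}}$ (with $i$ in block~1, $j$ in block~2) whose derivatives, by Lemma~\ref{calculs}(1), produce $p$ linearly independent combinations of the $A_m$'s while only reshuffling the already-present off-diagonal vectors; the constraint $p-k\ge k\ge 2$ guarantees enough such index pairs to realize $p$ independent $\a$-vectors. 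For $S_2$ ($X[p]^-$) the same $V_X$ (the off-diagonal part of $\p$) and the same construction work verbatim, since $|V_{X[p]^-}|=|V_{X[p]}|$. For $S_3$ and $S_4$, where $V_X$ is now missing the few same-block off-diagonal vectors (just $Y_{1,?}/Z_{?,?}$ within a size-$2$ block, or a size-$(p-1)$ block), I would pick $k$ to simultaneously rotate vectors of $V_Y$ across that block to recover the missing off-diagonal directions \emph{and} generate the needed $\a$-directions, tracking carefully that the same rotation factor is not asked to do two incompatible jobs — this is where the inequalities in the definition of eligibility (in particular $\max{\bf s}+\max{\bf t}\le 2p-2$) are used to ensure enough degrees of freedom. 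The $p=4$ substitute cases $X[4],Y[2,1,1]$ and $X[3,1],Y[2,2]$ are handled by the same method but must be done explicitly because the general index-counting is tight at $p=4$; note $[4],[2,2]$ itself is \emph{excluded} (it is in the forbidden set by Proposition~\ref{espess}), which is exactly why $S_1$ has to be split at $p=4$.

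\textbf{Main obstacle.} The hard part is producing the full $\a$ inside $V_X+\Ad(k)V_Y$ with a \emph{single} $k$: each rotation factor $k^t_{Z^+_{i,j}}$ contributes, via $\Ad$, only the one-dimensional $\a$-direction $A_i+A_j$ (and $A_i-A_j$ for the $Y$-type), so to span the $p$-dimensional $\a$ I must chain together $p$ such factors and then show the resulting $p$ vectors $A_{i_1}\pm A_{j_1},\dots,A_{i_p}\pm A_{j_p}$ are linearly independent, while \emph{simultaneously} checking that the non-$\a$ part of $\Ad(k)V_Y$ does not collapse back into $V_X$ (so that the off-diagonal part of $\p$ is still covered) and that the cross-terms produced when $\Ad(k)$ acts on a root vector already in $V_X$ (cf.\ Lemma~\ref{calculs}(2), which asserts no $\a$-component is produced in those cases) do not spoil independence. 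I expect to organize this as an explicit rank computation of a $p^2\times p^2$ matrix built blockwise, choosing the $t_i$'s generically (e.g.\ so that all relevant $\sin$, $\cos$ are nonzero) and invoking Corollary~\ref{TOTAL} to pass from one good $k$ to a dense open set; the computations, while routine, must be arranged so that the same proof template covers $S_1$–$S_4$ uniformly, with only the bookkeeping of indices changing.
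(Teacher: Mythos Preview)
Your description of $V_X$ is wrong, and this breaks the plan for $S_1$ and $S_2$. You write $V_X=\mathrm{span}\{Y_{i,j},Z_{i,j}\colon i,j \text{ in different blocks of }X\}$, but the correct statement (assuming $u=0$, all $X_i>0$) is that $Z_{i,j}\in V_X$ for \emph{every} $i<j$ (since $X_i+X_j>0$ always), while $Y_{i,j}\in V_X$ only when $X_i\ne X_j$. Thus for $X[p]$ one has $V_X=\langle Z_{i,j}:i<j\rangle$, of dimension $\binom{p}{2}$ --- this is confirmed in Proposition~\ref{7cases} --- and it is \emph{not} ``the full off-diagonal part of $\p$''. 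The directions missing from $V_X$ are $\a$ together with \emph{all} the $Y_{i,j}$'s, a total of $p+\binom{p}{2}$ dimensions. Since $V_Y$ for $Y[p-k,k]$ only contains those $Y_{i,j}$ with $i\le p-k<j$, the $Y_{i,j}$ with both indices in the same $Y$-block are absent from $V_X+V_Y$ as well; rotating $V_Y$ by a product of $k^t_{Z^+_{i,j}}$'s and $k^t_{Y^+_{i,j}}$'s and using Lemma~\ref{calculs} will feed in some $\a$-directions, but your argument gives no mechanism for producing the many missing $Y_{i,j}$'s, and your dimension count (``only need $\a$'') is off by $\binom{p-k}{2}+\binom{k}{2}$.

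The paper avoids this by \emph{not} working directly: it proceeds by induction on $p$, passing to predecessors $X',Y'\in\so(p-1,p-1)$ obtained by deleting the first diagonal entry, embedding $K'=\SO(p-1)\times\SO(p-1)\hookrightarrow K$, and using the inductive hypothesis $V_{X'}+\Ad(k_0)V_{Y'}=\p'$ for generic $k_0\in K'$. One then fills the first-row/first-column ``margin'' of $\p$ using the new vectors $N_X=V_X\setminus V_{X'}$ and $\Ad(k_0)N_Y$ (this is where genericity of $k_0$, via condition~(2) of Corollary~\ref{TOTAL}, is used to ensure independence), keeping one extra vector in reserve; a final $\Ad(e^{t(Z^+_{1,p}+\theta Z^+_{1,p})})$-twist turns that reserved vector into the last missing $\a$-direction (Step~4). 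The case $X[4],Y[2,2]^-$ has no eligible predecessors and is done by a separate determinant computation. Your direct approach could perhaps be salvaged, but it would require a much more elaborate choice of $k$ than you describe, and the inductive route is what actually carries the weight here.
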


\begin{proof} 
The proof proceeds by induction similarly as in \cite{PGPS_2013}, but with a different ``asymmetric'' technique of
executing Steps 2 and 3. Also, for small values of $p$ the proofs must be led separately, due the lack of available good predecessors.
With some exceptions in the starting phase of the induction, and in the case $S_3$ of the Lemma \ref{FOND}, the elements $X$ and $Y$ will be in $\overline{\a^+}$
and their ``usual'' predecessors will be obtained by skipping the first diagonal terms of $\mathcal{D}_X$ and $\mathcal{D}_Y$.

The only case of existence of the density for $p=2$ is for regular $X[1,1]$ and $Y[1,1]$ (according to (\ref{BAD}), only
the pair $X[1,1],Y[1,1]$ of two regular elements verifies $|V_X|+|V_Y|\geq p^2$).

For $p=3$, we have four possible configurations of nonzero singular elements:
$[2,1]$, $[3]$, $[1,2]^-$, $[3]^-$.

All exceptional cases listed in (\ref{BAD}) appear and only three pairs of singular configurations, namely 
($X[2,1]$, $Y[2,1]$), ($X[2,1]$, $Y[1,2]^-$) and ($X[1,2]^-$, $Y[1,2]^-$),
verify $|V_X|+|V_Y|\ge p^2$.  Given that the pairs 	$(X[2,1], Y[2,1])$ and $(X[1,2]^-, Y[1,2]^-)$ are relatives, we only have to check the cases 
$(X[2,1], Y[2,1])$ and $(X[2,1],Y[1,2]^-)$.

In the case  $X[2,1]$, $Y[2,1]$, we write $\mathcal{D}_X=\diag[a,a,b]$, $\mathcal{D}_Y=\diag[c,c,d]$ and the predecessors
$\mathcal{D}_{X'}=\diag[a,b]$, $\mathcal{D}_{Y'}=\diag[c,d]$, obtained by skipping the first coordinates, are regular.  
In the case $X[2,1]$, $Y[1,2]^-$ we  consider 
$\mathcal{D}_X=\diag[a,a,b]$, $\mathcal{D}{w\cdot y}=\diag[-d,c,d]$ and only now go to regular predecessors $\mathcal{D}_{X'}=\diag[a,b]$, $\mathcal{D}_{(w\cdot Y)'}=\diag[c,d]$. The general proof given below applies in these cases.

When $p=4$, by Lemma \ref{FOND}, we must show the existence of the density for:

\begin{enumerate} 
\item $X[4], Y[2,1,1]$ and $X[2,2], Y[3,1]$,
 
\item $X[4]^-, Y[2,2]$ or equivalently the relative pair $X[4],Y[2,2]^-$, 
 
\item $X[1,3]^-, Y[3,1]$, 
 
\item $X[3,1], Y[3,1]$.
\end{enumerate}

In the cases (1), (3) and (4), the usual predecessors have density when $p=3$. The general proof given below applies in these cases. 
 
For the case (2), observe that when $p=3$, the configuration $X'[3]$ never gives the existence of density, when $Y'$ is singular.
That is why the second case $X[4], Y[2,2]^-$ has no good predecessors and  this case must be proved separately.
We will do it after the general proof.
 
Starting from $p=5$, the {\bf general proof} by induction applies, the exceptions due to small values of $p$ being taken care of. We present this proof now.
 
{\bf Step 1.} Let $Y\in \overline {\a^+}$ be such that $\mathcal{D}_Y=\diag[\overbrace{a,\dots,a}^{p-k},\overbrace{b,\dots,b}^k]$ and let its predecessor $Y'$ be such that $\mathcal{D}_{Y'}=\diag[\overbrace{a,\dots,a}^{p-k-1},\overbrace{b,\dots,b}^k]$.   The space $V_Y$ is generated by completing a basis of $V_{Y'}$ with
\begin{align*}
N_Y=\{ Y_{1,p-k+1},\ldots, Y_{1,p}, Z_{1,2},\ldots, Z_{1,p} \}.
\end{align*} 

We choose the predecessor $X'$ of $X$ in the same manner, except in the case $S_3$, where we first write $\mathcal{D}_X=\diag[b,b,\dots,b,a,-b]$ where $a>b>0$ and skip the first term $b$ in $\mathcal{D}_{X}$.

It is easy to see that the predecessors of $X$ and $Y$ are in the corresponding classes $S_i$ for $p-1$, so with density, except
for $X[5]$,  $Y[3,2]$, due to the non-eligible case $X[4], Y[2,2]$. In this last case we arrange $\mathcal{D}_X=\diag[a,a,a,a,a]$ and $\mathcal{D}_{wY} =\diag[-b,b,b,c,-c]$ and go down to good predecessors $X'[4]$, ($wY)'[2,2]^-$.
The proof described below leads to the existence of the density.
 
By the induction hypothesis and considering Corollary \ref{TOTAL}, there exists an open dense subset $D'$ of $\SO(p-1)\times\SO(p-1)$ such that for all $w'\in W'$ and $k_0\in D'$,
\begin{align}\label{VX'VY'p'}
V_{w'\cdot X'} +\Ad(k_0) V_{Y'}=\p'
\end{align}
and $k_0$ verifies  condition (2) of Corollary \ref{TOTAL}.

We embed $K'= \SO(p-1)\times\SO(p-1)$ in $\SO(p)\times\SO(p)$ in the following manner:
\begin{align*}
K'\ni k'=
\left[ 
\begin{array}{cccc}
1& & & \\
& k_{0,1} &&\\
&&1& \\
&&& k_{0,2}
\end{array} \right] \in
\left[ 
\begin{array}{cc}
\SO(p) &\\
& \SO(p)
\end{array} \right],\ \ k_{0,1},k_{0,2}\in \SO(p-1).
\end{align*}

Hence, we have (identifying  $\p'$ with its natural embedding  into $\p$)
\begin{align}
V_1:= V_{w'\cdot X'} +\Ad(k_0) V_{Y'}=\p'=\left[ \begin{array}{cc}
0 & B'\\
B'^T& 0
\end{array}
\right]\label{p}
\end{align}
for any $w'\in W'$, where $B'=\left [\begin{array}{c|c}0_{1\times 1}&0_{1\times (p-1)}\\ \hline
0_{(p-1)\times 1}&B''_{(p-1)\times( p-1)}
\end{array}\right]$, and the matrix $B''$ is arbitrary (note that $\p'$ is of dimension $(p-1)^2$).
We must show that for some $k\in K$, the space $V_X+\Ad(k)\,V_Y= \p$.
 
{\bf Step 2.} The element $Y$ is always of the same form, so the next step of the proof is common for all the 4 cases. 
Similarly as in \cite{PGPS_2013}, Step 2 of the proof of Theorem 4.8, case (i), we prove that
for $k_0\in  D'\subset \SO(p-1)\times\SO(p-1)$ the following property holds:

The space $\Ad(k_0)\hbox{span}(N_Y)$ is of dimension
 $p+k-1$ and its elements can be written in the form 
\begin{align*}
\left[\begin{array}{c|ccc}0&
a_1&\dots&a_{p-1}\\ \hline
\tau_1& \\
\vdots&\\
\tau_{p-1-k}&&0\\
a_{p+k-1}&\\
\vdots\\
a_{p}&
\end{array}\right]^s
\end{align*}
with $a_i\in\R$ arbitrary and
$\tau_j=\tau_j(a_1,\dots,a_{p+k-1})$.
We will not need to write explicitly the functions $\tau_j$. 
For the sake of completeness, we give a proof of Step 2.

Step 2 comes from the fact that the action of $\Ad(k_0)$ on the elements of $N_Y$ gives the linearly
independent matrices
\begin{align}
\Ad(k_0)Y_{1,i}=\left [\begin{array}{c|c}0& \beta_{i-1}^T\\ \hline
\alpha_{i-1}& 0
\end{array}\right]^s,\ i=p-k+1,\dots, p,\quad 
Ad(k_0)Z_{1,i}=\left
[\begin{array}{c|c}0& -\beta_{i-1}^T\\ \hline
\alpha_{i-1}&0
\end{array}\right]^s, i=2,\dots, p\label{with}
\end{align}
where the $\alpha_i$'s are the columns of $k_{0,1}$ and the $\beta_i$'s are the columns of $k_{0,2}$. 
Let us write $\alpha_i'$ for a column $\alpha_i$ with the
first $p-1-k$ entries omitted. In order to prove the statement of
Step 2, we must show that the matrices obtained by replacing
$\alpha_i$ by $\alpha_i'$ in (\ref{with})
are still linearly independent. This is equivalent to the linear independence of the matrices
\begin{align}
\left [\begin{array}{c|c}
0& -\beta_i^T\\ \hline
\alpha_i'& 0\\
\end{array}\right]^s,~ i=1,\dots, p-k-1,~~
\left [\begin{array}{c|c}0& \beta_i^T\\ \hline
0& 0\\
\end{array}\right]^s,~ i=p-k,\dots, p-1,~~
\left [\begin{array}{c|c}0&0\\ \hline
\alpha_i'& 0\\
\end{array}\right]^s~ i=p-k,\dots, p-1.
\label{prime}
\end{align}
The matrices in (\ref{prime}) are linearly independent given that the matrix $k_0$ was assumed to satisfy condition (2) of Corollary \ref{TOTAL}.

Observe that contrary to \cite{PGPS_2013}, we have filled the zero margins of the matrix $B'$ asymetrically,
which is why we call this method ``asymmetric''. The reason for doing this will be clear from the structure of the set $N_X$
that we study now.
 
{\bf Step 3.} Let us write the set $N_X$ in the four cases $S_i$:
\begin{enumerate}
\item $S_1$: $N_X= \{Z_{1,2},\ldots, Z_{1,p} \}$
 
\item $S_2$: $N_X= \{Z_{1,2},\ldots, Z_{1,p-1},Y_{1,p} \}$
 
\item $S_3$: $N_X= \{Z_{1,2},\ldots, Z_{1,p-1}, Y_{1,p-1}, Y_{1,p} \}$
 
\item $S_4$: $N_X= \{Z_{1,2},\ldots, Z_{1,p},Y_{1,p} \}$.
\end{enumerate} 
	
We will now use the elements of $N_X$ in order to generate thw missing $p-k-1$ dimensions $\tau_j$ in the margins of $B'$.
We use for this the vectors $Z_{1,2}$, \ldots, $Z_{1,p-k}$ available in all four cases for $k\ge 1$. We proceed as follows:
 
If $\tau_1(1,0,\ldots,0)=-1$, the vector $Z_{1,2}\in N_X$ is unhelpful. We change $X'$ into $X''$ by putting the sign $-$ before the second
and the last term of $X'$. We obtain $X''=w''\cdot X'$ such that $N_X$ contains $Y_{1,2}$ instead of $Z_{1,2}$ and $w''\in W'$ changes two signs of 
$X'$. This manipulation is justified
by the fact that (\ref{VX'VY'p'}) holds for any $w''\in W'$. We repeat this procedure, if needed, whenever $\tau_j( {\bf e}_j)=-1$
and obtain, from elements of $N_{w\cdot X}$ and $\Ad(k_0)\,(N_Y)$,
\begin{align}
\left[\begin{array}{c|ccc}0&
a_1&\dots&a_{p-1}\\ \hline
a_{2\,p-2}& \\
\vdots&\\
a_{p+k-1}&&0\\
\vdots\\
a_p&
\end{array}\right]^s\label{ais}
\end{align}
for $w\cdot X$ with $w\in W$ and where the $a_i$'s are arbitrary.

{\bf Step 4.}   Noting that we have at least one element of $N_X$ that has not been used, either $Y_{1,p}$ or $Z_{1,p}$, combining (\ref{VX'VY'p'}) and (\ref{ais}), we have 
\begin{align}
V_0:=\widetilde{V_{w\cdot X}}+\Ad(k_0)(V_Y)=\left[ 
\begin{array}{c|ccc}
0&*&\dots&*\\ \hline
*&*&\dots&*\\
\vdots&\vdots&\ddots&\vdots\\
*&*&\dots&*
\end{array}
\right]\label{V0}
 \end{align}
where $\widetilde{V_{w\cdot X}}$ corresponds to the all of $V_X$ without using the remaining  $Y_{1,p}$ or $Z_{1,p}$.  To fix things, let us assume that the 
unused element is $Z_{1,p}$, the reasoning being similar if it is $Y_{1,p}$ instead.

The end of the proof is similar to the final step of the proof in \cite{PGPS_2013}. 
Refer to Lemma \ref{calculs} and note that for $t$ small, $\Ad(e^{t\,(Z_{i,j}^+ +\theta\,Z_{i,j}^+)})(\widetilde{V_{w\cdot X}})+\Ad(k_0)(V_Y)=V_0$.  Indeed, the lemma shows that no new element is introduced and, for $t$ small, the dimension is unchanged.  On the other hand, $V_0$ is strictly included in $\Ad(e^{t\,(Z_{i,j}^+ +\theta\,Z_{i,j}^+)})(\langle Z_{1,p}\rangle \cup\widetilde{V_{w\cdot X}})+\Ad(k_0)(V_Y)$ since, still by Lemma \ref{calculs}, a new diagonal element is introduced.  We conclude that for $t$ small enough, $\Ad(e^{t\,(Z_{i,j}^+ +\theta\,Z_{i,j}^+)})(\langle Z_{1,p}\rangle \cup\widetilde{V_{w\cdot X}})+\Ad(k_0)(V_Y)=\p$.  Finally,
\begin{align*}
V_{w\cdot X}+\Ad(e^{-t\,(Z_{i,j}^+ +\theta\,Z_{i,j}^+)}\,k_0)(V_Y)=\p.
\end{align*}

{\bf The case $ X[2,2]^-$, $Y[4]$:} 
This case is awkward since $(X,Y)$ do not have eligible predecessors.  We select $X$ and $ Y$ such that $\mathcal{D}_X=\diag[a,a,b,-b]$ and 
$\mathcal{D}_{ Y}=\diag[c,c,c,c]$ (assuming $a$, $b$, $c\not=0$ and $a\not=b$).  
Then $V_Y$ is generated by the basis $B_Y$ composed of  all the 6 vectors $Z_{i,j}$, $i<j$,  while  the basis $B_X$ of $V_X$ contains the vectors $Y_{1,3}$, $Y_{1,4}$, $Y_{2,3}$, $Y_{2,4}$, $Y_{3,4}$ and all the vectors $Z_{i,j}$ except $Z_{3,4}$.  Note that $|V_X|=10$.

For a root vector $Z_{i,j}^+$ denote $Z_{i,j}^\k=Z_{i,j}^+ +\theta(Z_{i,j}^+)\in\k$. Define 
$Z_0=Z_{1,2}^\k+ Z_{2,3}^\k+Z_{1,4}^\k+Z_{2,4}^\k\in\k$. We denote 
\begin{align*}
F_t&=V_X +\Ad(e^{tZ_0})V_Y=V_X + e^{t\ad Z_0}(V_Y), \\
E_t&=V_X +\langle \{ v + t[Z_0,v]:\ v\in V_Y\} \rangle.
\end{align*}
We will write
\begin{align*}
f_t=\det(B_X,\Ad(e^{tZ_0})B_Y)
\end{align*}
where the elements of $\p$ are seen as column vectors in $\R^{p^2}$.
Analogously, we denote by $e_t$ the determinant constructed in a similar way
from the vectors of $B_X$ and the vectors $ v + t[Z_0,v]$, $v\in B_Y$, belonging to $E_t$. We write $f_t=e_t + r_t$ and we analyse now $e_t$ and $r_t$ in order to show that $f_t\not=0$ for some small nonzero $t$.

Using  Lemma \ref{CasesWithoutPred}, we check  that $e_t=c t^5$ with $c=\det(B_X, Z_{3,4},[Z_0,Z_{1,2}],\ldots, [Z_0,Z_{2,4}])\not =0$.
The coefficient of $t^6$ in $e_t$ equals zero since $\det(B_X,[Z_0,B_Y])=0$. On the other hand it is easy to see in a similar way that the remainder $r_t$ in the analytic expansion
$f_t=e_t + r_t$ does not have terms in $t^n$ for $n<6$. We conclude that $f_t\not=0$   for small nonzero $t$.
\end{proof}

\subsection{Case $u\geq 2$ or $v\geq2$}	

This case is handled in much of the same way as the case $u>0$ or $v>0$ in \cite{PGPS_2013}.  A notable difference is that the basis for induction is the previous case ($u\leq 1$ and $v\leq 1$). For $p=3$, we only need to consider the pair $X[1;2]$, $Y[2,1]$ which has regular predecessors. Similarly, for $p=4$, we see that all  eligible pairs with
$u\geq 2$ or $v\geq 2$ have eligible predecessors. 

In the case $p=5$, because of (\ref{except4}), there are eligible pairs with no eligible predecessors. It suffices to consider  the pair 
$X[5]$, $Y[3;2]$. In order to show that the density exists in this case, we use the same technique as for the case  $ X[2,2]^-$, 
$Y[4]$. We take $Z_0=Z_{1,2}^\k + Z_{2,3}^\k+ Z_{3,4}^\k+ Z_{1,5}^\k+ Z_{2,5}^\k $.
In order to prove that $e_t=c t^9$ with $c\not=0$, we check using Lemma \ref{CasesWithoutPred} that the 9 vectors $[Z_0,Z_{1,2}],\ldots, [Z_0,Z_{3,5}]$ produce the missing vectors $Y_{1,2}$, $Y_{1,3}$, $Y_{2,3}$, $Y_{4,5}$ and the diagonal.

Starting from $p=6$, the induction proof works. The fact that
 the roots $\alpha_i$ defined by $\alpha_i(X)=X_i$ are absent in the case $\SO(p,p)$ does not influence the proof (in \cite{PGPS_2013}  the roots  $\alpha_i$
were not used in Step 4 of the proof).
These two differences being overcome, the proof is sufficiently similar that it should not be repeated here.
\section{Applications}\label{app}
We now extend our results to the symmetric spaces of type $C_p$, i.e. to the complex and quaternion cases.
Recall that $\SU(p,p)$ is the subgroup of ${\bf SL}(2p,\C)$ such that $g^*\,I_{p,p}\,g=I_{p,p}$ 
while $\Sp(p,p)$ is the subgroup of ${\bf SL}(2p,\H)$ such that $g^*\,I_{p,p}\,g=I_{p,p}$. Their respective maximal compact subgroups are ${\bf S}({\bf U}(p)\times {\bf U}(p))$ and 
$\Sp(p)\times \Sp(p)\equiv \SU(p,\H)\times \SU(p,\H)$.
Their subspaces $\p$ can be described as $\left[\begin{array}{cc}0&B\\ B^*&0\end{array}\right]$ where 
$B$ is an arbitrary complex (respectively quaternionic) matrix of size 
$p\times p$. The Cartan subalgebra $\a$ is chosen in the same way as for $\so(p,p)$, with real entries in the diagonal.

\begin{remark}\label{complex}
The following table is helpful in showing the differences and similarities between $\SO(p,p)/\SO(p)\times \SO(p)$, $\SU(p,p)/{\bf S}({\bf U}(p)\times{\bf U}(p))$ and $\Sp(p,p)/\Sp(p)\times \Sp(p)$ (the ``real'', ``complex'' and ``quaternionic'' cases). 
\begin{align*}
\begin{array}{|l|c|c|c|c|}\hline
&{\scriptstyle \SO(p,p)/\SO(p)\times \SO(p)}&{\scriptstyle \SU(p,p)/{\bf S}({\bf U}(p)\times{\bf U}(p))}
&{\scriptstyle \Sp(p,p)/\Sp(p)\times \Sp(p)}\\ \hline
\hbox{Root system}&D_p&C_p&C_p\\\hline
\parbox[t]{2.5cm}{$m_\alpha$ where\\ $\alpha(X)\\=X_i-X_j$, $i<j$}&1&2&4\\ \hline
\parbox[t]{2.5cm}{$m_\alpha$ where\\ $\alpha(X)\\=X_i+X_j$, $i<j$}&1&2&4\\ \hline
\parbox[t]{2.5cm}{$m_\alpha$ where\\ $\alpha(X)=2\,X_i$,\\ $i=1$, \dots, $p$}&0&1&3\\ \hline
\hbox{Dimension of $\p$}&p^2&2\,p^2&4\,p^2\\\hline
\hbox{\parbox[t]{2.5cm}{Action of the Weyl group on $X\in \a$}}
&\hbox{\parbox[t]{2.5cm}{Permutes the diagonal entries of $\mathcal{D}(X)$ and changes any pair of signs}}
&\hbox{\parbox[t]{2.5cm}{Permutes the diagonal entries of $\mathcal{D}(X)$ and changes any sign}}
&\hbox{\parbox[t]{2.5cm}{Permutes the diagonal entries of $\mathcal{D}(X)$ and changes any sign}}
\\\hline
\end{array}
\end{align*}
\end{remark}

\begin{theorem}\label{iff}
Consider the symmetric spaces 
$\SU(p,p)/{\bf S}({\bf U}(p)\times{\bf U}(p))$ and $\Sp(p,p)/{\bf
Sp}(p)\times\Sp(p)$.
Let $X=X[{\bf s};u]$ and $Y=Y[{\bf t};v]\in\a$. Then the measure $\delta_{e^X}^\natural \star
\delta_{e^Y}^\natural$ is absolutely continuous if and only if $\max({\bf s}, 2u) + \max({\bf t}, 2v) \le 2\,p$.
\end{theorem}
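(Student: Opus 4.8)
The plan is to reduce the complex and quaternionic cases to the structure already analyzed for $\SO(p,p)$, exploiting the fact that—as the table in Remark \ref{complex} makes explicit—the Weyl group in the $C_p$ case permutes the diagonal entries of $\mathcal{D}_X$ and changes \emph{any} sign, exactly as for the spaces $\SO(p,q)$ with $q>p$. Consequently the notion of ``relative'' collapses: every $X\in\a$ is Weyl-equivalent to an element of $\overline{\a^+}$ with all nonnegative entries, so configurations of the form $[{\bf s}]^-$ never occur and the anomalous $p=4$ exclusion (\ref{except4}) disappears. This is why the criterion becomes the single clean inequality $\max({\bf s},2u)+\max({\bf t},2v)\le 2p$.

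\textbf{Necessity.} First I would establish that absolute continuity of $m_{X,Y}$ is still equivalent to $Ke^XKe^YK$ having nonempty interior, and that $V_X+\Ad(k)V_Y=\p$ for some $k$ remains the working criterion (Corollary \ref{VXVY}); the proofs of Proposition \ref{KKK} and Lemmas \ref{3.2[7]}, \ref{relatives} are insensitive to the field $\F$. For the necessity direction, the key tool is again Lemma \ref{repeat} applied with $N=2p$ to $\tilde a(e^{S^TXS}\,(S^Tk S)\,e^{S^TYS})$, where now $S^Tk S\in\SU(2p,\F)$ since $S\in\SO(2p)\subset\SU(2p,\F)$. If $\max({\bf s},2u)+\max({\bf t},2v)>2p$, the same counting argument as in Proposition \ref{ness} (using $r-s=2u+2v-2p$, or $2u+t-2p$, etc., noting that a repetition of a diagonal entry in $\tilde a$ now forces a repetition among the $H_i$ with appropriate multiplicities in the $C_p$ Cartan picture) shows that some diagonal entry of $\mathcal{D}_H$ is forced for all $H\in a(e^XKe^Y)$, so the interior is empty. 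One must be slightly careful that the root $2X_i$ is present in the $C_p$ case, but this only helps: the forced repetitions are still detected exactly as before.

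\textbf{Sufficiency.} Here the cleanest route is to observe that the symmetric spaces of type $C_p$ contain, in a suitable sense, the $D_p$ geometry together with extra roots, so the spaces $V_X$ are strictly larger than in the real case while $\p$ grows only by a bounded factor; more precisely, I would run the induction of Theorem \ref{existenceNOZEROS} and its $u\ge 2$ counterpart verbatim, with the simplifications that (a) there are no $[{\bf s}]^-$ configurations and no exceptional set $\mathcal{E}$ to exclude, so Lemma \ref{FOND} reduces to the single family $X[p],Y[p-k,k]$ plus $X[p-k,k],Y[p-k',k']$, and (b) the basis $N_Y$ and $N_X$ in Steps 1–4 must be enlarged to include the root vectors attached to $2X_i$ and the extra copies of the root vectors for $X_i\pm X_j$ coming from the field extension. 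Since the condition (\ref{2p-2}) is now replaced by $\max({\bf s})+\max({\bf t})\le 2p$ (no $-2$), one gets an extra $p$-dimensional worth of freedom precisely matching the extra roots $2X_i$, and the asymmetric margin-filling of Step 3 goes through. The base cases $p=2,3$ are checked directly using $|V_X|+|V_Y|\ge\dim\p$, which for $\F=\C,\H$ reads $2p^2$ resp. $4p^2$.

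\textbf{Main obstacle.} The delicate point is verifying that Lemma \ref{calculs} and Lemma \ref{CasesWithoutPred}—the explicit bracket computations that drive Step 4 and the two predecessor-free cases—carry over with the correct nonzero coefficients when the root spaces have dimension $2$ or $4$. One cannot simply quote the $\so(p,p)$ table; instead I would argue structurally: for each relevant pair of roots $\alpha,\beta$ with $\alpha+\beta$ a root, $[\g_\alpha,\g_\beta]$ is nonzero and lands in $\g_{\alpha+\beta}$, and since we only ever need that a \emph{new} basis vector (or a new component in $\a$) is produced—not its precise scalar—the multiplicity does not interfere, provided one picks the symmetrized vectors $X_\alpha^s$ compatibly across the copies. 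Making this ``only nonvanishing matters'' principle airtight for the highest-degree term $e_t=ct^{\,2\binom{p}{2}-|\text{used}|}$ with $c\ne0$ in the predecessor-free cases is where I expect the real work to lie; everything else is a transcription of \cite{PGPS_2013} and Section \ref{Suff} with the single inequality $2p$ in place of $2p-2$.
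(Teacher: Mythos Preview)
Your necessity argument is correct and coincides with the paper's: Lemma \ref{repeat} applied over $\F=\C$ or $\H$ after conjugation by $S$ gives the same repetition count as in Proposition \ref{ness}.

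For sufficiency, however, the paper takes a much shorter route than yours. The key observation you do not exploit is the chain of inclusions
\[
\SO(p)\times\SO(p)\ \subset\ {\bf S}({\bf U}(p)\times{\bf U}(p))\ \subset\ \Sp(p)\times\Sp(p),
\]
all sharing the same $\a$. Hence $a(e^X K_{\R} e^Y)\subset a(e^X K_{\C} e^Y)\subset a(e^X K_{\H} e^Y)$, so nonempty interior in the real case forces it in the complex and quaternionic cases. Every pair already eligible for $D_p$ therefore comes for free from Theorem \ref{existenceNOZEROS}, and one only has to treat the pairs eligible for $C_p$ but \emph{not} for $D_p$: these are exactly $(X[p],Y[p])$, $(X[p],Y[p-1;1])$, and the $p=4$ exception $(X[4],Y[2;2])$. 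The last two have eligible $C_p$-predecessors, and $(X[p],Y[p])$ is handled by a short explicit linear-independence computation (taking $k_{0,1}=i\,I_{p-1}$, $k_{0,2}=-i\,I_{p-1}$) with induction base $p=1$, which is rank one.

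This bootstrap also dissolves what you flag as the ``main obstacle'': since every new case has eligible predecessors (or reduces to rank one), the predecessor-free machinery of Lemma \ref{CasesWithoutPred} is never invoked in the $C_p$ proof, and there is no need to redo bracket computations in multiplicity $2$ or $4$. Your plan of rerunning the full induction with enlarged $N_X$, $N_Y$ would work---the dimension count $|N_X|+|N_Y|=2(2p-1)=\dim\p-\dim\p'$ for $(X[p],Y[p])$ is exactly right---but it is considerably more laborious. One slip to note: checking the base cases ``using $|V_X|+|V_Y|\ge\dim\p$'' is not enough, as that inequality is only necessary (Corollary \ref{p2}); you would still need an explicit verification there, which the paper sidesteps entirely by starting the induction at $p=1$.
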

\begin{proof}
Let $X$, $Y\in\a$. Note that since 
\begin{align*}
 a(e^X\,{\bf S}({\bf U}(p)\times{\bf U}(p))\,e^Y)\subset a(e^X\,(\Sp(p)\times\Sp(p))\,e^Y),
\end{align*}
if the density exists in the complex case, it also exists in the quaternionic case. 
On the other hand, given Lemma \ref{repeat}, one can reproduce Proposition \ref{ness} using $\F=\C$ and $\F=\H$ to show that the condition is necessary in 
the complex and quaternionic cases. 

However, the root structure is richer in the complex and quaternionic cases compared to the real cases.  The existence of the roots $\alpha(X)=2\,X_k$ makes the  complex and quaternionic cases very similar to the case $q>p$.  

It clearly suffices to prove the result in the complex case.  The involution $\theta$ is given by $\theta(X)=-X^*$ and the positive root vectors are generated 
by $X_k^+=\left[\begin{array}{c|c}-i\,E_{k,k}&i\,E_{k,k}\\\hline-i\,E_{k,k}&i\,E_{k,k}\end{array}\right]$ for the root $\alpha(H)=2\,H_k$, 
by $Y_{r,s}^+=\left[\begin{array}{c|c}E_{r,s}-E_{s,r}&E_{r,s}+E_{s,r}\\\hline E_{r,s}+E_{s,r}&E_{r,s}-E_{s,r}\end{array}\right]$,
$Y_{r,s,{\bf C}}^+=\left[\begin{array}{c|c}i\,(E_{r,s}+E_{s,r})&i\,(E_{r,s}-E_{s,r})\\\hline i\,(E_{r,s}-E_{s,r})&i\,(E_{r,s}+E_{s,r})\end{array}\right]$for the root $\alpha(H)=H_r-H_s$ and by
$Z_{r,s}^+=\left[\begin{array}{c|c}E_{r,s}-E_{s,r}&E_{s,r}-E_{r,s}\\\hline E_{r,s}-E_{s,r}&E_{s,r}-E_{r,s}\end{array}\right]$, 
$Z_{r,s,{\bf C}}^+=\left[\begin{array}{c|c}-i\,(E_{r,s}+E_{s,r})&i\,(E_{r,s}+E_{s,r})\\\hline-i\,(E_{r,s}-E_{s,r})&i\,(E_{r,s}+E_{s,r})\end{array}\right]$ 
for the root $\alpha(H)=H_r+H_s$ (here the matrices $E_{r,s}$ are of size $p\times p$).

Taking into account the fact that if the density exists in the real case, it also exists in the complex case, we only have a few cases to verify.  Given that changing any sign of a diagonal element of $\mathcal{D}_X$, $X\in\a$, is a Weyl group action, we can always assume that all entries of $\mathcal{D}_X$ are non-negative.  The configuration $[s]^-$ thus disappears. 

We will need to show that the cases $(X[p],Y[p])$, $(X[p],Y[p-1;1])$ and $(X[4],Y[2;2])$ all have a density.  We will use the case $p=1$ which is of rank 1 as the inductive step (there is nothing to prove for that case).

For the case $X[p]$, $Y[p]$, $p>1$, we proceed much as in \cite{PGPS_2013}.  We note here a few differences.  If $k_0\in {\bf S}({\bf U}(p)\times{\bf U}(p))$ with 
$k_0=
\left[ 
\begin{array}{cccc}
1& & & \\
& k_{0,1} &&\\
&&1& \\
&&& k_{0,2}
\end{array} \right] \in
{\bf S}({\bf U}(p)\times{\bf U}(p))$ then 
\begin{align}
\Ad(k_0)\,(Z_{1,k})&=\left [\begin{array}{c|c}
0& -\beta_{k-1}^*\\ \hline 
\alpha_{k-1}&0
\end{array}\right]^s,
\Ad(k_0)\,(Z_{1,k,{\bf C}})=\left [\begin{array}{c|c}
0& i\,\beta_{k-1}^*\\ \hline 
i\,\alpha_{k-1}&0
\end{array}\right]^s,
~k=1,\dots,p-1\ \ \hbox{and}~\label{S1}\\
\Ad(k_0)\,(X_{1})&=\left [\begin{array}{c|c}
i& 0\\ \hline 
0&0
\end{array}\right]^s.\nonumber
\end{align}

Given that 
\begin{align}
Z_{1,j}=\left [\begin{array}{c|c}
0& -{\bf e}_{j-1}^T\\ \hline 
{\bf e}_{j-1}&0
\end{array}\right]^s, Z_{1,j,{\bf C}}=\left [\begin{array}{c|c}
0& i\,{\bf e}_{j-1}^T\\ \hline 
i\,{\bf e}_{j-1}&0
\end{array}\right]^s,\quad ~j=2,\dots, p,\label{Z1}
\end{align}
we want to show that the matrices in (\ref{S1}) together with
those of (\ref{Z1}) are linearly independent for a $k_0\in{\bf S}({\bf U}(p-1)\times{\bf U}(p-1))$
for which the equality (\ref{VX'VY'p'}) holds. 
Note that if $k_{0,1}=i\,I_{p-1}$, $k_{0,2}=-i\,I_{p-1}$  then the matrices in (\ref{S1}) and (\ref{Z1}) are linearly independent. Since the linear independence is based on a determinant being nonzero, this implies
that the set of matrices $k_0$ for which this is true is open and dense in ${\bf S}({\bf U}(p-1)\times{\bf U}(p-1))$.  
We conclude that if $N_X'=N_X\backslash \{X_1\}$ then ${\rm span}\,(N_X' + V_{X'}) + \Ad(k'_0) V_Y$ has the form
$\scriptstyle \left[ 
\begin{array}{c|ccc}
i\,a&*&\dots&*\\ \hline
*&*&\dots&*\\
\vdots&\vdots&\ddots&\vdots\\
*&*&\dots&*
\end{array}
\right]$ where the $*$'s represent arbitrary complex numbers and $a$ is an arbitrary real number. 

We finish the proof as in the case $\SO(p,p)$ using the vector $X_1^+$. 

The case $(X[p],Y[p-1;1])$ has eligible predecessors  $(X'[p-1],Y'[p-1])$.  We then have $N_X=\{Z_{1,k},Z_{1,k,{\bf C}}, X_1\}$ and 
$N_Y=\{Z_{1,k},Z_{1,k,{\bf C}}, Y_{1,k},Y_{1,k,{\bf C}}\}$.  The rest follows easily.

Finally, the case $(X[4],Y[2;2])$ has predecessors $(X'[3],Y'[2;1])$ which are eligible.
\end{proof}
We will conclude this paper with two further applications.
\begin{prop}
Let $X$ and $Y\in\a$ be such that
$\left(\delta_{e^X}^\natural\right)^{*2}$ and 
$\left(\delta_{e^Y}^\natural\right)^{*2}$ are absolutely continuous. Then 
$\delta_{e^X}^\natural*\delta_{e^Y}^\natural$ is absolutely continuous.
\end{prop}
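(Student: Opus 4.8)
The plan is to translate the hypotheses, via Theorem~A (for $\SO_0(p,p)$) and via Theorem~\ref{iff} (for the complex and quaternionic cases), into purely combinatorial statements about the configurations of $X$ and $Y$, and then to check by inspection that these statements force the pair $(X,Y)$ to be eligible in the sense of Definition~\ref{defEligible}. First I would record the identifications $\delta_{e^X}^\natural\star\delta_{e^Y}^\natural=m_{X,Y}$ and $\left(\delta_{e^Z}^\natural\right)^{*2}=m_{Z,Z}$, so that by Theorem~A the hypothesis ``$\left(\delta_{e^Z}^\natural\right)^{*2}$ is absolutely continuous'' means exactly that $(Z,Z)$ is an eligible pair. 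Writing the configuration of $Z$ as $[{\bf r};w]$ or $[{\bf r}]^-$, I note that condition~(\ref{except4}) concerns an unordered pair of \emph{two distinct} diagonal matrices, hence holds automatically for $\{\mathcal{D}_Z,\mathcal{D}_Z\}$; so eligibility of $(Z,Z)$ is exactly the instance of (\ref{cas1})--(\ref{cas2}) in which $X$ and $Y$ are both taken equal to $Z$, namely: \emph{either $w\le 1$ and $\max({\bf r})\le p-1$, or $w\ge 2$ and $\max({\bf r},2w)\le p$.} Call this property $(\star)$; by hypothesis both $X$ (with configuration $[{\bf s};u]$ or $[{\bf s}]^-$) and $Y$ (with configuration $[{\bf t};v]$ or $[{\bf t}]^-$) satisfy $(\star)$.

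Next I would deduce that $(\star)$ for $X$ together with $(\star)$ for $Y$ forces $(X,Y)$ to be eligible. If $u\le 1$ and $v\le 1$, then $(\star)$ gives $\max({\bf s})\le p-1$ and $\max({\bf t})\le p-1$, hence $\max({\bf s})+\max({\bf t})\le 2p-2$ and (\ref{cas1}) holds. If instead $u\ge 2$ or $v\ge 2$ --- say $u\ge 2$, the remaining case being symmetric --- then $(\star)$ gives $\max({\bf s},2u)\le p$, and moreover $\max({\bf t},2v)\le p$: directly from $(\star)$ when $v\ge 2$, and, when $v\le 1$, from $\max({\bf t},2v)=\max(\max({\bf t}),2v)\le\max(p-1,2)\le p$ (valid for all $p\ge 2$). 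Hence $\max({\bf s},2u)+\max({\bf t},2v)\le 2p$ and (\ref{cas2}) holds. In both situations condition (\ref{except4}) is satisfied automatically, because every pair it forbids contains an element whose configuration is $[4]$ or $[4]^-$, and such an element has $w=0$ and $\max({\bf r})=4=p>p-1$, contradicting $(\star)$. Thus $(X,Y)$ is eligible, and Theorem~A yields the absolute continuity of $\delta_{e^X}^\natural\star\delta_{e^Y}^\natural$.

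For the symmetric spaces of type $C_p$ the argument is shorter: by Theorem~\ref{iff}, absolute continuity of $\left(\delta_{e^Z}^\natural\right)^{*2}=m_{Z,Z}$ is equivalent to $2\max({\bf r},2w)\le 2p$, i.e.\ $\max({\bf r},2w)\le p$; adding the inequalities obtained for $X$ and for $Y$ gives $\max({\bf s},2u)+\max({\bf t},2v)\le 2p$, and a second application of Theorem~\ref{iff} concludes. I do not anticipate any genuine obstacle: the content is entirely bookkeeping, and the only point requiring a line of care is the verification that (\ref{except4}) can never be triggered under the hypotheses, which is immediate once one observes that every forbidden pair involves a $[4]$ or $[4]^-$ configuration incompatible with $(\star)$.
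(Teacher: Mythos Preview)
Your argument is correct and is essentially the approach the paper has in mind: since the proposition appears after Theorem~A and Theorem~\ref{iff} have been established, the intended proof (deferred to \cite{PGPS_2013}) is precisely this reduction to the eligibility criterion followed by the elementary combinatorial check you carry out. Your handling of condition~(\ref{except4}) --- both that it is vacuous for the self-pair $(Z,Z)$ and that it cannot be triggered by $(X,Y)$ because every forbidden pair contains a $[4]$ or $[4]^-$ configuration incompatible with $(\star)$ --- is the only point requiring care, and you treat it correctly.
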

\begin{proof}
The proof is very similar to the one found in \cite{PGPS_2013}.
\end{proof}

In previous papers, we have studied a related question: if $X\in\a$ and $X\not=0$, for which convolution powers $l$
is the measure $\left(\delta_{e^X}^\natural\right)^{l}$ absolutely continuous?
This problem is equivalent to the study of the absolute continuity of convolution powers of uniform orbital measures $\delta_{g}^\natural=m_K*\delta_{g}*m_K$
for $g\not\in K$.
It was proved in \cite[Corollary 7]{PGPS_Fun2010} that it is always the case for $l\ge r+1$, where
$r$ is the rank of the symmetric space $G/K$. It was also shown in \cite{PGPS_2013}
that $r+1$ is optimal for this property for symmetric spaces of type $A_n$ (\cite[Corollary 18]{PGPS_Fun2010})
but this is not the case for the symmetric spaces of type $B_p$ where $r$ was shown to be sufficient in \cite{PGPS_2013}.

\begin{prop}
If $p=3$ and $\mathcal{D}_X=\diag[a,a,a]$, $a>0$, then $(\delta_{e^X}^\natural)^3$ is not absolutely continuous in \\
 $\SO(p,p)/\SO(p)\times \SO(p)$ while $(\delta_{e^X}^\natural)^4$ is absolutely continuous.
\end{prop}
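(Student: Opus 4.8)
The statement has two halves. Absolute continuity of $\left(\delta_{e^X}^\natural\right)^4$ is a special case of \cite[Corollary 7]{PGPS_Fun2010}: the rank of $\SO(3,3)/\SO(3)\times\SO(3)$ is $r=3$, so that result yields absolute continuity of $\left(\delta_{e^X}^\natural\right)^l$ for every nonzero $X$ as soon as $l\ge r+1=4$. Thus the substance of the proposition is that $\left(\delta_{e^X}^\natural\right)^3$ is \emph{not} absolutely continuous; this is exactly where $\mathcal{D}_X=\diag[a,a,a]$ is special.

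First I would record the relevant identifications. Identify $\p$ with $M_3(\R)$ via $Y\mapsto Y^s$, so that $\g/\k\cong M_3(\R)$ as $K$-modules. For $X=X[3]$ the space $V_X=\langle Z_{1,2},Z_{1,3},Z_{2,3}\rangle$ is exactly the subspace $\so(3)\subset M_3(\R)$ of antisymmetric matrices, and $\Ad(k)$ for $k\in\SO(3)\times\SO(3)$ with blocks $A_1,A_2$ acts on $\p\cong M_3(\R)$ by $B\mapsto A_1BA_2^{-1}$, whence $\Ad(k)V_X=\so(3)\!\cdot\! g_k$ with $g_k=A_1A_2^{-1}$ arbitrary in $\SO(3)$. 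A short computation using $e^X=(\cosh a)\,I_6+(\sinh a)\left[\begin{smallmatrix}0&I_3\\ I_3&0\end{smallmatrix}\right]$ shows that the class in $\g/\k\cong M_3(\R)$ of $\Ad(e^X)$ applied to $\left[\begin{smallmatrix}0&B\\ B^T&0\end{smallmatrix}\right]$ equals $B_s+\cosh(2a)B_a$, where $B=B_s+B_a$ is the symmetric/antisymmetric decomposition; write $\Psi$ for this linear automorphism of $M_3(\R)$ and note $\pi_{\mathrm{Sym}}\circ\Psi=\pi_{\mathrm{Sym}}$, where $\pi_{\mathrm{Sym}}$ is the projection onto the $6$-dimensional space $\mathrm{Sym}(3)$ of symmetric matrices. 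Finally $\Ad(e^X)\k\subset\k+V_X$ by Lemma~\ref{3.2[7]}.

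By (the evident generalisation of) Proposition~\ref{KKK}, $\left(\delta_{e^X}^\natural\right)^3$ is absolutely continuous iff the derivative of $T(k_0,k_1,k_2,k_3)=k_0e^Xk_1e^Xk_2e^Xk_3$ is onto for some choice of arguments. Computing $dT$ as in Proposition~\ref{U} and iterating, then absorbing the three factors $e^X$ one at a time via Lemma~\ref{3.2[7]}, this surjectivity becomes equivalent to: there exist $g_1,g_2\in\SO(3)$ with $\p=\so(3)+\so(3)\!\cdot\! g_1+\Psi(\so(3)\!\cdot\! g_2)$. Since $\pi_{\mathrm{Sym}}(\so(3))=0$ and $\pi_{\mathrm{Sym}}\circ\Psi=\pi_{\mathrm{Sym}}$, applying $\pi_{\mathrm{Sym}}$ turns this into the necessary condition
\begin{align*}
\mathrm{Sym}(3)=\Sigma_{m_1}+\Sigma_{m_2},\qquad \Sigma_m:=\pi_{\mathrm{Sym}}(\so(3)\!\cdot\! g),
\end{align*}
$g$ being a rotation about the unit axis $m$; one checks that $\Sigma_m$ depends only on $m$, equalling (for rotation angle in $(0,\pi)$) the $3$-dimensional span of $mm^T-I$ and of the matrices $um^T+mu^T$ with $u\perp m$, and being smaller for angle $0$ or $\pi$. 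The main obstacle I expect is precisely this reduction: setting up the iterated surjectivity criterion and keeping track of the three exponentials correctly.

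It then remains to rule out $\mathrm{Sym}(3)=\Sigma_{m_1}+\Sigma_{m_2}$ for all unit vectors $m_1,m_2$. With respect to the trace form one has $\Sigma_m^{\perp}=\{Q\in\mathrm{Sym}(3):Qm\in\R m,\ \operatorname{tr}(Q|_{m^{\perp}})=0\}$, so it suffices to show $\Sigma_{m_1}^{\perp}\cap\Sigma_{m_2}^{\perp}\ne\{0\}$ always. This is elementary: if $m_1\parallel m_2$ it is clear; if $m_1\perp m_2$ then $I-2\,nn^T$ with $n=m_1\times m_2$ lies in the intersection; and if $m_1,m_2$ are independent but not orthogonal, any symmetric $Q$ having both as eigenvectors carries a repeated eigenvalue $\mu$ on $\operatorname{span}(m_1,m_2)$, whence both trace conditions become $\mu+\nu=0$ for the remaining eigenvalue $\nu$ and $2P-I$ (with $P$ the orthogonal projection onto $\operatorname{span}(m_1,m_2)$) lies in the intersection. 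Hence $\Sigma_{m_1}+\Sigma_{m_2}\subsetneq\mathrm{Sym}(3)$ and $\left(\delta_{e^X}^\natural\right)^3$ has no density. (For the $l=4$ assertion one could instead check directly that three spaces $\Sigma_{m_1},\Sigma_{m_2},\Sigma_{m_3}$ can span $\mathrm{Sym}(3)$ — e.g. $m_1=e_1$, $m_2=e_2$, $m_3$ generic — re-deriving the cited result by hand.)
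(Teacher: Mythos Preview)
Your argument is correct, and for the cube it is in fact more complete than the paper's own proof.

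For $\left(\delta_{e^X}^\natural\right)^4$ the paper argues constructively: it observes that $X'\in a(e^XKe^X)$ with $\mathcal{D}_{X'}=\diag[2a,a,a]$ (configuration $[2,1]$), so $(X',X')$ is an eligible pair by Theorem~A, and hence $a(e^XKe^XKe^XKe^X)\supset a(e^{X'}Ke^{X'})$ has nonempty interior. Your appeal to \cite[Corollary 7]{PGPS_Fun2010} is equally valid and shorter; the paper's route has the minor advantage of being self-contained.

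For the cube, the paper writes the image of $dT$ as $k_1e^Xk_2\bigl(\Ad(k_2^{-1})U_{-X}+\Ad(e^X)U_X\bigr)\cdot(\dots)$ and bounds its dimension by $|\k|+2|V_X|=12<15$. But this displayed formula does not involve $k_3$; tracking all four terms of $dT$ as you do, the translated image is $\k+V_X+\Ad(k_2^{-1})V_X+\Psi(\Ad(k_3)V_X)$, with naive bound $|\k|+3|V_X|=15=\dim\g$, so a crude dimension count is inconclusive. (Since $\Ad(e^X)U_X=U_X$ for this $X$, the $\Ad(e^X)$ in the paper's formula contributes nothing and cannot be standing in for the missing $k_3$-term.) Your reduction via $\pi_{\mathrm{Sym}}$ to $\mathrm{Sym}(3)=\Sigma_{m_1}+\Sigma_{m_2}$ is exactly the extra idea needed: the identification of $\Sigma_m^\perp$ as $\{Q:Qm\in\R m,\ \operatorname{tr}(Q|_{m^\perp})=0\}$ is correct, and your three witnesses ($\Sigma_m^\perp$ itself when $m_1\parallel m_2$; $I-2nn^T$ when $m_1\perp m_2$; $2P-I$ otherwise) verify $\Sigma_{m_1}^\perp\cap\Sigma_{m_2}^\perp\neq\{0\}$ in every case. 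The fact that $\Sigma_m$ is the full $3$-space only for rotation angle in $(0,\pi)$ is harmless, since smaller $\Sigma$'s have larger orthogonal complements. So your route genuinely closes a gap that the paper's sketch leaves open.
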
 

\begin{proof}
Computing the derivative of the map $T(k_1,k_2,k_3,k_4)=k_1\,e^X\,k_2\,e^X\,k_3\,e^X\,k_4$ at $(k_1,k_2,k_3,k_4)$ 
as in (\ref{A}), we obtain 
\begin{align*}
k_1\,e^X\,k_2\,(\Ad(k_2^{-1})\,U_{-X}+\Ad(e^X)\,U_X)\,e^X\,k_3\,k_4=
k_1\,e^X\,k_2\,(\k+\Ad(k_2^{-1})\,V_{-X}+\Ad(e^X)\,V_X)\,e^X\,k_3\,e^X\,k_4.
\end{align*}

The dimension of this space is at most $|\k|+|V_{-X}|+|V_X|=|\k|+3+3<|\k|+|\p|=|\g|$ so the map cannot be surjective.

On the other hand, $X'$ such $\mathcal{D}_{X'}=\diag[2\,a,a,a]$ belongs to $a(e^X\,K\,e^X)$ from what precedes (taking $x=a$) and the pair $(X',X')$ is eligible. 
From there, we conclude that $a(e^X\,K\,e^X\,K\,e^X\,K\,e^X)$ has nonempty interior.
\end{proof}

\begin{prop}
If $p=4$ and $\mathcal{D}_X=\diag[a,a,a,a]$, $a>0$, then $(\delta_{e^X}^\natural)^3$ is not absolutely continuous in $\SO(p,p)/\SO(p)\times \SO(p)$ while $(\delta_{e^X}^\natural)^4$ is absolutely continuous.  Consequently, if $X=X[{\bf s};u]$ with $u\geq 1$ or $X=X[{\bf s}]^-$ then 
$(\delta_{e^X}^\natural)^4$ is absolutely continuous.
\end{prop}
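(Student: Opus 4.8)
The plan is to mirror the structure of the $p=3$ case, using the "replace the first diagonal entry by a larger value, then invoke eligibility" trick, but with $p=4$ and the more delicate eligibility constraints coming from (\ref{except4}). First I would establish the negative statement: computing the derivative of the map $T(k_1,k_2,k_3)=k_1\,e^X\,k_2\,e^X\,k_3$ (or, more conveniently, the map with four factors as in the $p=3$ proposition) at a point, and reducing via the computation in (\ref{A}) as in Proposition \ref{U}, the image space has dimension at most $|\k|+|V_{-X}|+|V_X|$. For $\mathcal{D}_X=\diag[a,a,a,a]$ the configuration is $[4]$, so $|V_X|=\binom{4}{2}=6$, whence $|V_{-X}|+|V_X|=12<16=\dim\p=p^2$. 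Hence the derivative of $T$ is nowhere surjective, so by Proposition \ref{KKK} the support of $(\delta_{e^X}^\natural)^3$ has empty interior and the measure is not absolutely continuous.

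For the positive statement, I would show that $a(e^X\,K\,e^X)$ contains an element $X'$ with $\mathcal{D}_{X'}=\diag[2a,a,a,a]$ (or more generally $\diag[x,a,a,a]$ with $x>a$): this follows because $e^X\,K\,e^X$ is not contained in a single $KAK$-fibre, and by the explicit singular value computation of Remark \ref{S} one can realize, for suitable $k\in K$, the Cartan projection $\diag[\log x,\log a,\log a,\log a]$ with $x$ slightly larger than $a$. Then $X'$ has configuration $[3;1]$ (i.e. $[1,3]$ after reordering), so $\max({\bf s})=3$ for $X'$ and the pair $(X',X')$ satisfies $\max({\bf s})+\max({\bf t})=6\le 2p-2=6$, and it is not one of the exceptional pairs in (\ref{except4}) since both elements have the same configuration $[3;1]$, not $\{[4],[(2,2)]\}$. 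Hence $(X',X')$ is eligible, so by Theorem A (more precisely Theorem \ref{existenceNOZEROS}) the measure $\delta_{e^{X'}}^\natural\star\delta_{e^{X'}}^\natural$ has a density, i.e. $a(e^{X'}\,K\,e^{X'})$ has nonempty interior. Since $e^{X'}\in Ke^XKe^XK$, we get $a(e^X\,K\,e^X\,K\,e^X\,K\,e^X)\supset a(e^{X'}\,K\,e^{X'})$ up to $K$-translations, so it has nonempty interior and $(\delta_{e^X}^\natural)^4$ is absolutely continuous by Proposition \ref{KKK}(i).

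For the final clause, suppose $X=X[{\bf s};u]$ with $u\ge 1$ or $X=X[{\bf s}]^-$. By Lemma \ref{relatives}(5), after passing to a relative (which by Corollary \ref{relativesReduction} does not change absolute continuity of the relevant convolutions, and one checks it also does not affect absolute continuity of powers, arguing as in Lemma \ref{cousin} with the element $J_0$), we may assume $\mathcal{D}_X$ has nonnegative entries, so $X=X[{\bf s};u]$ with all $x_i\ge 0$. If $X$ has at least two nonzero distinct values, or a nonzero value with multiplicity at most $2$, then the configuration of $X$ is already finer than $[4]$ and $V_X$ strictly contains the $V$-space of some element of configuration $[4]$; running the same argument with the pair $(X,X)$, whose eligibility is easier (since $\max {\bf s}\le 3$ forces $(X,X)$ eligible and non-exceptional), already gives that $(\delta_{e^X}^\natural)^2$, hence a fortiori $(\delta_{e^X}^\natural)^4$, is absolutely continuous. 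The only remaining case up to relatives and scaling is $\mathcal{D}_X=\diag[a,a,a,a]$ with $a>0$, i.e. exactly the case just treated, except possibly $\mathcal{D}_X=\diag[a,a,a,0]$ which is a relative of $\diag[a,a,a,a]$ after replacing the zero entry — more precisely $[3;1]$, which is finer than $[4]$ and eligible against itself, so $(\delta_{e^X}^\natural)^2$ is already absolutely continuous. In every case $(\delta_{e^X}^\natural)^4$ is absolutely continuous.

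I expect the main obstacle to be the rigorous verification that $a(e^X\,K\,e^X)$ genuinely contains an element of configuration $[3;1]$ (equivalently $\diag[x,a,a,a]$ with $x>a$) rather than merely something with a larger top entry but possibly other coincidences forced — one must check via Remark \ref{S} and a one-parameter family $k\in K$ that the relevant singular values move to a generic-enough position, which is the analogue of the step "$X'$ such that $\mathcal{D}_{X'}=\diag[2a,a,a]$ belongs to $a(e^X Ke^X)$" used without elaboration in the $p=3$ proposition, and the same brief justification applies here.
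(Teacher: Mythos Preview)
Your argument has genuine gaps in both directions, and the paper's proof proceeds quite differently.

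\textbf{The negative part.} Your dimension bound $|\k|+|V_{-X}|+|V_X|$ is \emph{not} a valid upper bound for the rank of $dT$ when $T$ is the four-factor map $T(k_1,k_2,k_3,k_4)=k_1e^Xk_2e^Xk_3e^Xk_4$. Carrying out the reduction in Proposition~\ref{U} carefully, the image is (up to conjugation) $U_{-X}+\Ad(k_2e^Xk_3e^X)(U_{-X})$; the inner $\Ad(e^X)$ applied to $\Ad(k_3)V_X\subset\p$ does \emph{not} stay in $\k+V_X$, so one cannot collapse the expression to $\k+V_X+\Ad(k)V_X$. A decisive check: for $p\ge5$ and $X=X[p]$ one has $|\k|+2|V_X|=p(p-1)+p(p-1)=2p(p-1)<2p^2-p=|\g|$, yet the paper proves $(\delta_{e^X}^\natural)^3$ \emph{is} absolutely continuous in that range --- so your bound would yield a contradiction. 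The paper's argument is structural instead: it first observes that every $Z\in a(e^XKe^X)$ satisfies $\mathcal{D}_Z=\diag[c,c,d,d]$ (this comes from the decomposition of $K$ in Corollary~\ref{decompo} and the fact that $e^X$ commutes with all $\diag[A,A]$), and then invokes the computation at the end of Proposition~\ref{espess} to conclude that every $H\in a(e^ZKe^X)\subset a(e^XKe^XKe^X)$ lies in the hyperplane $H_1+H_3=H_2+H_4$.

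\textbf{The positive part.} Your claimed element $X'$ with $\mathcal{D}_{X'}=\diag[x,a,a,a]$, $x>a$, is \emph{not} in $a(e^XKe^X)$: as just noted, every such Cartan projection has paired entries $\diag[c,c,d,d]$, so a configuration $[1,3]$ is impossible here. The paper instead takes $Z$ with $\mathcal{D}_Z=\diag[c,c,d,d]$, $c>d>0$; the pair $(Z,Z)$ has $\max\mathbf{s}+\max\mathbf{t}=4\le 2p-2=6$ and avoids (\ref{except4}), hence is eligible, so $a(e^ZKe^Z)\subset a(e^XKe^XKe^XKe^X)$ has nonempty interior. Your ``Consequently'' paragraph also overreaches: for instance $X$ with $\mathcal{D}_X=\diag[a,0,0,0]$ has $u=3$, and $(X,X)$ fails (\ref{cas2}) since $\max(\mathbf{s},2u)+\max(\mathbf{t},2v)=6+6>8$, so eligibility of $(X,X)$ does not follow from $\max\mathbf{s}\le3$ alone.
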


\begin{proof}
We know that the elements of $e^X\,K\,e^X$ have the form $k_a\,e^Z\,k_b$ where $\mathcal{D}_Z=\diag[c,c,d,d]$, $c\geq d$.  From the end of the proof of Proposition \ref{espess}, we know that for all $H\in  a(e^X\,K\,e^X\,K\,e^X)$,  $\mathcal{D}_H=\diag[H_1,H_2,H_3,H_4]$ will satisfy 
$H_1+H_3=H_2+H_4$.  We conclude therefore that $a(e^X\,K\,e^X\,K\,e^X)$ has empty interior.  On the other hand, since there exists $Z\in a(e^X\,K\,e^X)$ with $\mathcal{D}_Z=\diag[c,c,d,d]$ with $c>d>0$ and $(Z,Z)$ forms an eligible pair, it follows that $a(e^X\,K\,e^X\,K\,e^X\,K\,e^X)$ has nonempty interior since it contains $a(e^Z\,K\,e^Z)$. 
\end{proof}

\begin{prop}
If $p\geq 5$ and $\mathcal{D}_X=\diag[a,\dots, a]$, $a>0$, then $(\delta_{e^X}^\natural)^3$ is absolutely continuous in $\SO(p,p)/\SO(p)\times \SO(p)$. Consequently, if $X=X[{\bf s};u]$ with $u\geq 1$ or $X=X[{\bf s}]^-$ then $(\delta_{e^X}^\natural)^3$ is absolutely continuous.
\end{prop}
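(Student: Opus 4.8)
The plan is to prove the statement first for $\mathcal D_X=\diag[a,\dots,a]$ and then to reduce the ``consequently'' part to that case. \emph{Case $\mathcal D_X=\diag[a,\dots,a]$.} First I would exhibit a convenient element of $a(e^X\,K\,e^X)$. Since $\mathcal D_X=a\,I_p$ we have, in the usual $2\times 2$ block form, $e^X=\left[\begin{array}{cc}\cosh a\,I_p&\sinh a\,I_p\\ \sinh a\,I_p&\cosh a\,I_p\end{array}\right]$. Take $k_0=\left[\begin{array}{cc}R&0\\ 0&I_p\end{array}\right]\in K$ with $R=\diag[-1,-1,1,\dots,1]\in\SO(p)$ (rotation by $\pi$ in the $(1,2)$--plane) and compute $g:=e^X\,k_0\,e^X$ blockwise: on the four coordinates $\{1,2,p+1,p+2\}$ the map $g$ acts as the diagonal matrix $\diag[-1,-1,1,1]$ (singular values $1,1,1,1$), while on the remaining $2(p-2)$ coordinates it acts as $\exp\!\left(2\left[\begin{array}{cc}0&a\,I_{p-2}\\ a\,I_{p-2}&0\end{array}\right]\right)$, contributing $e^{2a}$ and $e^{-2a}$ each with multiplicity $p-2$. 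Hence, by Remark \ref{S}, $Z:=a(g)\in a(e^X\,K\,e^X)$ has $\mathcal D_Z=\diag[\,\overbrace{2a,\dots,2a}^{p-2},0,0\,]$, i.e.\ $Z$ has configuration $[(p-2);2]$.

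Next I would check eligibility. Here $X$ has configuration $[(p)]$ (so $u=0$) and $Z$ has configuration $[(p-2);2]$ (so $v=2$), and Definition \ref{defEligible}, case (\ref{cas2}), demands $\max({\bf s},2u)+\max({\bf t},2v)=p+\max(p-2,4)\le 2p$; this holds for all $p\ge5$ (it reads $5+4\le10$ for $p=5$ and $2p-2\le2p$ for $p\ge6$), and the anomalous case (\ref{except4}) is irrelevant because $p\ge5$. So $X$ and $Z$ are eligible, and Theorem A gives that $\delta_{e^X}^\natural\star\delta_{e^Z}^\natural$ is absolutely continuous, i.e.\ $K\,e^X\,K\,e^Z\,K$ has nonempty interior. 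Since $e^Z\in K\,e^X\,K\,e^X\,K$, we have $K\,e^X\,K\,e^Z\,K\subset K\,e^X\,K\,e^X\,K\,e^X\,K$, so the support of $(\delta_{e^X}^\natural)^{3}$ has nonempty interior; by Proposition \ref{KKK}(i) the measure $(\delta_{e^X}^\natural)^{3}$ is absolutely continuous.

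\emph{The ``consequently''.} The proof of Lemma \ref{cousin} applies verbatim to the triple product $K\,e^X\,K\,e^X\,K\,e^X\,K$, so by Lemma \ref{relatives}(5) and Proposition \ref{KKK}(i) I may assume $\mathcal D_X$ has no negative entry; this replaces a configuration $[{\bf s}]^-$ by $[{\bf s}]$ with $u=0$, and the single case $[(p)]^-$ (a relative of $\diag[a,\dots,a]$) is already settled. If $\mathcal D_X=\diag[a,\dots,a]$ we are done. Otherwise I would observe that the pair $(X,X)$ is itself eligible: for $u=1$ one has $\max{\bf s}\le p-1$ automatically and (\ref{cas1}) holds; for $2\le u\le p/2$ one has $\max({\bf s},2u)\le p$ and (\ref{cas2}) holds; for $X=X[{\bf s}]^-$ with $\max{\bf s}\le p-1$ again (\ref{cas1}) holds; and (\ref{except4}) never applies since the two configurations coincide. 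By Theorem A, $(\delta_{e^X}^\natural)^{*2}$ is then absolutely continuous, and therefore so is $(\delta_{e^X}^\natural)^{*3}=(\delta_{e^X}^\natural)^{*2}\star\delta_{e^X}^\natural$.

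The main obstacle is the residual range $u\ge2$ with $u>p/2$, i.e.\ $\mathcal D_X$ with very few nonzero entries: a singular-value estimate shows that every $Z\in a(e^X\,K\,e^X)$ then has at least $2u-p$ zero diagonal entries, so for $u$ large no $Z\in a(e^X\,K\,e^X)$ can form an eligible pair with $X$, the reduction through the twofold convolution breaks down, and absolute continuity of $(\delta_{e^X}^\natural)^{3}$ has to be read off directly from the surjectivity of $dT$ in Proposition \ref{KKK}(ii) for $T(k_1,k_2,k_3,k_4)=k_1e^Xk_2e^Xk_3e^Xk_4$, with the sharp admissible degeneracy being the point requiring real work. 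By contrast, verifying the singular values of $g=e^X\,k_0\,e^X$ and checking clause (\ref{cas2}) in the first part are entirely routine.
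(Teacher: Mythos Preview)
Your argument for the first assertion is correct and close in spirit to the paper's, but with a different choice of $k$. You take $k_0=\left[\begin{smallmatrix}R&0\\0&I_p\end{smallmatrix}\right]$ with $R=\diag[-1,-1,1,\dots,1]$ and obtain $Z\in a(e^X K e^X)$ with $\mathcal D_Z=\diag[2a,\dots,2a,0,0]$, configuration $[(p-2);2]$; eligibility of $(X,Z)$ then goes through clause (\ref{cas2}) of Definition \ref{defEligible}. The paper instead takes $k=k_t^{Z^+_{p-1,p}}$ for small $t>0$, producing $\mathcal D_Z=\diag[a,\dots,a,x,x]$ with $a>x>0$, configuration $[(p-2,2)]$, and checks eligibility via clause (\ref{cas1}). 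Both routes give an eligible pair exactly for $p\ge5$; yours has the small advantage that the singular values are computed exactly rather than perturbatively.

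For the ``consequently'' you correctly notice that the reduction through eligibility of $(X,X)$ breaks down when $u>p/2$, but you then assume that the assertion still holds in that range and that only the method is lacking. In fact the assertion as printed is \emph{false} there: for $X=X[1;p-1]$ Theorem \ref{*power} states that $(\delta_{e^X}^\natural)^{p-1}$ is singular, so for $p\ge 5$ the cube $(\delta_{e^X}^\natural)^3$ is certainly singular. The condition ``$u\ge1$'' in the statement is a misprint for ``$u\le1$''. With that correction your argument is already complete and the paragraph you flag as the ``main obstacle'' disappears: when $u\le1$ (or $X=X[{\bf s}]^-$) and $X$ is not of configuration $[p]$ or $[p]^-$, one has $\max{\bf s}\le p-1$, so $(X,X)$ is eligible by (\ref{cas1}) and $(\delta_{e^X}^\natural)^{*2}$ is absolutely continuous. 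The paper's own proof, incidentally, only treats the first assertion and leaves the ``consequently'' implicit.
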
 

\begin{proof}
Note that for $t>0$ small enough, $Z=a(e^X\,k_t^{Z_{p-1,1}^+}\,e^X)\in a(e^X\,K\,e^X)$ is such that $\mathcal{D}_Z=\diag[\overbrace{a,\dots,a}^{p-2},x,x]$ with $a>x>0$.  Given that $(Z,X)$ form an eligible pair and that $a(e^Z\,K\,e^X)\subset a(e^X\,K\,e^X\,K\,e^X)$, the result follows.
\end{proof}

\begin{theorem}\label{*power}
On symmetric spaces $\SO_0(p,p)/\SO(p)\times\SO(p)$, ($p\geq 4$), $\SU(p,p)/{\bf S}({\bf
U}(p)\times{\bf U}(p))$ and $\Sp(p,p)/\Sp(p)\times\Sp(p)$, $p\geq 2$,
for every nonzero $X\in\a$, the measure $(\delta_{e^X}^\natural)^p$ is absolutely
continuous. Moreover, $p$ is the smallest value for which this is
true: if $X$ has a configuration $[1;p-1]$ then the measure $(\delta_{e^X}^\natural)^{p-1}$
is singular.
\end{theorem}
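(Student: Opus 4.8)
**Proof plan for Theorem \ref{*power}.**

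The plan is to split the statement into three assertions and handle each separately. First, the positive result for $\SO_0(p,p)/\SO(p)\times\SO(p)$ with $p\ge 4$: that $(\delta_{e^X}^\natural)^p$ is absolutely continuous for every nonzero $X$. The idea is to use the iterated Cartan projection together with the preceding propositions. By the analysis of powers established just above (the results on $(\delta_{e^X}^\natural)^3$ and $(\delta_{e^X}^\natural)^4$), it suffices to produce, after a few convolution steps, an intermediate element $Z$ whose configuration is eligible when paired with some later element. Concretely, for any nonzero $X$ one first shows by a small $k_t^{Z_{i,j}^+}$-perturbation argument (as in the proof of the last Proposition) that $a(e^X K e^X)$ contains an element $Z$ whose configuration is strictly finer than that of $X$, i.e.\ $\max({\bf s})$ strictly decreases or a pair of equal entries is split; iterating, after at most $p-2$ further multiplications by $e^X$ the running Cartan projection reaches a regular (or eligible-with-$X$) element, and then Theorem \ref{existenceNOZEROS} (or Corollary \ref{VXVY}) gives an open set in the support. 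One must bookkeep the worst case $\mathcal{D}_X=\diag[a,\dots,a]$, which is exactly what the Propositions preceding the theorem settle for $p=3,4$ and $p\ge5$; so the real content is to check that $p$ steps always suffice when $p\ge4$, using those base cases.

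Second, for the complex and quaternionic cases ($p\ge 2$): here the root system is $C_p$ and, by Theorem \ref{iff}, a pair is eligible iff $\max({\bf s},2u)+\max({\bf t},2v)\le 2p$. For a nonzero $X$, the worst configuration is again $\mathcal{D}_X=\diag[a,\dots,a]$ (configuration $[p]$), and $[p]$ paired with $[p]$ gives $p+p=2p\le 2p$, so it is already eligible; hence $(\delta_{e^X}^\natural)^2$ is absolutely continuous, a fortiori $(\delta_{e^X}^\natural)^p$ for $p\ge2$. If the configuration of $X$ is $[{\bf s};u]$ with $u\ge1$, one observes $\max({\bf s},2u)\le 2(p-1)<2p$ unless $u=p$, i.e.\ $X=0$; so again $(X,X)$ is eligible. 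Thus in the $C_p$ cases the statement is immediate from Theorem \ref{iff}, with no induction needed.

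Third, the optimality: if $X$ has configuration $[1;p-1]$, i.e.\ $\mathcal{D}_X=\diag[a,0,\dots,0]$ with $a\ne0$, then $(\delta_{e^X}^\natural)^{p-1}$ is singular. The approach is to apply Lemma \ref{repeat} to the iterated product $e^X K e^X K\cdots e^X$ ($p-1$ factors of $e^X$) after conjugating by $S$ as in Remark \ref{S} and Proposition \ref{ness}, working in $\SL(2p,\F)$. Each factor $S^T e^X S$ is a diagonal matrix with exactly two nonzero logarithms ($\pm a$) and $2p-2$ zero entries; multiplying $p-1$ such factors with orthogonal matrices in between, Lemma \ref{repeat} forces the singular values of any product to have a large block of repetitions of $0$ on the diagonal — specifically the count $r-s$ computed there stays positive for $p-1$ factors — so every $H\in a(e^X K e^X K\cdots K e^X)$ has a repeated diagonal entry $0$ in $\mathcal{D}_H$, whence the support has empty interior and the measure is singular by Proposition \ref{KKK}(i).

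The main obstacle I expect is the bookkeeping in the first part: making precise the claim that one perturbation step strictly refines the configuration, controlling what ``strictly refines'' means (it is not literally the finer-than partial order, since entries change value), and verifying that the number of steps needed, starting from the worst case and from each intermediate configuration, never exceeds $p$ when $p\ge4$ — this is where the low-dimensional anomalies (the exceptional pair at $p=4$ in \eqref{except4}) could in principle interfere and must be checked to not do so.
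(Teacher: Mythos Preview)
Your argument in the second part (the $C_p$ cases) contains a genuine error that directly contradicts the theorem you are trying to prove. You write that for $X=X[{\bf s};u]$ with $u\ge1$ one has $\max({\bf s},2u)\le 2(p-1)$ and conclude ``so again $(X,X)$ is eligible''. But eligibility in Theorem~\ref{iff} is the condition $\max({\bf s},2u)+\max({\bf t},2v)\le 2p$, which for $X=Y$ reads $2\max({\bf s},2u)\le 2p$, i.e.\ $\max({\bf s},2u)\le p$. Your bound $2(p-1)$ is useless here. Concretely, for $X=X[1;p-1]$ one has $\max({\bf s},2u)=2(p-1)$, and $2(p-1)\le p$ fails as soon as $p\ge3$; so $(X,X)$ is \emph{not} eligible and $(\delta_{e^X}^\natural)^2$ is singular. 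Indeed, the optimality clause of the theorem itself says $(\delta_{e^X}^\natural)^{p-1}$ is singular for this $X$, so your claim that the square is absolutely continuous for every nonzero $X$ cannot be right for $p\ge3$. The $C_p$ cases therefore do require an inductive argument tracking how many zeros survive under repeated convolution, exactly as in the real case; the paper handles this by invoking \cite[Theorem 5.3]{PGPS_2013}.

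Your third part has the right idea but is imprecise. Lemma~\ref{repeat} as stated concerns a single product $e^U k e^V$; to handle $p-1$ factors you need an induction: if $Z\in a(e^X K\cdots K e^X)$ (with $l$ factors) has at least $p-l$ zeros in $\mathcal{D}_Z$, then applying Lemma~\ref{repeat} to $e^Z k e^X$ with $r=2(p-1)$ and $N-s=2(p-l)$ yields at least $r-s=2(p-l-1)$ repetitions of $0$, hence at least $p-l-1$ zeros in the next Cartan projection. With $l=p-1$ factors this leaves at least one forced zero, so the support lies in a wall of $\overline{\a^+}$ and has empty interior. You should spell this out; the paper again defers to \cite[Theorem 5.3]{PGPS_2013}. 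Your first part is a reasonable sketch but, as you yourself note, the bookkeeping is not done; the paper bypasses it by citing the preceding propositions together with the same external reference.
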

\begin{proof}
We used the three preceding propositions and \cite[Theorem 5.3]{PGPS_2013}.
\end{proof}

\section{Conclusion}
With this paper and with \cite{PGPS_Lie2010, PGPS_2013}, we have now obtained sharp criteria on singular $X$ and $Y$ for the existence of the density of $\delta_{e^X}^\natural \star
\delta_{e^Y}^\natural$ for the root systems of types $A_n$, $B_p$, $C_p$, $D_p$ and $E_6$. Thanks to \cite{PGPS_Fun2010, PGPS_2013} and Theorem \ref{*power} of the present paper, sharp criteria
are now given for the $l$-th convolution powers $ (\delta_{e^X}^\natural)^l$ to be absolutely continuous for any $X\not=0$, $X\in\a$. 
It is interesting to note that the eligibility criterion  depends strongly on the geometry of the root system. Consequently, 
 a characterization of eligibilty  that would be applicable for all Riemannian symmetric 
spaces of non-compact type is unlikely to exist. 


\begin{thebibliography}{99}
\bibitem{Anker} J.-P. Anker, {\em Le noyau de la chaleur sur les espaces sym\'etriques ${\bf U}(p,q)/{\bf U}(p)\times {\bf U}(q)$\/},
Lectures Notes in Mathematics 1359, (Springer-Verlag, New-York), 1988,
60-82.

\bibitem{Friedberg} Friedberg, Insel, Spence. Linear Algebra. Prentice-Hall, 1997. 


\bibitem{PGPS2} P.\ Graczyk, P.\ Sawyer,
{\em The product formula for the spherical functions on symmetric
spaces of noncompact type\/}, J. Lie Theory 13 (2003),
247--261.

\bibitem{PGPS_Lie2010} P. Graczyk, P. Sawyer,
{\em A sharp criterion for the existence of the density in the
product formula on symmetric spaces of Type $A_n$},
J. Lie Theory 20 (2010), 751-766. 

\bibitem{PGPS_Fun2010} P. Graczyk, P. Sawyer, 
{\em Absolute continuity of
convolutions of orbital measures on Riemannian symmetric spaces},
J. Funct. Anal. 259 (2010), 1759-1770.

\bibitem{PGPS_2013} P. Graczyk, P. Sawyer, 
{\em On the product formula on non-compact Grassmannians},
Colloq.\ Math.\ 133 (2013), 145-167 

\bibitem{Helga0} S. Helgason, {\em Differential geometry, Lie
groups, and symmetric spaces\/}, Academic Press, 1978.

\bibitem{Helga} S. Helgason, {\em Groups and geometric
analysis\/}, Academic Press, 1984.

\bibitem{Sawyer1} P. Sawyer, {\em Spherical functions on
$\SO_0(p,q)/\SO(p)\times \SO(q),$}
Canad. Math. Bull. 42 (1999), no. 4, 486-498. 

\end{thebibliography}
\end{document}